\documentclass[11pt,reqno,a4paper]{amsart} 

\makeatletter
\def\@tocline#1#2#3#4#5#6#7{\relax
	\ifnum #1>\c@tocdepth 
	\else
	\par \addpenalty\@secpenalty\addvspace{#2}%
	\begingroup \hyphenpenalty\@M
	\@ifempty{#4}{%
		\@tempdima\csname r@tocindent\number#1\endcsname\relax
	}{%
		\@tempdima#4\relax
	}%
	\parindent\z@ \leftskip#3\relax \advance\leftskip\@tempdima\relax
	\rightskip\@pnumwidth plus4em \parfillskip-\@pnumwidth
	#5\leavevmode\hskip-\@tempdima
	\ifcase #1
	\or\or \hskip 1em \or \hskip 2em \else \hskip 3em \fi%
	#6\nobreak\relax
	\dotfill\hbox to\@pnumwidth{\@tocpagenum{#7}}\par
	\nobreak
	\endgroup
	\fi}
\makeatother

\usepackage[utf8]{inputenc}
\usepackage{amsmath}
\usepackage{amsthm}
\usepackage{amssymb}
\usepackage{euscript}
\usepackage{mathrsfs}
\usepackage{wasysym}
\usepackage[all]{xy}
\usepackage{graphicx}
\usepackage{color}
\usepackage{multirow}
\usepackage{pifont}
\usepackage[table]{xcolor}
\usepackage{tikz-cd}
\usepackage{enumitem}
\usepackage{xcolor}
\usepackage{verbatim}
\usepackage{tikz-cd}
\usepackage[hidelinks]{hyperref}


\usepackage[babel]{csquotes}        
\usepackage[maxnames=99,style=alphabetic,backend=bibtex]{biblatex}
\AtBeginBibliography{\scriptsize}
\addbibresource{bibliography-2.bib}

\usepackage{academicons}			
\def\orcid#1{\kern .08em\href{https://orcid.org/#1}{\includegraphics[keepaspectratio,width=0.7em]{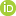}}}

\hoffset=-15mm
\voffset=-10mm
\setlength{\textwidth}{16cm}
\setlength{\textheight}{24.5cm}
\raggedbottom

\parskip=0.9ex


\usepackage{chngcntr}
\counterwithout*{equation}{section}
\counterwithin{equation}{section}

\numberwithin{figure}{section}


\newtheorem*{mthm}{Main Theorem}
\newtheorem{theorem}{Theorem}[section]
\newtheorem{corollary}[theorem]{Corollary}
\newtheorem{lemma}[theorem]{Lemma}
\newtheorem{proposition}[theorem]{Proposition}

\newtheorem{notation}[theorem]{Notation}
\newtheorem{definition}[theorem]{Definition}

\theoremstyle{definition}

\newtheorem{example}[theorem]{Example}

\newtheorem{remark}[theorem]{Remark}


 \newcommand{\N}{{\mathbb N}}
\newcommand{\Z}{{\mathbb Z}} \newcommand{\R}{{\mathbb R}}
\newcommand{\Q}{{\mathbb Q}} \newcommand{\C}{{\mathbb C}}
\newcommand{\A}{{\mathbb A}} 
\newcommand{\G}{{\mathbb G}} 
\newcommand{\sph}{{\mathbb S}} 
\newcommand{\E}{{\mathbb E}} \newcommand{\Proy}{{\mathbb P}}


 
\newcommand{\reg}{{\mathcal R}} 
 
 \newcommand{\I}{{\mathcal I}}



\newcommand{\Ii}{\mc{I}}

\newcommand{\Nn}{{\EuScript N}}


\newcommand{\im}{\operatorname{im}}
\newcommand{\Reg}{\operatorname{Reg}}

\newcommand{\Int}{\operatorname{Int}}

\newcommand{\id}{\operatorname{id}}

\newcommand{\Zcl}{\textnormal{Zcl}}


\newcommand{\w}{\mr{w}}


\newcommand{\mc}{\mathcal}
\newcommand{\mr}{\mathrm}

\newcommand{\mscr}{\mathscr}



\newlist{mydescription}{description}{1}
\setlist[mydescription]{font=\normalfont\normalcolor\textsc, before*={\normalfont}}




\begin{document}
	\title[]{A relative Nash-Tognoli theorem over $\mathbb{Q}$ and application to the $\mathbb{Q}$-algebraicity problem}

\author{Enrico Savi }
\address{Laboratoire J. A. Dieudonné, Université Côte d'Azur, Parc Valrose, 28 Avenue Valrose, 06108 Nice (FRANCE).}
\email{enrico.savi@univ-cotedazur.fr, enrico.savi@univ-angers.fr}
\thanks{The author is supported by GNSAGA of INDAM}

\begin{abstract}
	We prove a relative version over $\mathbb{Q}$ of Nash-Tognoli theorem, that is: Let $M$ be a compact smooth manifold with closed smooth submanifolds $M_1,\dots,M_\ell$ in general position, then there exists a nonsingular real algebraic set $M'\subset\mathbb{R}^n$ with nonsingular algebraic subsets $M_1',\dots,M_\ell'$ and a diffeomorphism $h:M\to M'$ such that $h(M_i)=M_i'$ for all $i=1,\dots,\ell$ such that $M',M_1',\dots,M_\ell'$ are described, both globally and locally, by polynomial equations with rational coefficients. In addition, if $M,M_1,\dots,M_\ell$ are nonsingular algebraic sets, then we prove the diffeomorphism $h:M\to M'$ can be chosen semialgebraic and the result can be extended to the noncompact case. In the proof we describe also the $\mathbb{Z}/2\mathbb{Z}$-homological cycles of real embedded Grassmannian manifolds by nonsingular algebraic representatives over $\mathbb{Q}$ via the Bott-Samelson resolution of Schubert varieties.
\end{abstract}


\keywords{Real algebraic sets, topology of real algebraic sets, algebraic models, Nash manifolds, resolution of singularities, Schubert varieties.}
\subjclass[2010]{Primary 14P05, 14P25; Secondary 14P20, 57R99, 14E15, 14M15.} 
\date{\today}

\maketitle 


\tableofcontents


\section*{Introduction}

\vspace{0.5em}



\subsection*{General introduction}
One of the main topics in real algebraic geometry is the ``Algebraicity problem", that is, the characterization of those topological spaces admitting an algebraic structure. By Whitney embedding theorem \cite{Whi36}, every smooth manifold $M$ of dimension $d$ can be smoothly embedded in $\R^{2d+1}$ and the image $M'\subset\R^{2d+1}$ of such embedding can be described, both globally and locally, as the solution set of finitely many global real analytic equations. More in detail, the above global and local description means that $M'$ can be described as the common solution set of finitely many global real analytic functions and, locally at any point $p\in M'$, $M'$ coincides with the common solution set of $d+1$ analytic functions vanishing on $M'$ whose gradients are linearly independent at $p$. Thus, the further task to address was whether the previous global analytic equations could be produced algebraic, both globally and locally. Clearly, there are examples of non-compact manifolds that are not homeomorphic to any real algebraic set, such as any manifold with infinitely many connected components. We remark that here real algebraic set means the common solution set of real polynomial equations in some real affine space. In contrast, the algebraicity problem for compact smooth manifolds was a challenging task to address.

In his groundbreaking paper \cite{Na52}, Nash proved that every compact smooth manifold $M$ of dimension $d$ is diffeomorphic to a real analytic submanifold $M'\subset\R^{2d+1}$ which is actually the union of some nonsingular connected components of a real algebraic subset of $\R^{2d+1}$. Hence, Nash conjectured that $M'$ could be chosen to be a whole nonsingular real algebraic subset of $\R^{2d+1}$, a so-called \emph{algebraic model} of $M$. In 1973, Tognoli \cite{Tog73} proved this conjecture to be true, thus the Nash-Tognoli theorem asserts that: \textit{Every compact manifold $M$  of dimension $d$ is diffeomorphic to a nonsingular real algebraic subset of $\R^{2d+1}$.} With respect to Nash's result two main original ideas appear in Tognoli's work: relative algebraic approximation of smooth functions by polynomial (and regular) functions and cobordism theory, in particular the algebraic representatives of cobordism classes produced by Milnor in \cite{Mil65}.

After Tognoli's solution of Nash's conjecture a systematic study of real algebraic sets and of the algebraicity problem started. There is a wide literature devoted to improvements and extensions of Nash-Tognoli theorem. We refer the interested reader to the books \cite[\S II]{AK92a}, \cite[\S 14]{BCR98}, \cite[\S 6]{Man14}, the survey \cite[\S 2]{Kol17}, the papers \cite{AK81b,AK81a,CS92,BK89,Kuc11}, the more recent ones \cite{Ben1,GT17} and references therein. 

\subsection*{The $\Q$-algebraicity problem} 

At this point, both for applications and for theoretical reasons, a very interesting problem to investigate is the existence of models of smooth compact manifolds, or even models of singular sets, that can be described by polynomial equations with rational coefficients. It is evident how relevant it would be to succeed in this project for algorithmic algebraic geometry: a systematic and efficient description on $\Q$ of algebraic sets, both non-singular and singular, would allow exact calculations. In particular, for the purposes of applications, the description over $\Q$ of topological models of algebraic sets in low dimensions would be of particular interest in this project.  On the other hand, the existence of normal forms over $\Q$ (i.e. over $\Z$) up to homeomorphism for real algebraic sets allows the reduction modulo $p$ with potential applications of techniques from arithmetic geometry.

Let us first focus on describing models of compact smooth manifolds and real algebraic sets over $k$, where $k$ denotes a subfield of $\R$. In particular, the case $k=\Q$ corresponds to the previous problem and constitutes the hardest task to achieve since $\Q$ is the smallest subfield of $\R$. Denote by $\R_{\mr{alg}}$ the field of real algebraic numbers, or in other words, the real closure of $\Q$.

For compact smooth manifolds, above problem for $k=\R_{\mr{alg}}$ has an affirmative answer. This follows combining Nash's theorem and the algebraicity result \cite[Corollary\,3.9]{CS92} for Nash manifolds over any real closed field by Coste and Shiota. For $k=\Q$ a positive answer is given by Ghiloni and the author in \cite[Theorem\,1.7]{GSa}. Previously, Ballico and Tognoli \cite[Theorem\,0.1]{BT92} stated a result which is even stronger but, unfortunately, the proof of this theorem is not complete, so the result is not guaranteed to be true. In Remark \ref{rem:Q-tognoli} we outline why some arguments of the classical Nash-Tognoli theorem do not directly extend to obtain polynomial equations over $\Q$. In particular, we stress that several subtleties arise when looking for equations over $\Q$ so a different notion of regularity over $\mathbb{Q}$ (see Definition \ref{def:R|Q}) with respect to the one proposed in \cite[Definition\,3,\,p.\,30]{Tog78} and used in \cite{BT92} is required to fill the gaps in the proof. In particular, the theory of subfield-algebraic geometry recently developed by Fernando and Ghiloni \cite{FG} plays an essential role to clarify the basic concepts involved and to produce useful tools for our purposes. However, we recognize to Ballico and Tognoli the original idea of extending the algebraic approximation techniques to obtain nonsingular equations with coefficients over $\Q$ for the algebraic models of compact smooth manifolds.

A similar topological description over $k=\R_{\mr{alg}}$ holds for singular algebraic sets as well. 
Indeed, by means of Zariski equisingular techniques, Parusi\'{n}ski and Rond \cite{PR20} proved that every real algebraic subset $V\subset\R^{n}$ can be deformed by an strongly equisingular semialgebraic and arc-wise analytic deformation $h_t:\R^n\to \R^n$ to an algebraic subset $V'\subset\R^n$ defined over $\R_{\textnormal{alg}}$. Remarkably, no regularity assumptions on the algebraic set $V$ are necessary. In addition, the polynomial equations describing the above model $V'$ can be chosen to have coefficients in $\Q$ if the extension of $\Q$ obtained by adding the coefficients of the polynomial equations defining $V$ is purely transcendental, see \cite[Remark\,13]{PR20}. However, in general, the fact that $\Q$ is not a real closed field is a crucial difficulty. Indeed, the following open problem proposed by Parusi\'{n}ski \cite{Par21} and motivated by Teissier \cite{Tei90} is widely open:

\vspace{.7em}

\noindent\textsc{$\Q$-algebraicity problem.}(\cite[Open\,problem\,1, p.\,199]{Par21}) \textit{Is every real algebraic set $V$ homeomorphic to a real algebraic set $V'$ defined by polynomial equations with rational coefficients?}

\vspace{.7em}



We outline that both the approaches by Coste-Shiota and Parusi\'{n}ski-Rond make use of the model completeness of the theory of real closed fields. That is the reason why their techniques can not be extended to rational numbers, indeed $\Q$ is not a real closed field and the theory of ordered fields is not model complete. Hence, the unique approach which seems to work at the moment in investigating the \textsc{$\Q$-algebraicity problem} is the one proposed by Ghiloni and the author in \cite{GSa} and which is further developed in the present paper.

In order to investigate latter open problem, as already mentioned, classical notions in algebraic geometry must be very carefully adapted when looking for descriptions over $\Q$. In the brand new paper \cite{FG}, Fernando and Ghiloni introduced and studied $\Q$-algebraic subsets of $\R^n$, that is, real algebraic sets globally described by polynomial equations with rational coefficients. 
Let $V\subset\R^n$ be a $\Q$-algebraic set. Denote by $\overline{\Q}:=\R_{\mr{alg}}[i]$ the algebraic closure of $\Q$. Denote by $V_\C\subset\C^n$ the complexification of $V$ and choose some equations over $\R_{\mr{alg}}$ defining $V_\C$. Let $E|\Q$ be a finite Galois subextension of $\overline{\Q}|\Q$ containing all the coefficients of the chosen equations defining $V_\C$. By means of the action of the Galois group $\textnormal{Gal}(E|\Q)$ on $V_\C$, they proved fundamental properties of real $\Q$-algebraic sets and defined a precise notion of $\R|\Q$-regularity for a point of a real $\Q$-algebraic set (see Definition \ref{def:R|Q}). Latter notion of regularity over $\Q$ for a point on a real $\Q$-algebraic set turned out to be crucial to extend the algebraic approximation techniques developed in \cite{Na52}, \cite{Tog73} and \cite{AK81a}. Indeed, in \cite{GSa} Ghiloni and the author developed such $\Q$-algebraic approximation techniques and gave a complete solution of above \textsc{$\Q$-algebraicity problem} in the case of nonsingular real algebraic sets and real algebraic sets with isolated singularities. As already mentioned, in \cite[Theorem\,1.7]{GSa} Ghiloni and the author proved, as a special case, the following version over $\Q$ of Nash-Tognoli theorem: \textit{Every compact smooth manifold $M$  of dimension $d$ is diffeomorphic to a $\Q$-nonsingular real $\Q$-algebraic subset of $\R^{2d+1}$}.

In this paper we extend latter version over $\Q$ of Nash-Tognoli theorem to a relative setting (see Theorem \ref{thm:Q_tico_tognoli}) providing a general answer to a relative version of \cite[Open\,problem\,1, p.\,199]{Par21} in the nonsingular case, that is, to positively answer the following question.

\vspace{4em}

\noindent\textsc{Relative $\Q$-algebraicity problem.} \textit{Is every nonsingular real algebraic set $V$, with nonsingular algebraic subsets $\{V_i\}_{i=1}^\ell$, in general position, diffeomorphic to a nonsingular algebraic set $V'$, with nonsingular algebraic subsets $\{V'_i\}_{i=1}^\ell$, in general position, all defined by polynomial equations with rational coefficients such that the diffeomorphism sends each $V_i$ to $V'_i$?}

\vspace{.7em}

\subsection*{Further developments and open problems}

Our Main Theorem below, which positively answers above \textsc{Relative $\Q$-algebraicity problem}, is very useful in applications. Let us outline the two main directions for applications.

\vspace{.5em}

\noindent\textsc{$\Q$-algebraicity problem in low dimensions:} Our Main Theorem below is expected to be deeply used to provide a complete positive answer to the \textsc{$\Q$-algebraicity problem} in low dimensions. The 1-dimensional case is just a consequence of \cite[Theorem\,1.12]{GSa}, in dimension $2$ and $3$ the problem is much more challenging. We refer to \cite{BD81,AK92a} for more details on the topological characterization of polyhedra of dimension up to $3$ admitting an algebraic structure and to \cite{CP00} for necessary conditions on local invariants in all dimensions.

\vspace{.5em}

\noindent\textsc{$\Q$-algebraic homology of manifolds:} It was shown by Thom \cite{Tho54} that not every $\Z/2\Z$-homology class of compact smooth manifolds can be represented by a closed smooth submanifold, but it can be represented by the fundamental class of a compact smooth manifold by a smooth mapping. As a consequence, Kucharz \cite{Kuc05} showed that any $\Z/2\Z$-homology class of a compact Nash manifold can be represented by a semialgebraic arc-symmetric subset. On the other hand, there are many homology classes that can not be represented by algebraic subsets (even singular), see \cite{BD84,AK85,BK89,Ben1}. It is interesting then to see whether those classes that can be represented by real algebraic subsets can be realized by $\Q$-algebraic subsets.

\subsection*{Structure of the paper \& Main Theorem} Let us outline the structure of the paper:

Section \ref{sec:1} is devoted to review the fundamental results of $\Q$-algebraic geometry developed in \cite{FG,GSa}. We recall the notions of $\Q$-algebraic set, the decomposition in $\Q$-irreducible components and the notion of $\R|\Q$-regularity of a point in a $\Q$-algebraic set $V\subset\R^n$ (see Definition \ref{def:R|Q_p}). This notion turned out to be crucial in separating the irreducible components of $\Q$-algebraic sets (see Proposition \ref{cor:Q_setminus}) and to develop $\Q$-algebraic approximation techniques in \cite[\S 3]{GSa}. In the last part of this first section we propose fundamental examples of $\Q$-nonsingular $\Q$-algebraic sets (see Definition \ref{def:R|Q}) that became crucial in Subsection \ref{subsec:3.1}.

Section \ref{sec:2} is devoted to the study of $\Z/2\Z$-homology groups of real Grassmannians. It is well known that each real Grassmannian $\G_{m,n}$ of affine $m$-planes in $\R^{m+n}$ can be embedded as an algebraic subset of $\R^{(m+n)^2}$, see \cite[Theorem\,3.4.4]{BCR98}. Let us identify $\G_{m,n}$ with the above algebraic subset of $\R^{(m+n)^2}$. It is also well known that incidence conditions with respect to a complete flag induce a finite cellular decomposition of $\G_{m,n}$ such that the Euclidean closure of each cell is an algebraic subset of $\G_{m,n}$, see \cite{MS74}. The closures of these cells are called Schubert varieties associated to the complete flag and they generate the $\Z/2\Z$-homology groups of each $\G_{m,n}$. The main result of this section is that, if we choose the complete flag $0\subset\R\subset\R^2\subset\dots\subset\R^{m+n}$ with respect to the standard coordinates of $\R^{m+n}$, each Schubert variety $\sigma_\lambda$ defined by incidence conditions, prescribed by a partition $\lambda$, with respect to the above complete flag admits a $\Q$-desingularization (see Definition \ref{def:Q_des}). Latter result ensures that each $\Z/2\Z$-homology class of each real Grassmannian $\G_{m,n}$ has a $\Q$-nonsingular $\Q$-algebraic representative. This is a key property for the construction of $\Q$-algebraic relative bordisms in Section \ref{sec:3}. 
We point out that very general algorithms in resolution of singularities in characteristic $0$ provide a resolution which is functorial and invariant under field extension, see as references \cite[Section\,5.7]{Wlo05}, \cite[Section\,3.34.2]{Kol07} and \cite[Theorem\,1.1]{BM08}. However, mentioned algorithms have a very high complexity, thus it is a very hard task in general to concretely apply them obtaining an explicit resolution of a given algebraic variety, see \cite{FKP}. By contrast, the application of the very explicit Bott-Samelson desingularization procedure for real embedded Schubert varieties, see \cite{Dem74,Zel83} and the books \cite{FP98,Man01,Bri05}, allows us to conbinatorically control the equations defining the center of each blow-up of the resolution. We emphasize that, to the best of the author's knowledge, the $\R|\Q$-regularity of real embedded Bott-Samelson varieties is not a direct consequence of results already known in the literature, although Demazure \cite{Dem74} develops the desingularization procedure in the very general setting of flag varieties over a field $k$. Indeed, for our purposes, the choice of an appropriate embedding in some real affine space of each Grassmannian manifold $\G_{m,n}$ and of the resulting desingularization of its embedded Schubert subvarieties plays a crucial role for the notion of $\R|\Q$-regularity, see Example \ref{ex:R|Q-reg} for a clear explanation. We refer the interested reader to Remark \ref{rem:Q-des} for more details on how to deduce the existence of a $\Q$-desingularization of $\Q$-algebraic sets from general results in resolution of singularities and why Bott-Samelson procedure is preferable in the case of Schubert varieties.

Section \ref{sec:3} is divided in two different subsections. The first one is devoted to adapt `over $\Q$' the topological construction of what we call ``relative bordisms" introduced by Akbulut and King in \cite{AK81b}. We stress that the topological ideas come from Akbulut and King paper but to ensure the equations to have rational coefficients, both globally and locally, our choice of an appropriate embedding is crucial. The second subsection is a review on $\Q$-stable $\Q$-algebraic sets introduced by Ghiloni and the author in \cite{GSa}.

Section \ref{sec:4} contains the main results of the paper, whose proofs require all previous constructions. Consider $\R^{k}$ equipped with the usual Euclidean topology, for every $k\in\N$. Let $V$ be a nonsingular real algebraic subset of $\R^{n}$ of dimension $d$ equipped with the relative topology of $\R^{n}$. Let $\mscr{C}^\infty(V,\R^k)$ be the set of all $\mscr{C}^\infty$ maps from $V$ to $\R^k$, and let $\Nn(V,\R^k)\subset\mscr{C}^\infty(V,\R^k)$ be the set of all Nash maps from $V$ to $\R^k$. Denote by $\mscr{C}^\infty_\w(V,\R^k)$ the set $\mscr{C}^\infty(V,\R^k)$ equipped with the usual weak $\mscr{C}^\infty$ topology, see \cite[\S 2.1]{Hir94}, and $\Nn_\w(V,\R^k)$ the set $\Nn(V,\R^k)$ equipped with the relative topology induced by $\mscr{C}^\infty_\w(V,\R^k)$. Here we summarize both statements of Theorems \ref{thm:Q_tico_tognoli} \& \ref{thm:non-compact} to completely address the above \textsc{Relative $\Q$-algebraicity problem}.

\vspace{0.7em}

\begin{mthm}
Let $V$ be a nonsingular algebraic subset of $\R^{n}$ of dimension $d$ and let $\{V_i\}_{i=1}^\ell$ be a finite family of nonsingular algebraic subsets of $V$ of codimension $\{c_i\}_{i=1}^\ell$ in general position. Set $m:=\max\{n,2d+1\}$, if $V$ is compact, or $m:=n+2d+3$, if $V$ is non-compact.

Then, for every neighborhood $\mathcal{U}$ of the inclusion map $\iota:V\hookrightarrow\R^{m}$ in $\Nn_{\w}(V,\R^{m})$ and for every neighborhood $\mathcal{U}_i$ of the inclusion map $\iota|_{V_i}:V_i\hookrightarrow\R^m$ in $\Nn_{\w}(V_i,\R^m)$, for every $i\in\{1,\dots,\ell\}$, there exist a $\Q$-nonsingular $\Q$-algebraic set $V'\subset\R^m$, a family $\{V'_i\}_{i=1}^\ell$ of $\Q$-nonsingular $\Q$-algebraic subsets of $V'$ in general position and a Nash diffeomorphism $h:V\to V'$, which simultaneously takes each $V_i$ to $V'_i$, such that, if $\jmath:V'\hookrightarrow\R^m$ denotes the inclusion map, then $\jmath\circ h\in\mathcal{U}$ and $(\jmath\circ h)|_{V_i}\in\mathcal{U}_i$, for every $i\in\{1,\dots,\ell\}$. Moreover, $h$ extends to a semialgebraic homeomorphism from $\R^m$ to $\R^m$.
\end{mthm}

\vspace{2em}


\section{Review on (real) $\Q$-algebraic geometry}
\label{sec:1}

\vspace{0.5em}

\subsection{$\Q$-Algebraic sets, $\R|\Q$-regularity and $\Q$-regular functions}

In this subsection we briefly recall some fundamental notions of real and complex $\Q$-algebraic geometry introduced in \cite{FG,GSa}.

\emph{Let $L|K$ be a extension of fields.} Fix $n\in\N\setminus\{0\}$. Consider $K[x]:=K[x_1,\dots,x_{n}]\subset L[x_1,\dots,x_{n}]=:L[x]$ and $K^{n}\subset L^{n}$. Given $F\subset L[x]$ and $S\subset L^{n}$ define
\begin{align*}
\mathcal{Z}_L(F)&:=\{x\in L^{n}:f(x)=0,\,\forall f\in F\},\\
\I_K(S)&:=\{f\in K[x]:f(x)=0,\,\forall x\in S\}.
\end{align*}

Clearly $\I_K(S)$ is an ideal of $K[x]$. If $F=\{f_1,\dots,f_s\}\subset L[x]$, for some $s\in\N$, then we set $\mathcal{Z}_L(f_1,\dots,f_s):=\mathcal{Z}_\R(F)$.

Let us generalize the notions of (real) algebraic and $\Q$-algebraic sets.

\vspace{0.5em}

\begin{definition}
Let $V\subset L^{n}$. We say that $V$ is a \emph{$K$-algebraic subset of $L^{n}$}, or $V\subset L^{n}$ is a \emph{$K$-algebraic set}, if there exists $F\subset K[x]$ such that $V=\mathcal{Z}_L(F)$.
\end{definition}

\vspace{0.5em}

Denote as the \emph{$K$-Zariski topology of $L^{n}$} the unique topology of $L^{n}$ having $K$-algebraic sets as closed sets. Such a topology is noetherian, since it is coarser than the usual Zariski topology of $L^{n}$. As usual, Noetherianity implies that every $K$-algebraic subset of $L^{n}$ is the common solution set of a finite number of polynomials with coefficients in $K$.

\emph{Let $V\subset L^{n}$ be a $K$-algebraic set.} We say that $V\subset L^{n}$ is \emph{$K$-irreducible} if it is irreducible with respect to the $K$-Zariski topology. By classical arguments, we have that $V\subset L^{n}$ is $K$-irreducible if and only if $\I_K(V)$ is a prime ideal of $K[x]$.  Noetherianity also implies that every $K$-algebraic set can be uniquely decomposed as a finite union of $K$-irreducible algebraic subsets of $V$. We call those $K$-irreducible algebraic subsets the \emph{$K$-irreducible components} of $V\subset L^{n}$. Observe that $L$-irreducible components of $V\subset L^{n}$ coincides with the usual irreducible components of $V$, viewed as an algebraic subset of $L^{n}$. As the usual ($L$-)Zariski topology is finer than the $K$-Zariski topology, if the $K$-algebraic set $V\subset L^{n}$ is irreducible, it is also $K$-irreducible. If both $L$ and $K$ are algebraically closed or real closed, the converse implication is also true. Otherwise, it may happen that $V\subset L^{n}$ is $K$-irreducible but not irreducible. An example of this behavior can be found choosing $L|K=\R|\Q$ and $V:=\{-\sqrt{2},\sqrt{2}\}\subset\R$.

The \emph{$K$-dimension $\dim_K(V)$} of $V$ is defined as the Krull dimension of the ring $K[x]/\I_K(V)$. As above, $\dim_L(V)$ coincides with the usual dimension $\dim(V)$ of $V$, viewed as an algebraic subset of $L^{n}$. A $K$-version of a classical result concerning irreducibility and dimension holds:  if $V\subset L^{n}$ is $K$-irreducible and $W\subset L^{n}$ is a $K$-algebraic set such that $W\subsetneq V$, then $\dim_K(W)<\dim_K(V)$. Moreover, if $L$ is algebraically closed or real closed, then $\dim_K(V)=\dim(V)$. For further information on these topics we refer to \cite[\S 2]{FG}.

Moreover, if $L$ is algebraically closed, an application of Hilbert's Nullstellensatz and Galois Theory in \cite[Theorem\,2.3.4(vi)]{FG} guarantees that $\I_L(V)=\I_K(V)L[x]$, thus the ideal $\I_K(V)$ of $K[x]$ gives complete information about $V\subset L^{n}$. Latter equality holds as well when both $L$ and $K$ are real closed fields by the Tarski-Seidenberg principle, see \cite[Corollary\,2.2.17(ii)]{FG}. However, this is false for a general field extension $L|K$. As an example, if $L|K=\R|\Q$ and $V:=\{x-\sqrt[3]{2}=0\}=\{x^3-2=0\}\subset\R$, then

\begin{equation}\label{eq:non_Q-det}
\I_\R(V)=(x-\sqrt[3]{2})\R[x]\supsetneqq(x^3-2)\R[x]=\I_\Q(V)\R[x].
\end{equation}

We say that an algebraic set $S\subset L^{n}$ is \emph{defined over $K$} if $\I_L(S)=\I_K(S)L[x]$. Clearly, if $S\subset L^{n}$ is defined over $K$, then $S\subset L^n$ is also $K$-algebraic. As said before, if $L$ is algebraically closed or both $L$ and $K$ are real closed, then the concepts of $K$-algebraic subset of $L^{n}$ and algebraic subset of $L^{n}$ defined over $K$ do coincide. As explained in \cite[Appendix\,C]{GSa}, in the real algebraic setting the situation is completely different and various notions of nonsingularity over $\Q$ arise. In what follows we recall the definition of $\R|\Q$-regular points of a $\Q$-algebraic set introduced and studied in \cite{FG} and whose characterizations are further investigated in \cite{GSa}. 

Pick a point $a=(a_1,\ldots,a_{n})\in\R^n$. We denote by $\mathfrak{n}_a$ the maximal ideal of polynomials of $\R[x]$ vanishing at $a$, that is
\[
\mathfrak{n}_a:=(x_1-a_1,\ldots,x_{n}-a_{n})\R[x]
\]
and by $\I_\Q(V)$ the vanishing ideal of $V$ in $\Q[x]$, as above.
The following notion of $\R|\Q$-regular point was introduced in \cite[Definition\,5.1.1]{FG}.

\vspace{0.5em}

\begin{definition}[$\R|\Q$-regular points]\label{def:R|Q_p}
Let $V\subset\R^{n}$ be a $\Q$-algebraic set and let $a\in V$. We define the \emph{$\R|\Q$-local ring $\mathcal{R}^{\R|\Q}_{V,a}$ of $V$ at $a$} as
\[
\mathcal{R}^{\R|\Q}_{V,a}:=\R[x]_{\mathfrak{n}_a}/(\mathcal{I}_{\Q}(V)\R[x]_{\mathfrak{n}_a}).
\]
We say that $a$ is a \emph{$\R|\Q$-regular point of $V$} if $\mathcal{R}^{\R|\Q}_{V,a}$ is a regular local ring of dimension $\dim(V)$. We denote by $\Reg^{\R|\Q}(V)$ the set of all $\R|\Q$-regular points of $V$.
\end{definition}

\vspace{0.5em}

In \cite[Theorem\,5.4.1]{FG}, Fernando and Ghiloni show that the set $\Reg^{\R|\Q}(V)$ is a non-empty Zariski open subset of $\Reg(V)$. Though, it may happen that $\Reg^{\R|\Q}(V)\subsetneq\Reg(V)$, see the example below.

\begin{example}\label{ex:R|Q-reg}
Consider the $\Q$-algebraic line $V:=\{x_1-\sqrt[3]{2}x_2=0\}=\{x_1^3-2x_2^3=0\}\subset\R^2$. Observe that $V$ is nonsingular (as an algebraic set), but $\overline{O}:=(0,0)$ is not $\R|\Q$-regular. Indeed, $\Ii_{\Q}(V)=(x_1^3-2x_2^3)$ and $\mathcal{R}^{\R|\Q}_{V,\overline{O}}:=\R[x_1,x_2]_{(x_1,x_2)}/(x_1^3-2x_2^3)\R[x_1,x_2]_{(x_1,x_2)}$, hence $\mathcal{R}^{\R|\Q}_{V,\overline{O}}$ is not an integral domain since $x_1^3-2x_2^3=(x_1-\sqrt[3]{2}x_2)(x_1^2+\sqrt[3]{2}x_1x_2+\sqrt[3]{4}x_2^2)$. In particular, this implies that $\mathcal{R}^{\R|\Q}_{V,\overline{O}}$ is not a regular local ring either.
\end{example}

This leads to the following definition originally introduced in \cite[Definition\,1.9]{GSa}. 

\vspace{0.5em}

\begin{definition}\label{def:R|Q}
Let $V\subset\R^{n}$ be a $\Q$-algebraic set. We say that $V$ is \emph{$\Q$-determined} if $\Reg^{\R|\Q}(V)=\Reg(V)$. If in addition $V$ is nonsingular, in other words, $V=\Reg(V)=\Reg^{\R|\Q}(V)$, we say that $V$ is \emph{$\Q$-nonsingular}.
\end{definition}

\vspace{0.5em}

\begin{notation}
In what follows, if $V\subset\R^{n}$ is a $\Q$-algebraic set and $a\in V$, we set $\reg^*_{V,a}:=\reg^{\R|\Q}_{V,a}$ and $\Reg^*(V):=\Reg^{\R|\Q}(V)$ for short.
\end{notation}

\vspace{0.5em}

In \cite[Theorem\,5.1.9]{FG} the authors characterized the notion of $\R|\Q$-regularity via a $\R|\Q$-Jacobian criterion. In particular, if $V\subset\R^{n}$ of dimension $d$ is a nonsingular $\Q$-algebraic set, then $V$ is $\Q$-nonsingular if and only if for every $a\in V$ there are $q_1,\dots,q_{n-d}\in\mathcal{I}_\Q(V)$ such that $\nabla q_1(a),\dots,\nabla q_{n-d}(a)$ are linearly independent and there exists a Zariski open subset $U$ of $\R^{n}$ such that $V\cap U=\mathcal{Z}(q_1,\dots,q_{n-d})\cap U$. A systematic treatment on equivalent definitions of $\Q$-nonsingular $\Q$-algebraic sets involving Galois Theory, the complex Zariski closure of a (real) $\Q$-algebraic set and the vanishing ideal in the ring of global rational functions can be found in \cite[\S 1.6]{Sav-T}.


Here we recall a crucial consequence of mentioned $\R|\Q$-Jacobian criterion \cite[Theorem\,5.1.9]{FG} and deeply applied in that paper. Its importance will be clear in the proof of our Main Theorem, more precisely, in the proof of Lemma \ref{lem:Q-Tognoli} below.

\vspace{0.5em}

\begin{proposition}[{\cite[Proposition\,2.14]{GSa}}]\label{cor:Q_setminus}
Let $V\subset\R^{n}$ and $Z\subset\R^{n}$ be two $\Q$-nonsingular $\Q$-algebraic sets of the same dimension $d$ such that $Z\subsetneq V$. Then $V\setminus Z\subset\R^{n}$ is a $\Q$-nonsingular $\Q$-algebraic set of dimension $d$ as well.
\end{proposition}

\vspace{0.5em}


Let us recall the notion of $\Q$-regular map introduced in \cite{GSa}. Let $S\subset\R^{n}$ be a set and let $f:S\to\R$ be a function. We say that $f$ is \emph{$\Q$-regular} if there are $p,q\in\Q[x]=\Q[x_1,\ldots,x_{n}]$ such that $\mathcal{Z}_\R(q)\cap S=\varnothing$ and $f(x)=\frac{p(x)}{q(x)}$ for every $x\in S$. We denote by $\reg^\Q(S)$ the set of $\Q$-regular functions on $S$. Observe that usual pointwise addition and multiplication induce a natural ring structure on $\reg^\Q(S)$. Let $T\subset\R^h$ be a set ad let $g:S\to T$ be a map. We say that $g$ is \emph{$\Q$-regular} if there exist $g_1,\ldots,g_h\in\reg^\Q(S)$ such that $g(x)=(g_1(x),\ldots,g_h(x))$ for all $x\in S$. We denote by $\reg^\Q(S,T)$ the set of $\Q$-regular maps from $S$ to $T$. We say that the map $g:S\to T$ is a \emph{$\Q$-biregular isomorphism} if $g$ is bijective and both $g$ and $g^{-1}$ are $\Q$-regular. If there exists such a $\Q$-biregular isomorphism $g:S\to T$, we say that $S$ is \emph{$\Q$-biregularly isomorphic} to $T$. Observe that, as usual in real algebraic geometry, previous global definitions of $\Q$-regular functions and maps do coincide with the local ones, that is: for each $a\in S$, there exist $p_a,q_a\in\Q[x]$ such that $q_a(a)\neq0$ and $f(x)=\frac{p_a(x)}{q_a(x)}$ for all $x\in S\setminus \mathcal{Z}_\R(q_a)$ or there exist $p_{a,1}\dots,p_{a,h},q_{a}\in\Q[x]$ such that $q_{a}(a)\neq0$ and $g(x)=(\frac{p_{a,1}(x)}{q_{a}(x)},\dots,\frac{p_{a,h}(x)}{q_{a}(x)})$ for all $x\in S\setminus \mathcal{Z}_\R(q_a)$.

Ghiloni and the author proved basic properties of $\Q$-determined $\Q$-algebraic sets and $\Q$-regular maps, for more details we refer to \cite[Lemma\,2.18\,\&\,Proposition\,2.19]{GSa}. Those properties will be strongly applied in this paper.

Let us recall the definitions of overt polynomial and projectively $\Q$-closed $\Q$-algebraic set, introduced in \cite[p.\ 427]{AK81a} and specified over $\Q$ in \cite{GSa}. Let $p\in\R[x]$ be a polynomial. Write $p$ as follows: $p=\sum_{i=0}^dp_i$, where $d$ is the degree of $p$ and each polynomial $p_i$ is homogeneous of degree $i$ (as a convention we assume the degree of the zero polynomial to be $0$). We say that the polynomial $p\in\R[x]$ is \emph{overt} if $\mathcal{Z}_\R(p_d)$ is either empty or $\{0\}$. Observe that a non-constant overt polynomial function $p:\R^{n}\to\R$ is proper. A $\Q$-algebraic set $V\subset\R^{n}$ is called \emph{projectively $\Q$-closed} if there exists an overt polynomial $p\in\Q[x]\subset\R[x]$ such that $V=\mathcal{Z}_\R(p)$. This notion coincides to require that $\theta(V)$ is Zariski closed in $\Proy^{n}(\R)$ and it is described by polynomial equations with rational coefficients, where $\theta:\R^{n}\to\Proy^{n}(\R)$ denotes the affine chart $\theta(x_1,\ldots,x_{n}):=[1,x_1,\ldots,x_{n}]$. As a consequence, if $V$ is projectively closed, then it is compact in $\R^{n}$. We refer to \cite[Lemma\,2.24]{GSa} for fundamental properties of projectively $\Q$-closed $\Q$-algebraic subsets of $\R^{n}$.

\subsection{Fundamental examples}

Let $\G_{m,n}$ denote the Grassmannian manifold of $m$ dimensional subspaces of $\R^{m+n}$. Identify $\R^{(m+n)^2}$ with the set of $(m+n)\times (m+n)$ real matrices. It is well known, see \cite[\S 3]{BCR98}, that every Grassmannian $\G_{m,n}$ is biregular isomorphic to the following algebraic subset of $\R^{(m+n)^2}$:
\begin{equation}\label{eq:GG}
\G_{m,n}=\big\{X\in\R^{(m+n)^2}:X^T=X,X^2=X,\mr{tr}(X)=m\big\}.
\end{equation}
The biregular map assigns to each point $p$ of the Grassmannian, corresponding to an $m$ subspace $V_p$ of $\R^{m+n}$, the matrix $X_p\in\R^{(m+n)^2}$ of the orthogonal projection of $\R^{m+n}$ onto $V_p$ with respect to the canonical basis of $\R^{m+n}$.

Let $\E_{m,n}$ denote the (total space of the) universal vector bundle over $\G_{m,n}$ as the following algebraic subset of $\R^{(m+n)^2+m+n}$: 
\[
\E_{m,n}:=\{(X,y)\in\G_{m,n}\times\R^{m+n}\,:\,Xy=y\}.
\]
It is well-known that $\E_{m,n}$ is a connected $\mscr{C}^{\infty}$ submanifold of $\R^{(m+n)^2+m+n}$ of dimension $m(n+1)$.

In \cite[Lemma\,2.31]{GSa}, Ghiloni and the author proved that both $\G_{m,n}\subset\R^{(m+n)^2}$ and $\E_{m,n}\subset\R^{(m+n)^2+m+n}$ are projectively $\Q$-closed $\Q$-nonsingular $\Q$-algebraic sets. These algebraic sets are fundamental examples to represent cobordism classes of compact smooth manifolds and to develop algebraic approximation techniques over $\Q$ as in \cite[\S 3]{GSa}. However, for the purposes of this paper we need further examples to develop relative techniques `over $\Q$'  in the spirit of \cite{AK81b}.

Let $\E_{m,n}$ denote the (total space of the) universal sphere bundle over $\G_{m,n}$ as the following algebraic subset of $\R^{(m+n)^2+m+n+1}$:
\[
\E^*_{m,n}=\{(X,y,t)\in\G_{m,n}\times\R^{m+n}\times\R\,|\,Xy=y,\,|y|_n^2+t^2=t \}.
\]
It is well-known that $\E^*_{m,n}$ is a connected $\mscr{C}^{\infty}$ submanifold of $\R^{(m+n)^2}\times\R^{m+n}\times\R$ of dimension $m(n+1)$.

\vspace{0.5em}

\begin{lemma}\label{lem:Q_sphere_bundle}
Each universal sphere bundle $\E^*_{m,n}\subset\R^{(m+n)^2+m+n+1}$ is a projectively $\Q$-closed $\Q$-nonsingular $\Q$-algebraic set.
\end{lemma}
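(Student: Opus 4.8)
The plan is to mimic the strategy already used in \cite{GSa} for $\G_{m,n}$ and $\E_{m,n}$, namely to exhibit an explicit \emph{overt} polynomial with rational coefficients whose real zero set is exactly $\E^*_{m,n}$, and then to verify $\R|\Q$-nonsingularity by producing, at each point, an explicit system of rational polynomials among the defining equations whose gradients are linearly independent and which cut out $\E^*_{m,n}$ locally. Since $\E^*_{m,n}$ sits inside $\G_{m,n}\times\R^{m+n}\times\R\subset\R^{(m+n)^2+m+n+1}$ and we already know from \cite[Lemma\,2.21]{GSa} that $\G_{m,n}$ and $\E_{m,n}$ are projectively $\Q$-closed and $\Q$-nonsingular, the work reduces to handling the single extra coordinate $t$ and the extra equation $|y|_n^2+t^2=t$.

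First I would write down the defining equations over $\Q$: the entries of $X^T-X$, of $X^2-X$, the scalar $\mr{tr}(X)-m$, the vector $Xy-y$, and finally $q(y,t):=\sum_{j=1}^{m+n}y_j^2+t^2-t$. All of these have rational (indeed integer) coefficients, so $\E^*_{m,n}$ is a $\Q$-algebraic subset of $\R^{(m+n)^2+m+n+1}$. For projective $\Q$-closedness I would take the sum of squares of all these polynomials together with a power of the overt polynomial $p$ witnessing projective $\Q$-closedness of $\E_{m,n}$ (from \cite[Lemma\,2.21]{GSa}); the only new top-degree contribution is $(\sum y_j^2 + t^2)^2$ from $q^2$, whose real zero set is $\{y=0,\,t=0\}$, so after combining with the already-overt polynomial for $\E_{m,n}$ the resulting polynomial is overt with rational coefficients and has real zero set exactly $\E^*_{m,n}$. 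Hence $\E^*_{m,n}$ is projectively $\Q$-closed, and in particular compact.

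Next, for $\Q$-nonsingularity, I would first confirm $\E^*_{m,n}$ is a nonsingular algebraic set of dimension $m(n+1)$: over a point $p\in\G_{m,n}$ the fibre is the sphere $\{|y|_n^2+t^2=t\}\cap\{Xy=y\}$, which is an $m$-sphere of radius $\tfrac12$ in the $m$-plane $\im X$, and the projection $\E^*_{m,n}\to\G_{m,n}$ is a sphere bundle, so $\dim\E^*_{m,n}=m(n+1)$ as stated. For the $\R|\Q$-regularity at a point $(X_0,y_0,t_0)$ I would argue locally: since $\G_{m,n}$ is $\Q$-nonsingular there is a Zariski-open $\Q$-defined neighborhood of $X_0$ on which $(m+n)^2 - m(m+n) = n(m+n)$ of the rational equations among $X^T=X,\,X^2=X,\,\mr{tr}(X)=m$ have independent gradients and cut out $\G_{m,n}$; on such a neighborhood one can, by the $\R|\Q$-Jacobian criterion of \cite[Theorem\,4.1.7]{FG} as recorded after Definition \ref{def:R|Q}, pick $m(n+1)$ of the equations defining $\E^*_{m,n}$ with rational coefficients whose differentials are independent at $(X_0,y_0,t_0)$. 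The point is that the bundle projection realizes $\E^*_{m,n}$ locally as a product: over a trivializing $\Q$-biregular chart of $\G_{m,n}$ one adds the equations $Xy=y$ (whose rank is $n(m+n)$ complementary to the chart) and the single equation $q=0$, which has gradient $(2y_0,\,2t_0-1)$ in the $(y,t)$-directions, nonzero since $2t_0-1=0$ forces $|y_0|_n^2=1/4\neq0$; transversality to the fibre equations $Xy=y$ is immediate because the sphere equation only constrains the $y$-component lying in $\im X_0$ and the radial direction there is not annihilated. Counting: $n(m+n)$ from the Grassmannian chart, $n(m+n)$... — more carefully, I would simply invoke that $\E_{m,n}$ is $\Q$-nonsingular and then note $q$ adds one independent rational equation transverse to the fibre direction, giving $\codim_{\R^{N}}\E^*_{m,n} = \codim\E_{m,n} + 1$ with an independent rational gradient, so $\E^*_{m,n}$ is $\Q$-nonsingular by the Jacobian criterion. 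Combined with the previous paragraph this gives the Lemma.

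The main obstacle I anticipate is the bookkeeping in the last step: producing, uniformly over all of $\E^*_{m,n}$, the correct subset of rational defining polynomials with independent gradients — this is exactly the subtlety that \cite{GSa} had to handle for $\E_{m,n}$, and here one must check that adjoining the sphere equation $q$ preserves independence of gradients everywhere, including at points where $t_0=1/2$ or $y_0$ is "aligned" with $\im X_0$. I expect this to go through by reducing to the already-established $\Q$-nonsingularity of $\E_{m,n}$ (the total space of the disk/line bundle obtained by forgetting $t$, or by a direct transversality argument), since $q=0$ is transverse to the vertical equations $Xy=y$ at every point of $\E^*_{m,n}$; the rest is the routine verification that the explicit overt polynomial has the claimed real zero set.
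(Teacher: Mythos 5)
Your proposal is correct and follows essentially the same route as the paper: exhibit the defining equations over $\Q$, deduce projective $\Q$-closedness from an overt rational polynomial together with containment in $\G_{m,n}\times\sph^{m+n}$, and verify $\R|\Q$-nonsingularity by reducing to the Jacobian-rank computation already done for $\E_{m,n}$ in \cite[Lemma\,2.21]{GSa}, then checking that the gradient of $q=|y|^2_{m+n}+t^2-t$ adds one independent direction (with precisely the $t_0=1/2$ case, forcing $|y_0|^2=1/4\neq0$, as the only point requiring care). The paper carries this out explicitly at the diagonal base point $(D_m,v,c)$ and then transports to general points via $O(m+n)$-equivariance of $\phi$, which is the concrete realization of the "Jacobian criterion" step you invoke.
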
	

\begin{proof}
Let $\phi:\R^{(m+n)^2}\times\R^{m+n}\times\R\rightarrow\R^{(m+n)^2}\times\R^{(m+n)^2}\times\R^{m+n}\times\R$ be the polynomial map defined by
\[
\phi(X,y,t):=(X^T-X,X^2-X,Xy-y,|y|^2_{m+n}+t^2-t).
\]
We prove that the polynomial $\mr{tr}(X)-m$ and the polynomial components of $\phi$ do suffice to describe nonsingular points of $\E^*_{m,n}\subset\R^{(m+n)^2}\times\R^{m+n}\times\R$ via the $\R|\Q$-Jacobian criterion \cite[Theorem\,5.1.9]{FG}. As in the proof of \cite[Lemma\,2.31]{GSa}, it suffices to show that $\mr{rnk}\,J_\phi(A,b,c)\geq (m+n)^2+m+n+1-m(n+1)-1=(m+n)^2-mn+n$ for all $(A,b,c)\in \E^*_{m,n}$.

First, we prove that $\mr{rnk}\,J_\phi(D_m,v,c)\geq (m+n)^2-mn+n$ if $D$ is the diagonal matrix in $\R^{(m+n)^2}$ having $1$ in the first $m$ diagonal positions and $0$ otherwise, $v=(v_1,\ldots,v_{m+n})^T$ is a vector of $\R^{m+n}$ and $c\in\R$ such that $(D_m,v,c)\in \E^*_{m,n}$. For each $\ell\in\{1,\dots,m+n+1\}$, define the polynomial functions $h_\ell:\R^{(m+n)^2}\times\R^{(m+n)}\times\R\to\R$ by
\begin{align*}
h_\ell(X,y,t)&:=\Big(\sum_{j=1}^{m+n} x_{\ell j}y_j\Big)-y_\ell\quad\text{if $\ell\neq m+n+1$},\\
h_{m+n+1}(X,y,t)&:=|y|^2_{m+n}+t^2-t,
\end{align*}
for all $X=(x_{ij})_{i,j}\in\R^{(m+n)^2}$, $y=(y_1,\dots,y_{m+n})\in \R^{m+n}$ and $t\in\R$. Thus, with the same notation of the proof of \cite[Lemma\,2.31]{GSa}, it follows that $$\phi(X,y,t)=((f_{ij}(X))_{i,j},(g_{ij}(X))_{i,j},(h_\ell(X,y,t))_\ell).$$ Thanks to the proof of mentioned \cite[Lemma\,2.31]{GSa}, we already know that the rank of the Jacobian matrix at $(D_m,v,c)$ of the map $$(X,y,t)\mapsto((f_{ij}(X))_{i,j},(g_{ij}(X))_{i,j},(h_\ell(X,y,t))_\ell)$$ is $\geq (m+n)^2-mn+n$. Thus, we only have to look at the components $(h_\ell(X,y))_\ell$ in order to prove that $h_{m+n+1}$ always produces an additional independent gradient with respect to the gradients of $(h_\ell(X,y))_{\ell\neq m+n+1}$. By a direct computation we see that
\begin{align*}
\nabla h_\ell(D_m,v,c)&=\Big(\sum_{j=1}^{m+n} v_j E_{\ell j},-e_\ell,0\Big)\quad\text{if $\ell\in\{m+1,\dots,m+n\}$},\\
\nabla h_{m+n+1}(D_m,v,c)&=\big(0,2v,2c-1\big),
\end{align*}
where $E_{\ell j}$ is the matrix in $\R^{(m+n)^2}$ whose $(\ell,j)$-coefficient is equal to $1$ and $0$ otherwise, and $\{e_1,\ldots,e_{m+n}\}$ is the canonical vector basis of $\R^{m+n}$. Observe that $\nabla h_{m+n+1}(D_m,v,c)$ is linearly independent with respect to $(\nabla h_\ell(X,y))_{\ell\neq m+n+1}$ when $c\neq 1/2$, otherwise, if $c=1/2$, then $$\nabla h_{m+n+1}(D_m,v,c)=(0,2v,0),$$ so it is contained in the $m$-plane satisfying $D_m v=v$, hence it is linearly independent with respect to $(\nabla h_\ell(X,y))_{\ell\neq m+n+1}$ as well. Consequently, we obtain that $\mr{rnk}\,J_\phi(D_m,v,c)\geq (m+n)^2-mn+n$ for every $v\in\R^n$ and $c\in\R$ such that $(D_m,v,c)\in\E^*_{m,n}$.

Let us complete the proof. Let $(A,b,c)\in\E^*_{m,n}$, let $G\in O(m+n)$ be such that $D_m=G^TAG$ and let $v:=G^Tb$. By the choice of $G$ we see that $|v|^2_{m+n}=|G^T v|^2_{m+n}=|b|^2_{m+n}$, hence $c$ satisfies $|v|^2_{m+n}+c^2-c=0$ as well. Note that $D_m v=G^TAGG^Tb=G^TAb=G^Tb=v$, i.e., $(D_m,v,c)\in\E^*_{m,n}$. Define the linear automorphisms $\psi:\R^{(m+n)^2}\to\R^{(m+n)^2}$ and $\tau:\R^{m+n}\rightarrow\R^{m+n}$ by $\psi(X):=G^TXG$ and $\tau(y)=G^T y$. Since $(\psi\times\tau\times\id_{\R})(A,b,c)=(D_m,v,c)$ and $(\psi\times\psi\times\tau\times\id_{\R})\circ\phi=\phi\circ(\psi\times\tau\times\id_{\R})$, we have that $J_{\psi\times\psi\times\tau\times\id_{\R}}(\phi(A,b,c))J_\phi(A,b)=J_\phi(D_m,v,c)J_{\psi\times\tau\times\id_{\R}}(A,b,c)$. Bearing in mind that both matrices $J_{\psi\times\psi\times\tau\times\id_{\R}}(\phi(A,b,c))$ and $J_{\psi\times\tau\times\id_{\R}}(A,b,c)$ are invertible, it follows that $\mr{rnk}\,J_\phi(A,b,c)=\mr{rnk}\,J_\phi(D_m,v,c)\geq (m+n)^2-mn+n+1$, as desired. 

Let $\sph^{m+n}\subset\R^{m+n+1}$ be the standard unit sphere. Since $\E^*_{m,n}\subset\R^{(m+n)^2}\times\R^{m+n}\times\R$ is the zero set of $|\phi(X,y,t)|_{(m+n)^2}^2+(\mr{tr}(X)-m)^2\in\Q[(x_{ij})_{i,j},(y_k)_k,t]$ and $\E^*_{m,n}\subset\G_{m,n}\times \sph^{m+n}$, which is a projectively $\Q$-closed $\Q$-algebraic subset of $\R^{(m+n)^2}\times\R^{m+n}\times\R$ by \cite[Lemmas\,2.31\,\&\,2.24(iv)]{GSa}, we have that $\E^*_{m,n}$ is projectively $\Q$-closed in $\R^{k}\times\R^{m+n}\times\R$ as well by \cite[Lemma\,2.24(ii)]{GSa}.
\end{proof}

Let $W\subset\R^k$ be a nonsingular algebraic set of dimension $d$. Let $\G:=\prod_{i=1}^{\ell}\G_{m_i,n_i}$, let $\E^*:=\prod_{i=1}^{\ell}\E_{m_i,n_i}^*$ and le $\mu:W\rightarrow \G$ be a regular map. Let $\pi_i:=\G\to\G_{m_i,n_i}$ be the projection onto the $i$-th factor and let $\mu_i:W\to\G_{m_i,n_i}$ be defined as $\mu_i:=\pi_i\circ \mu$ for every $i\in\{1,\dots,\ell\}$. We define the pull-back sphere bundle $\mu^*(\E^*)$ over $W$ of $\E^*$ via $\mu$ as the following algebraic subset of $\R^k\times\prod_{i=1}^{\ell}(\R^{m_i+n_i}\times\R)$:
\begin{align*}
\mu^*(\E^*):=\{(x,y^1,t_1,&\dots,y^{\ell},t_{\ell})\in W\times\prod_{i=1}^{\ell}(\R^{m_i+n_i}\times\R)\,|\,\\&\mu_i(x)y^i=y^i,\,
|y^i|_{m+n}^2+t_i^2=t_i \text{ for }i=1,\dots,\ell\}.
\end{align*}
It is well-known that $\mu^*(\E^*)$ is a compact $\mscr{C}^{\infty}$ submanifold of $\R^{k}\times(\R^{m+n}\times\R)^\ell$ of dimension $d+\sum_{i=1}^{\ell} m_i$.

\vspace{0.5em}

\begin{lemma}\label{lem:Q_sphere_pullback}
Let $W\subset\R^k$ be a projectively $\Q$-closed $\Q$-nonsingular $\Q$-algebraic set of dimension $d$. Let $\mu:W\rightarrow \G$ be a $\Q$-regular map. Then $\mu^*(\E^*)\subset \R^k\times\prod_{i=1}^{\ell}(\R^{m_i+n_i}\times\R)$ is a projectively $\Q$-closed $\Q$-nonsingular $\Q$-algebraic set.
\end{lemma}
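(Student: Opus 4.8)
The plan is to follow the scheme of the proof of Lemma~\ref{lem:Q_sphere_bundle}, adapting the pull-back argument used in \cite[\S\,2]{GSa}: I would exhibit a polynomial map over $\Q$ whose zero set is exactly $\mu^*(\E^*)$ and whose Jacobian has rank equal to the codimension at every point of $\mu^*(\E^*)$, invoke the $\R|\Q$-Jacobian criterion \cite[Theorem\,4.1.7]{FG} to get $\Q$-nonsingularity, and then deduce projective $\Q$-closedness from the fact that $\mu^*(\E^*)$ is a $\Q$-algebraic subset of a product of projectively $\Q$-closed sets. Put $d_i:=m_i+n_i$. Since $\mu$ is $\Q$-regular, so is each $\mu_i:W\to\G_{m_i,n_i}$, and after clearing denominators we may write $\mu_i=P_i/q_i$ on $W$, where $P_i$ is a $d_i\times d_i$ matrix with entries in $\Q[x]$ and $q_i\in\Q[x]$ satisfies $\mathcal{Z}_\R(q_i)\cap W=\varnothing$. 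Fix a finite generating set $g_1,\dots,g_s$ of $\mathcal{I}_\Q(W)$, put $\Bb_i:=\{(u,v)\in\R^{d_i}\times\R:|u|^2_{d_i}+v^2-v=0\}$, and let $\Phi$ be the polynomial map over $\Q$ on $\R^k\times\prod_{i=1}^{\ell}(\R^{d_i}\times\R)$ whose components are the $g_j$, the $d_i$ entries of $P_i(x)y^i-q_i(x)y^i$ for $i=1,\dots,\ell$, and the polynomials $|y^i|^2_{d_i}+t_i^2-t_i$ for $i=1,\dots,\ell$. Since $q_i$ does not vanish on $W$, on $W\times\prod_i(\R^{d_i}\times\R)$ the identity $P_i(x)y^i=q_i(x)y^i$ is equivalent to $\mu_i(x)y^i=y^i$, so $\mathcal{Z}_\R(\Phi)=\mu^*(\E^*)$, whose codimension in the ambient affine space equals $c:=(k-d)+\sum_{i=1}^{\ell}(n_i+1)$.

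The crux is to show $\mr{rnk}\,J_\Phi(w)\geq c$ at every $w=(a,b^1,c_1,\dots,b^\ell,c_\ell)\in\mu^*(\E^*)$; together with $\mathcal{Z}_\R(\Phi)=\mu^*(\E^*)$ this yields $\Q$-nonsingularity via \cite[Theorem\,4.1.7]{FG} exactly as in Lemma~\ref{lem:Q_sphere_bundle}. The gradients $\nabla g_j(w)=(\nabla g_j(a),0,\dots,0)$ lie in the $x$-coordinate subspace and, since $W$ is $\Q$-nonsingular, span there the $(k-d)$-dimensional conormal space of $W$ at $a$ by the $\R|\Q$-Jacobian criterion. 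For the fibre equations I would argue one block $i$ at a time, noting that $\mu_i(a)$ is here not a free coordinate (unlike in Lemma~\ref{lem:Q_sphere_bundle}, where one normalizes it to $D_m$), so one reasons intrinsically from \eqref{eq:GG}: $\mu_i(a)$ is the matrix of an orthogonal projection of rank $m_i$, hence $\mu_i(a)-I_{d_i}$ is symmetric of rank $n_i$, with kernel $\im\mu_i(a)$ and image $(\im\mu_i(a))^\perp$. The components along the $i$-th fibre factor $\R^{d_i}\times\R$ of the gradients of the entries of $P_i(x)y^i-q_i(x)y^i$ are the rows of $q_i(a)\big(\mu_i(a)-I_{d_i}\big)$ in the $y^i$-slot (and $0$ in the $t_i$-slot), so they span the $n_i$-dimensional space $(\im\mu_i(a))^\perp$; the corresponding component of $\nabla\big(|y^i|^2_{d_i}+t_i^2-t_i\big)$ at $w$ is $(2b^i,2c_i-1)$. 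If $c_i\neq\thalf$ this last vector has nonzero $t_i$-coordinate, hence lies outside the previous $n_i$-dimensional subspace; if $c_i=\thalf$, the sphere equation gives $|b^i|^2_{d_i}=c_i-c_i^2=\tfrac14\neq0$, so $b^i\neq0$, and $b^i\in\ker(\mu_i(a)-I_{d_i})=\im\mu_i(a)$, which is complementary to $(\im\mu_i(a))^\perp$, so $(2b^i,0)$ again lies outside it. In both cases the fibre components of these $d_i+1$ gradients span an $(n_i+1)$-dimensional subspace of the $i$-th factor; since the $\ell$ fibre factors are in direct sum and the $\nabla g_j(w)$ project to $0$ on all of them, these contributions add up to $\mr{rnk}\,J_\Phi(w)\geq(k-d)+\sum_{i=1}^{\ell}(n_i+1)=c$.

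For projective $\Q$-closedness: the polynomial $|u|^2_{d_i}+v^2-v\in\Q[u,v]$ is overt, since its degree-$2$ homogeneous part $|u|^2_{d_i}+v^2$ vanishes only at the origin, so each $\Bb_i$ is a projectively $\Q$-closed $\Q$-algebraic set; as $W$ is projectively $\Q$-closed and finite products of projectively $\Q$-closed sets are projectively $\Q$-closed by \cite[Lemma\,2.15]{GSa}, so is $W\times\prod_{i=1}^{\ell}\Bb_i$. Since $\mu^*(\E^*)=\big(W\times\prod_{i=1}^{\ell}\Bb_i\big)\cap\bigcap_{i=1}^{\ell}\mathcal{Z}_\R\big(P_i(x)y^i-q_i(x)y^i\big)$ is a $\Q$-algebraic subset of the projectively $\Q$-closed set $W\times\prod_i\Bb_i$, it is itself projectively $\Q$-closed by \cite[Lemma\,2.15]{GSa}, completing the argument. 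I expect the Jacobian rank estimate of the second paragraph to be the only genuine obstacle: one must cleanly separate the base-direction contribution (controlled by the $\Q$-nonsingularity of $W$) from the fibre-direction contributions, and in the latter handle the locus $t_i=\thalf$ on which the sphere equation's gradient degenerates yet remains independent of the bundle equations because $y^i\neq0$ there — the same subtlety already present in the proof of Lemma~\ref{lem:Q_sphere_bundle}.
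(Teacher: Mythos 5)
Your proof is correct, and the route it takes differs from the paper's in a way worth noting. The paper reduces the Jacobian computation to the "normal form" $\mu(a)=D_m$ by conjugating with some $G\in O(m+n)$ and then reuses the explicit gradient computation from Lemma~\ref{lem:Q_sphere_bundle}; it also only writes out the case $\ell=1$. You instead argue intrinsically at an arbitrary point: you observe that the $y^i$-components of the gradients of the (denominator-cleared) bundle equations are the rows of $q_i(a)\bigl(\mu_i(a)-I_{d_i}\bigr)$, which span $(\im\mu_i(a))^{\perp}$ because $\mu_i(a)$ is a symmetric rank-$m_i$ projection, and the sphere-equation gradient escapes that span in both the $c_i\neq\tfrac12$ and $c_i=\tfrac12$ cases. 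A blockwise rank count (the $g_j$'s live purely in the $\R^k$-factor, each fibre block lives in its own $\R^{d_i}\times\R$, and the $\ell$ fibre blocks never mix) then gives the bound directly for general $\ell$. This avoids the conjugation detour and the slightly delicate comparison $\mr{rnk}\,J_\phi(a,b,c)=\mr{rnk}\,J_\psi(a,v,c)$ for two distinct maps $\phi,\psi$ that the paper relies on.

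There is also a point where your proof is more careful than the paper's as written: the $\R|\Q$-Jacobian criterion asks for elements of $\mathcal{I}_\Q(V)$, i.e.\ \emph{polynomials}, but $\mu$ is only $\Q$-\emph{regular}, so the entries of $\mu(x)y-y$ (as used in the paper's $\phi$) are rational functions, not polynomials, and the claim that $\lvert\phi\rvert^2\in\Q[x,y,t]$ in the last paragraph of the paper's argument is literally false. Your explicit denominator-clearing, replacing $\mu_i(x)y^i-y^i$ with $P_i(x)y^i-q_i(x)y^i$, supplies exactly the polynomials the criterion requires and, since $q_i$ does not vanish on $W$, cuts out the same set there. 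Your handling of the projective $\Q$-closedness via $W\times\prod_i\Bb_i$ and \cite[Lemma\,2.15]{GSa} is essentially the same as the paper's.

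One small remark: when you invoke ``the $\nabla g_j(a)$ span the $(k-d)$-dimensional conormal space,'' the justification is that gradients of elements of $\mathcal{I}_\Q(W)$ at $a\in W$ all lie in the span of the $\nabla g_j(a)$ (since $a\in W$ kills the Leibniz cross terms), and the $\R|\Q$-Jacobian criterion for $\Q$-nonsingular $W$ guarantees $k-d$ linearly independent ones among them; this is worth a sentence to be fully explicit, but it is not a gap.
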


\begin{proof}
For simplifying the notation we only prove the case $\ell=1$, in the general case the proof works in the same way. Let $\ell=1$, $\G=\G_{m,n}$ and $\E=\E^*_{m,n}$. 
There are $s\in\N^*$ and $p_1,\dots,p_s\in\Q[x]=\Q[x_1,\dots,x_k]$ such that $\Ii_{\R^k}(W)=(p_1,\dots,p_s)$. Let $\phi:\R^{k}\times\R^{m+n}\times\R\rightarrow\R^s\times\R^{m+n}\times\R$ be the regular map defined by
\begin{align*}
\phi(x,y,t):=(p_1(x),\dots,p_s(x),&\mu(x)y-y,|y|^2_{m+n}+t^2-t),
\end{align*}
where $\mu(x)\in\G\subset\R^{(m+n)^2}$ is in matrix form. We prove that the polynomial components of $\phi$ do suffice to describe nonsingular points of $\mu^*(\E^*)\subset\R^{k}\times\R^{m+n}\times\R$ via the $\R|\Q$-Jacobian criterion \cite[Theorem\,5.1.9]{FG}. As before, it suffices to show that $\mr{rnk}\,J_\phi(a,b,c)\geq k-d+n+1$ for all $(a,b,c)\in \mu^*(\E^*)$.

As in the proof of Lemma \ref{lem:Q_sphere_bundle}, for every $r\in\{1,\dots,m+n+1\}$, define the polynomial functions $h_r:\R^{(m+n)^2}\times\R^{m+n}\times\R\to\R$ by
\begin{align*}
h_r(X,y,t)&:=\Big(\sum_{j=1}^{m+n} x_{rj}y_j\Big)-y_s\quad\text{if $r\neq m+n+1$}\\
h_{m+n+1}(X,y,t)&:=|y|^2_{m+n}+t^2-t,
\end{align*}
for all $X=(x_{ij})_{ij} \in\R^{(m+n)^2}$, $y=(y_1,\dots,y_{m+n})\in \R^{m+n}$ and $t\in\R$. Thus, with the same notation of the proof of Lemma \ref{lem:Q_sphere_bundle}, it follows that
\[
\phi(x,y,t)=(p_1(x),\dots,p_s(x),h_1(\mu(x),y,t),\dots,h_{m+n+1}(\mu(x),y,t)). 
\]
Let $\nu:\R^k\to\G$, defined as $\nu(x):=(\nu(x)_{ij})_{ij}$, be any regular function such that $D_m\in\nu(W)$. Define $h'_r:\R^k\times\R^{m+n}\times\R\to \R$ as $h'_r:=h_r\circ(\nu\times\id_{\R^{m+n}}\times\id_{\R})$ for every $r\in\{1,\dots,m+n+1\}$. Thanks to the proof of \cite[Lemma\,2.31]{GSa} and being $W$ nonsingular of dimension $d$, we get that the rank of the Jacobian matrix of the map $$(x,y,t)\mapsto(p_1(x),\dots,p_s(x),h'_1(x,y,t),\dots,h'_{m+n+1}(x,y,t))$$ at every $(a,v,c)\in\nu^*(\E^*)$ such that $\nu(a)=D_m$ is $\geq k-d+n+1$, hence equal to $k-d+n+1$. Indeed, denote by $\nu_r:\R^{m+n}\to\R$ be the regular map defined as $\nu_r(x):=(\nu_{r1}(x),\dots,\nu_{r\,m+n}(x))$, that is, the map associated to the $r$-th row of $\nu$. Then we have:
\begin{align*}
\nabla p_i(a,v,c)&=(\nabla p_i(a),0,0)\quad\text{for every $i=1,\dots,s$};\\
\nabla h'_r(a,v,c)&=\Big(\nabla \nu_r(a)\cdot v^T,-e_r,0\Big)\quad\text{if $r\in\{m+1,\dots,m+n\}$},\\
\nabla h_{m+n+1}(a,v,c)&=\big(0,2v,2c-1\big),
\end{align*}
where $x=(x_1,\dots,x_k)$ and $\{e_1,\ldots,e_{m+n}\}$ denotes the canonical vector basis of $\R^{m+n}$.

Let us complete the proof. Let $(a,b,c)\in\mu^*(\E^*)$, let $G\in O(m+n)$ be such that $D_m=G^T\mu(a)G$ and let $v:=G^Tb$. By the choice of $G$ we see that $|v|^2_{m+n}=|G^T v|^2_{m+n}=|b|^2_{m+n}$, hence $c$ satisfies $|v|^2_{m+n}+c^2-c=0$ as well. Note that $D_m v=G^TAGG^Tb=G^TAb=G^Tb=v$, i.e., $(D_m,v,c)\in\nu^*(E^*)$ with $\nu:W\to\G$ defined as $\nu(a):=G^T\mu(a)G$. Define the regular function $\psi:\R^{k}\times\R^{m+n}\times\R\rightarrow\R^s\times\R^{m+n}\times\R$ by
\[
\psi(x,y,t):=(p_1(x),\dots,p_s(x),\nu(x)y-y,|y|^2_{m+n}+t^2-t),
\]
and the linear automorphism $\tau:\R^{m+n}\rightarrow\R^{m+n}$ by $\tau(y)=G^T y$. Since $(\id_{\R^k}\times\tau\times\id_\R)(a,b,c)=(a,v,c)$ and  $(\id_{\R^s}\times\tau\times\id_{\R})\circ\phi=\phi\circ(\id_{\R^k}\times\tau\times\id_\R)$ we have that
$J_{\id_{\R^s}\times\tau\times\id_{\R}}(\phi(a,b,c))J_\phi(a,b,c)=J_\psi(a,v,c)J_{\id_{\R^k}\times\tau\times\id_{\R}}(a,b,c)$. Since both matrices $J_{\id_{\R^s}\times\tau\times\id_{\R}}(\phi(a,v,c))$ and $J_{\id_{\R^k}\times\tau\times\id_{\R}}(a,b,c)$ are invertible and $\nu(a)=D_m$, it follows that $\mr{rnk}\,J_\phi(a,b,c)=\mr{rnk}\,J_\psi(a,v,c)= k-d+n+1$, as desired. 

Since $\mu^*(\E^*)\subset\R^{k}\times\R^{m+n}\times\R$ is the zero set of $|\phi(x,y,t)|_{s+m+n+1}^2\in\Q[x,y,t]$ and $\mu^*(\E^*)$ is contained in $W\times S^{m+n}$, which is a projectively $\Q$-closed algebraic subset of $\R^{k}\times\R^{m+n}\times\R$ by \cite[Lemmas\,2.31\,\&\,2.24(iv)]{GSa}, we have that $\mu^*(\E^*)$ is projectively $\Q$-closed in $\R^{k}\times\R^{m+n}\times\R$ as well by \cite[Lemma\,2.24(ii)]{GSa}.
\end{proof}

\vspace{1.5em}

\section{Homology of real embedded Grassmannians}\label{sec:2}

\vspace{0.5em}

\subsection{$\Q$-Desingularization of real embedded Schubert varieties}\label{subsec:2.1}

Let $\G_{m,n}\subset\R^{(m+n)^2}$ be the embedded Grassmannian manifold of $m$-dimensional vector subspaces of $\R^{m+n}$. Let us construct an embedded version of Schubert varieties inducing a cellular decomposition of $\G_{m,n}$. Consider the complete flag of $\R^{m+n}$ consisting of the strictly increasing sequence of each $\R^k$, with $k\leq m+n$, spanned by the first $k$ elements of the canonical basis of $\R^{m+n}$. That is:
\[
0\subset \R\subset\dots\subset\R^k\subset \dots\subset \R^{m+n}.
\]
We will refer to the previous complete flag as the \emph{canonical complete flag of $\R^{m+n}$}. Let us define the \emph{Schubert varieties} of $\G_{m,n}$ with respect to above complete flag by following the convention in \cite[\S 3]{Man01}. Define a \emph{partition} $\lambda=(\lambda_1,\dots,\lambda_m)$ as a decreasing sequence of integers such that $n\geq \lambda_1\geq\dots\geq\lambda_m\geq 0$. Hence, $\lambda$ corresponds uniquely to a Young diagram in a $(m\times n)$-rectangle. Denote by $D_\ell$ the $(m+n)^2$ matrix associated to the orthogonal projection $\R^{m+n}\to\R^\ell$ sending $(x_1,\dots,x_{m+n})\mapsto(x_1,\dots,x_\ell)$ with respect to the canonical basis of $\R^{m+n}$ for every $\ell\in\{1,\dots,m+n\}$. Hence, $D_\ell$ is the diagonal matrix in $\R^{(m+n)^2}$ having $1$ in the first $\ell$ diagonal positions and $0$ otherwise. Define the \emph{Schubert open cell} of $\G_{m,n}$ associated to $\lambda$ with respect to the canonical complete flag as
\[
\Omega_\lambda:=\big\{X\in\G_{m,n}\,|\,\mr{rnk}(XD_\ell)=k\quad\text{if $n+k-\lambda_k\leq \ell\leq n+k-\lambda_{k+1}$}\big\}.
\]
Define the \emph{Schubert variety} of $\G_{m,n}$ associated to the partition $\lambda$ with respect to the canonical complete flag as
\begin{equation}\label{eq:sch}
\sigma_\lambda:=\big\{X\in\G_{m,n}\,|\,\mr{rnk}(XD_{n+k-\lambda_k})\geq k, \text{ for } k=1,\dots,m\big\}.
\end{equation}

The partition $\lambda$ is uniquely determined and uniquely determines a sequence of incidence conditions  with respect to the above canonical complete flag of $\R^{m+n}$. In addition, the matrix $XD_\ell=(x'_{ij})_{i,j}\in\R^{(m+n)^2}$ satisfies the following relations with respect to $X=(x_{ij})_{i,j}\in\R^{(m+n)^2}$:
\[
x'_{ij}=x_{ij}\text{ if $j\leq \ell$ and }x'_{ij}=0\text{ otherwise.}
\]

Here we summarize some general properties of Schubert varieties translated in our embedded construction. For more details we refer to \cite[\S 6]{MS74}\,\&\,\cite[\S 3.2]{Man01}.

\vspace{0.5em}

\begin{lemma}\label{lem:schubert}
Let $\G_{m,n}\subset\R^{(m+n)^2}$ be an embedded Grassmannian manifold and let $\lambda$ be a partition of the  $(m\times n)$-rectangle. Let $\sigma_{\lambda}$ be the Schubert variety in $\G_{m,n}$ defined by the incidence conditions prescribed by $\lambda$ with respect to the canonical complete flag of $\R^{m+n}$. Then:
\begin{enumerate}[label=\emph{(\roman*)}, ref=(\roman*)]
\item\label{lem:schubert_1} $\sigma_{\lambda}$ is an algebraic subset of $\R^{(m+n)^2}$ and $\Omega_\lambda\subset\Reg(\sigma_{\lambda})$.
\item $\Omega_\lambda$ is biregular isomorphic to $\R^{mn-|\lambda|}$, where $|\lambda|:=\sum_{k=1}^{m} \lambda_k$.
\item $\sigma_\lambda$ coincides with the Euclidean closure of $\Omega_\lambda$.
\item $\sigma_\lambda=\bigcup_{\mu\geq\lambda} \Omega_\mu$, where $\mu\geq\lambda$ if and only if $\mu_k\geq \lambda_k$ for every $k\in\{1,\dots,m\}$.
\item $\sigma_\lambda\supset\sigma_\mu$ if and only if $\lambda\leq\mu$, where $\lambda\leq\mu$ means $\lambda_i\leq \mu_i$ for every $i\in\{1,\dots,m\}$.
\end{enumerate}
\end{lemma}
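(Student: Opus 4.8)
The plan is to deduce all five items from the classical structure theory of Schubert cells and varieties in Grassmannians (as in \cite[\S 6]{MS74} and \cite[\S 3.2]{Man01}), transported through the explicit identification \eqref{eq:GG} of $\G_{m,n}$ with the symmetric idempotent $(m+n)\times(m+n)$ matrices of trace $m$. The crux is a linear-algebra dictionary: if the matrix $X\in\G_{m,n}$ represents the $m$-plane $V\subseteq\R^{m+n}$, then $X$ is the orthogonal projection onto $V$, the columns of $XD_\ell$ span $X(\R^\ell)$, so $\mr{rnk}(XD_\ell)=\dim X(\R^\ell)=\ell-\dim(\R^\ell\cap V^\perp)$, and by symmetry of $X$ and $D_\ell$ also $\mr{rnk}(XD_\ell)=\mr{rnk}(D_\ell X)=\dim D_\ell(V)$. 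First I would record these identities and use them to rewrite the rank conditions of \eqref{eq:sch}, and the equalities defining $\Omega_\lambda$ on the nested intervals of $\ell$, purely in terms of the incidence numbers of $V$ with the canonical complete flag. This identifies $\Omega_\lambda$ (resp.\ $\sigma_\lambda$) with the classical Schubert cell (resp.\ Schubert variety) attached to $\lambda$ and that flag, after which (i)--(v) become transcriptions of standard facts.

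Concretely: items (iii) and (iv) are the classical cell decomposition $\G_{m,n}=\bigsqcup_\mu\Omega_\mu$ together with the description $\overline{\Omega_\lambda}=\bigsqcup_{\mu\geq\lambda}\Omega_\mu$ of the cells in the closure of $\Omega_\lambda$; (iii) is this statement and (iv) restates it, once the dictionary identifies the set cut out by the rank conditions of \eqref{eq:sch} with that closure. For (i), this closure, being a classical Schubert variety for the coordinate flag, is carved out in $\G_{m,n}$ by the vanishing of suitable minors of the matrices $XD_j$ (whose entries are integer-coefficient polynomials in the entries of $X$), hence $\sigma_\lambda$ is a real algebraic subset of $\R^{(m+n)^2}$; the inclusion $\Omega_\lambda\subseteq\Reg(\sigma_\lambda)$ then follows from the classical fact that a Schubert cell lies in the smooth locus of its closure, combined with (ii) and (iii). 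For (ii), the classical parametrization of a Schubert cell by the free entries of the reduced row-echelon basis matrix of $V$ becomes, in the matrix model, a regular map $\R^{mn-|\lambda|}\to\Omega_\lambda$: for a basis matrix $A$ in the echelon stratum prescribed by $\lambda$ one has $X=A(A^TA)^{-1}A^T$, with $A\mapsto(A^TA)^{-1}$ regular on that stratum; conversely the free echelon entries of a basis of $V$ are recovered from $X$ (e.g.\ by row-reducing $X^T$) by regular operations on $\Omega_\lambda$, so the map is a biregular isomorphism. Finally (v): if $\lambda\leq\mu$ then $\{\nu:\nu\geq\mu\}\subseteq\{\nu:\nu\geq\lambda\}$, so $\sigma_\mu\subseteq\sigma_\lambda$ by (iv); conversely, if $\sigma_\mu\subseteq\sigma_\lambda$, then $\Omega_\mu\subseteq\sigma_\lambda=\bigsqcup_{\nu\geq\lambda}\Omega_\nu$, and since the $\Omega_\nu$ are pairwise disjoint and $\Omega_\mu$ is one of them, necessarily $\mu\geq\lambda$, i.e.\ $\lambda\leq\mu$.

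The step I expect to be the main obstacle is not conceptual but a matter of careful bookkeeping: one must check that the particular thresholds $\mr{rnk}(XD_{n+k-\lambda_k})\geq k$, together with the cell equalities on the intervals $n+k-\lambda_k\leq\ell\leq n+k-\lambda_{k+1}$, match under the dictionary above the Schubert data for the partition $\lambda$ in the convention of \cite{Man01} --- keeping straight the passages between $V$ and $V^\perp$ and between ``the first $\ell$ coordinates'' and their complements --- and that the parametrization in (ii) is genuinely biregular in the matrix model (i.e.\ that recovering the echelon data of a basis of $V$ from the projection matrix $X$ is a regular operation on the cell), not merely a homeomorphism. Everything else is a direct transcription of \cite[\S 6]{MS74} and \cite[\S 3.2]{Man01}.
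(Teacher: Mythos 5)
Your overall strategy --- set up the linear-algebra dictionary between the projection-matrix model and incidence conditions with the coordinate flag, then transcribe the classical facts from \cite{MS74} and \cite{Man01} --- is exactly what the paper does: Lemma \ref{lem:schubert} is stated there without proof, as a ``translation'' of the classical theory, with a pointer to those same references. So in approach you coincide with the paper, and your treatment of (ii)--(v) is fine \emph{modulo} the identification of (\ref{eq:sch}) with the classical Schubert variety, which you defer to the end as ``careful bookkeeping''.

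That deferred step is not mere convention-matching; it is the one genuinely delicate point, and as it stands the identification fails for the set literally defined by (\ref{eq:sch}). Your own dictionary gives $\mr{rnk}(XD_j)=j-\dim(\R^j\cap V^\perp)=m-\dim\big(V\cap(\R^j)^\perp\big)$, so the condition $\mr{rnk}(XD_{n+k-\lambda_k})\geq k$ reads $\dim\big(V\cap\mr{span}(e_{n+k-\lambda_k+1},\dots,e_{m+n})\big)\leq m-k$: an upper bound, at its \emph{generic} value, on the intersection of $V$ with the \emph{opposite} flag. Being of the form $\mr{rnk}\geq k$, it is Zariski open by lower semicontinuity of rank, so it cannot cut out the closed Schubert variety; for instance with $m=n=1$ and $\lambda=(1)$ it defines $\G_{1,1}\setminus\{\mr{span}(e_2)\}$, whereas $\sigma_{(1)}$ must be the single point $\mr{span}(e_1)$. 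The correct closed incidence condition $\dim(V\cap\R^{n+k-\lambda_k})\geq k$ is obtained by running your dictionary through $X-\id_{\R^{m+n}}$ (whose kernel is $V$) rather than through $X$ (whose kernel is $V^\perp$): it reads $\mr{rnk}\big((X-\id_{\R^{m+n}})D_{n+k-\lambda_k}\big)\leq n-\lambda_k$, i.e.\ exactly the minor-vanishing description used in the proof of Lemma \ref{lem:Q-schubert} --- and the two rank conditions are \emph{not} equivalent, contrary to what the transition in that proof suggests. So before items (i) and (iii)--(v) can be proved you must replace the inequalities of (\ref{eq:sch}) by these complementary ones; once that substitution is made, your reduction to the classical cell decomposition, the closure order on cells, the echelon parametrization (which is indeed biregular in the projection-matrix model), and the smoothness of a cell in its closure all go through as you describe.
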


\vspace{0.5em}

The choice of the canonical complete flag allows as to obtain $\Q$-algebraic equations of Schubert varieties, as explained by next result.

\vspace{0.5em}

\begin{lemma}\label{lem:Q-schubert}
Let $\G_{m,n}\subset\R^{(m+n)^2}$ be a Grassmannian manifold and let $\lambda$ be a partition of the rectangle $m\times n$. Then the Schubert variety $\sigma_\lambda$ defined by the incidence conditions prescribed by $\lambda$ with respect to the canonical flag of $\R^{m+n}$ is a projectively $\Q$-closed $\Q$-algebraic subset of $\R^{(m+n)^2}$. 
\end{lemma}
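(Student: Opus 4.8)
The plan is to show that $\sigma_\lambda$, given by the rank conditions in \eqref{eq:sch}, is cut out (as a real algebraic set) by polynomials with rational coefficients, and then to verify projective $\Q$-closedness by exhibiting an overt polynomial over $\Q$ whose zero set is $\sigma_\lambda$. The starting observation is that $\G_{m,n}\subset\R^{(m+n)^2}$ is itself projectively $\Q$-closed by \cite[Lemmas\,2.18]{GSa} (or the equations in \eqref{eq:GG}, which have integer coefficients), so it suffices to intersect $\G_{m,n}$ with a $\Q$-algebraic set realizing the Schubert incidence conditions. The key point is that each matrix $D_\ell$ is the diagonal $0$-$1$ matrix associated to the \emph{canonical} flag, hence has integer entries; therefore the entries of $XD_\ell$ are, up to relabelling and insertion of zeros, a subset of the entries $x_{ij}$ of $X$. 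Consequently the rank condition $\mr{rnk}(XD_{n+k-\lambda_k})\ge k$ is equivalent to requiring that \emph{not all} $k\times k$ minors of $XD_{n+k-\lambda_k}$ vanish, i.e.\ $X$ lies in the complement of the zero set of all such minors — and all these minors are polynomials in the $x_{ij}$ with integer coefficients.

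The next step is to convert the open conditions ``rank $\ge k$'' into closed equations over $\Q$. Here I would invoke Lemma \ref{lem:schubert}: $\sigma_\lambda$ is the Euclidean closure of the open cell $\Omega_\lambda$, and also $\sigma_\lambda$ is an algebraic subset of $\R^{(m+n)^2}$ by Lemma \ref{lem:schubert}\ref{lem:schubert_1}. So I only need to check that the \emph{Zariski closure over $\Q$} of $\Omega_\lambda$ coincides with $\sigma_\lambda$, equivalently that $\I_\Q(\Omega_\lambda)$ and $\I_\Q(\sigma_\lambda)$ have the same real zero set. Since $\sigma_\lambda$ is already known to be a (real) algebraic set, and since the defining data (the minors of $XD_\ell$, the entries of $X^T-X$, $X^2-X$, $\mr{tr}(X)-m$) are all defined over $\Q$, the $\Q$-Zariski closure of $\sigma_\lambda$ is a $\Q$-algebraic set whose real points contain $\sigma_\lambda$; conversely it is contained in the ($\R$-)Zariski closure of $\sigma_\lambda$, which is $\sigma_\lambda$ again because $\sigma_\lambda$ is algebraic and Euclidean-closed. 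Hence $\sigma_\lambda$ is $\Q$-algebraic. (Alternatively, and more concretely, one can argue that over $\overline\Q$ the Schubert variety is defined by the standard determinantal equations — vanishing of appropriate minors of $XD_\ell$ — with coefficients in $\Z$, and the Galois descent of \cite[\S 1]{FG} then gives the $\Q$-algebraicity of the real trace.)

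For projective $\Q$-closedness I would use \cite[Lemma\,2.15]{GSa}: $\sigma_\lambda$ is a $\Q$-algebraic subset of the projectively $\Q$-closed set $\G_{m,n}$, and a $\Q$-closed subset of a projectively $\Q$-closed $\Q$-algebraic set is again projectively $\Q$-closed (this is exactly the hereditary property recorded there, combined with the fact that $\G_{m,n}$ is projectively $\Q$-closed by \cite[Lemma\,2.18]{GSa}). Concretely: write $\sigma_\lambda=\Zz_\R(q)$ for some $q\in\Q[x]$ (sum of squares of the defining polynomials), add to it the overt polynomial over $\Q$ witnessing projective $\Q$-closedness of $\G_{m,n}$, and the sum is overt over $\Q$ with zero set $\sigma_\lambda$. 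The main obstacle I anticipate is the bookkeeping in the first two steps — namely, carefully matching the ``non-vanishing of some minor'' description of the open cell against the ``vanishing of all minors of the next size'' (or the closure) description of $\sigma_\lambda$, and making sure that passing to the Euclidean closure does not introduce components requiring irrational coefficients. The saving grace is precisely the choice of the canonical flag: because $D_\ell$ has $0$-$1$ entries, every polynomial that naturally appears — minors of $XD_\ell$, the ambient Grassmannian equations, the overt polynomial — already lives in $\Z[x]\subset\Q[x]$, so no base field larger than $\Q$ is ever forced upon us.
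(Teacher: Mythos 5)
Your proof has a genuine gap at its core. To show $\sigma_\lambda$ is $\Q$-algebraic, you must exhibit polynomials in $\Q[x]$ whose common real zero set is $\sigma_\lambda$. The obstacle, which you correctly identify, is that the defining conditions $\mr{rnk}(XD_{n+k-\lambda_k})\geq k$ are \emph{open} (non-vanishing of some minor). Your main route around this is the claim that the $\Q$-Zariski closure of $\sigma_\lambda$ ``is contained in the ($\R$-)Zariski closure of $\sigma_\lambda$''. This is backwards. The $\Q$-Zariski topology is \emph{coarser} than the $\R$-Zariski one, so it has fewer closed sets; consequently the $\Q$-Zariski closure of any set \emph{contains} (is generally strictly larger than) its $\R$-Zariski closure (compare $\{\sqrt2\}\subset\R$, whose $\R$-Zariski closure is itself but whose $\Q$-Zariski closure is $\{\pm\sqrt2\}$). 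So the containment $\overline{\sigma_\lambda}^{\,\Q\text{-Zar}}\subset\sigma_\lambda$ is exactly what you are asked to prove, and your argument assumes it. The fact that the ``defining data'' lives over $\Q$ does not help here, because that data consists of \emph{inequalities} (non-vanishing of minors), not polynomial equations.

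What the paper does instead — and what is genuinely missing from your write-up — is a concrete conversion of the open rank conditions into closed ones, using the special structure of the embedding: $X$ is an orthogonal projection onto the $m$-plane $W$, so $\ker(X-\id_{\R^{m+n}})=W$. This lets one trade a lower bound on $\mr{rnk}(XD_\ell)$ for an \emph{upper} bound on $\mr{rnk}\bigl((X-\id_{\R^{m+n}})D_\ell\bigr)$, which \emph{is} a closed condition: vanishing of all $(n-\lambda_k+1)\times(n-\lambda_k+1)$ minors of $(X-\id_{\R^{m+n}})D_{n+k-\lambda_k}$. Since $\id_{\R^{m+n}}$ and $D_\ell$ have integer entries, these minors lie in $\Q[x]$, and together with the Grassmannian equations this gives an explicit $\Q$-algebraic description of $\sigma_\lambda$. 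Your ``alternative'' route (Galois descent, determinantal equations) points in roughly this direction, but you write ``vanishing of appropriate minors of $XD_\ell$'', which is not the right object: vanishing minors of $XD_\ell$ impose rank \emph{upper} bounds on $XD_\ell$, whereas the Schubert conditions are rank \emph{lower} bounds, and it is precisely the passage to $X-\id$ that resolves the mismatch. Without that step the argument does not close.

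Your treatment of projective $\Q$-closedness is correct and identical to the paper's: once $\sigma_\lambda$ is known to be a $\Q$-algebraic subset of the projectively $\Q$-closed set $\G_{m,n}$, \cite[Lemma\,2.15(b)]{GSa} applies directly.
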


\begin{proof}
We want to prove that $\sigma_\lambda$ is $\Q$-algebraic, namely we prove that conditions in (\ref{eq:sch}) are $\Q$-algebraic. Recall that $X\in\G_{m,n}$ is the matrix of the orthogonal projection of $\R^{m+n}$ onto an $m$-dimensional subspace $W$ of $\R^{m+n}$, hence $\ker(X-\id_{\R^{m+n}})=W$. This means that upper conditions on $\mr{rnk}(XD_\ell)$ correspond to lower conditions on $\mr{rnk}((X-\id_{\R^{m+n}})D_\ell)$, in particular for every $k\in\{1,\dots,m\}$ it holds:
\[
\mr{rnk}(XD_{n+k-\lambda_k})\geq k\quad\text{if and only if}\quad\mr{rnk}((X-\id_{\R^{m+n}})D_{n+k-\lambda_k})\leq n-\lambda_k.
\]
Latter condition is algebraic since it corresponds to the vanishing of the determinant of all $(n-\lambda_k+1)\times(n-\lambda_k+1)$-minors of the matrix $(X-\id_{\R^{m+n}})D_{n+k-\lambda_k}$. In particular, since $\G_{m,n}\subset\R^{(m+n)^2}$ is $\Q$-algebraic, $\id_{\R^{m+n}}$ and $D_{n+k-\lambda_k}$ are matrices with rational coefficients and the determinant is a polynomial with rational coefficients with respect to the entries of the matrix $X$, the algebraic set $\sigma_\lambda$ is $\Q$-algebraic. In addition, since $\G_{m,n}$ is projectively $\Q$-closed, $\sigma_\lambda$ is projectively $\Q$-closed as well by \cite[Lemma\,2.24(ii)]{GSa}.
\end{proof}

Let us introduce the notion of $\Q$-desingularization of a $\Q$-algebraic set $V\subset \R^n$.

\vspace{0.5em}

\begin{definition}\label{def:Q_des}
Let $V\subset \R^m$ be a $\Q$-algebraic set of dimension $d$. We say that $V'\subset \R^m\times\R^n$, for some $n\in\N$, is a \emph{desingularization} of $V$ if $V'\subset\R^{m+n}$  is a nonsingular algebraic set of dimension $d$ and $\pi|_{V'}:V'\to V$ is a birational map, where $\pi:\R^m\times\R^n\to\R^m$ is the projection onto the first factor. If, in addition, $V'\subset\R^{m+n}$ is a $\Q$-nonsingular projectively $\Q$-closed $\Q$-algebraic set we say that $V'$ is a \emph{$\Q$-desingularization} of $V$. 
\end{definition}

\vspace{0.5em}

It is very well-known since Demazure \cite{Dem74} that Bott-Samelson method produces a desingularization of Schubert varieties. This algorithm has been deeply studied in literature, for instance Zelevinsky \cite{Zel83} proved that a precise choice on the order of the subvarieties to blow-up produces a small resolution. The goal of this section is to show that the Bott-Samelson method produces $\Q$-desingularizations of embedded Schubert varieties defined by incidence conditions with respect to the canonical complete flag of $\R^{m+n}$, that is, to prove the next result.

\vspace{0.5em}

\begin{theorem}\label{thm:Q-desingularization}
Let $\G_{m,n}\subset \R^{(m+n)^2}$ be a Grassmannian manifold and let $\sigma_{\lambda}$ be any Schubert variety of $\G_{m,n}$ defined by incidence conditions, prescribed by $\lambda$, with respect to the canonical complete flag $0\subset\R\subset\R^2\subset\dots\subset\R^{m+n}$ of $\R^{m+n}$. 
Then, $\sigma_{\lambda}$ admits a $\Q$-desingularization.
\end{theorem}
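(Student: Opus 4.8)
The plan is to take the classical Bott--Samelson resolution of $\sigma_\lambda$, but to realize it \emph{by hand} inside an explicit product of embedded Grassmannians, so that using the canonical ($\Q$-rational) flag forces every equation in sight to have rational coefficients. Put $a_k:=n+k-\lambda_k$ for $k=1,\dots,m$; since $\lambda$ is decreasing one has $1\le a_1<a_2<\dots<a_m\le m+n$, and $\sigma_\lambda$ is the set of $m$-planes $W$ of $\R^{m+n}$ with $\dim(W\cap F_{a_k})\ge k$ for all $k$, where $F_\ell:=\im D_\ell$. I would set
\[
Z:=\Bigl\{(X_1,\dots,X_m)\in\textstyle\prod_{k=1}^{m}\G_{k,m+n-k}\ :\ X_kX_{k-1}=X_{k-1}\ \,(2\le k\le m),\ \ D_{a_k}X_k=X_k\ \,(1\le k\le m)\Bigr\},
\]
where $\G_{k,m+n-k}\subset\R^{(m+n)^2}$ is the embedded Grassmannian \eqref{eq:GG} and $X_k$ is the orthogonal projection onto a $k$-plane $V_k\subseteq\R^{m+n}$; the two families of relations express exactly the incidences $V_{k-1}\subseteq V_k$ and $V_k\subseteq F_{a_k}$. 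Each such equation has coefficients in $\{0,1\}\subset\Q$ — this is the whole point of working with the canonical flag, since the $D_{a_k}$ and the nesting relations are defined over $\Q$. Reordering the factors so that $X_m$ occupies the first copy of $\R^{(m+n)^2}$, we view $Z\subset\R^{(m+n)^2}\times\R^{(m-1)(m+n)^2}$, and the candidate $\Q$-desingularization morphism is $\pi|_Z$, where $\pi$ is the projection onto the first factor, $\pi(X_m,X_1,\dots,X_{m-1})=X_m$.

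\textbf{Birationality, dimension, projective $\Q$-closedness.} Forgetting $X_k$ exhibits the $k$-th floor $Z_k:=\{(X_1,\dots,X_k):X_iX_{i-1}=X_{i-1},\,D_{a_i}X_i=X_i\}$ as a $\PP^{a_k-k}(\R)$-bundle over $Z_{k-1}$, namely the projectivization of the bundle whose fibre over $V_{k-1}$ is $F_{a_k}\cap V_{k-1}^\perp$ (this is well defined because $V_{k-1}\subseteq F_{a_{k-1}}\subseteq F_{a_k}$, so the fibre has constant dimension $a_k-k+1$). Hence $Z=Z_m$ is a compact connected nonsingular real algebraic set of dimension $\sum_{k=1}^m(a_k-k)=\sum_{k=1}^m(n-\lambda_k)=mn-|\lambda|=\dim\sigma_\lambda$. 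Over $\Omega_\lambda$ the jumps of $\ell\mapsto\dim(W\cap F_\ell)$ occur exactly at $\ell=a_1,\dots,a_m$, which forces $V_k=W\cap F_{a_k}$; thus $\pi|_Z$ restricts to a biregular isomorphism between the dense open sets $\pi^{-1}(\Omega_\lambda)$ and $\Omega_\lambda$, and $\pi(Z)=\sigma_\lambda$ since any $W$ with $\dim(W\cap F_{a_k})\ge k$ admits a nested chain $V_1\subset\dots\subset V_m=W$ with $V_k\subseteq F_{a_k}$ (built floor by floor). This yields birationality. Finally each $\G_{k,m+n-k}\subset\R^{(m+n)^2}$ is projectively $\Q$-closed by \cite[Lemma\,2.18]{GSa}, hence so is the product, and $Z$, cut out in it by polynomials over $\Q$, is a $\Q$-algebraic closed subset, so projectively $\Q$-closed by \cite[Lemma\,2.15]{GSa}.

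\textbf{$\Q$-nonsingularity.} Here the explicit Bott--Samelson tower, rather than a black-box resolution of singularities, is indispensable: one must verify the $\R|\Q$-Jacobian criterion of \cite[Theorem\,4.1.7]{FG} at \emph{every} real point of $Z$, and this is genuinely embedding-dependent (cf.\ Example \ref{ex:R|Q-reg}). I would induct along the floors. The base $Z_0$ is a single $\Q$-rational point, trivially $\Q$-nonsingular. For the step I would prove a projective-bundle analogue of Lemmas \ref{lem:Q_sphere_bundle} and \ref{lem:Q_sphere_pullback}: if $B$ is a projectively $\Q$-closed $\Q$-nonsingular $\Q$-algebraic set and $\nu\colon B\to\G_{r,m+n-r}$ is $\Q$-regular, then the pulled-back projective bundle $\nu^*\bigl(\{(X,L):XL=L\}\subset\G_{r,m+n-r}\times\G_{1,m+n-1}\bigr)$ is again projectively $\Q$-closed and $\Q$-nonsingular; the proof is the same $\R|\Q$-Jacobian computation as in Lemma \ref{lem:Q_sphere_bundle}, with $O(m+n)$ acting transitively on the relevant configurations so as to reduce the rank count to a single normal-form point $X=D_r$, $L=\Span(e_1)$, the sphere equation $|y|^2+t^2=t$ being replaced by the incidence $XL=L$. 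Applying this with $B=Z_{k-1}$ and the classifying map $\nu_k(X_1,\dots,X_{k-1}):=D_{a_k}-X_{k-1}$ — a polynomial map over $\Q$ which, by the remark above, takes values in the Grassmannian of $(a_k-k+1)$-planes — identifies $Z_k$ with $\nu_k^*(\text{universal }\PP^{a_k-k}\text{-bundle})$ and propagates $\Q$-nonsingularity from $Z_{k-1}$ to $Z_k$; after $m$ steps, $Z=Z_m$ is $\Q$-nonsingular. (Alternatively one bypasses the auxiliary lemma and applies the $\R|\Q$-Jacobian criterion directly to the full system of $\Q$-equations of $Z$, again normalizing by the $O(m+n)$-action; the combinatorics of $\lambda$ governs which point is normal and the exact gradient count.)

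\textbf{Main obstacle.} All of the above except $\Q$-nonsingularity is bookkeeping given the properties of projectively $\Q$-closed and $\Q$-nonsingular sets recalled from \cite{FG,GSa}. The delicate point is the uniform $\R|\Q$-Jacobian verification: one must ensure that the particular rational polynomials chosen on each floor (the Grassmann relations, the nestings $X_kX_{k-1}=X_{k-1}$, and the incidences $D_{a_k}X_k=X_k$) certify regularity at \emph{all} real points of $Z$, and in particular that each classifying map $\nu_k=D_{a_k}-X_{k-1}$ has constant rank $a_k-k+1$ on $Z_{k-1}$ — which is exactly where the combinatorial constraint $a_{k-1}<a_k$, i.e.\ $\lambda_{k-1}\ge\lambda_k$, forcing $V_{k-1}\subseteq F_{a_{k-1}}\subseteq F_{a_k}$, is used. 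Making this constant-rank/transitive-normalization package run cleanly and uniformly in $\lambda$ is the crux.
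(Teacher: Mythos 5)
Your proposal is correct and follows essentially the same route as the paper: an explicitly embedded Bott--Samelson tower of Grassmannian bundles cut out by $\Q$-rational nesting and incidence equations relative to the canonical flag, birational onto $\sigma_\lambda$ via the open cell, with $\Q$-nonsingularity certified by the $\R|\Q$-Jacobian criterion after normalizing by a flag-adapted element of $O(m+n)$. The only difference is granularity: you add one dimension per floor ($m$ projective-space bundles), whereas the paper groups equal rows of $\lambda$ and uses one Grassmannian-bundle floor per depression (at most $\min\{m,n\}$ floors), handling the degenerate cases $a_c=0$ or $b_0=0$ separately in Remark \ref{rem:Q-grass}.
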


\vspace{0.5em}

Previous desingularization theorem will play a crucial role in Lemma \ref{lem:Q_tico_bordism} that constitutes a preliminary construction used in the proof of the relative Nash-Tognoli theorem `over $\Q$', namely Theorem \ref{thm:Q_tico_tognoli} below. Let us provide a complete proof of Theorem \ref{thm:Q-desingularization}.

Let $m,n\in\N^*:=\N\setminus\{0\}$.  Let $\lambda=(\lambda_1,\dots,\lambda_m)$ be a partition together with its associated Young diagram in a $(m\times n)$-rectangle. Then, there are $c\in \N^*$,  $a_1,\dots,a_{c-1}\in\N^*$, $a_c,b_0\in\N$ and $b_1,\dots,b_{c-1}\in\N^*$, uniquely determined by $\lambda$, such that:
\begin{enumerate}[label=(\alph*),ref=(\alph*)]
\item\label{en:a} $a_1+\dots+a_c=m\quad\text{and}\quad b_0+\dots+b_{c-1}=n$,
\item\label{en:b} $\lambda_j=\sum_{k=i}^{c} b_k$ for every $j\leq a_i$ and for every $i=1,\dots,c$.
\end{enumerate}
The interpretation of the previous integers with respect to the Young diagram associated to the partition $\lambda$ is explained in Figure \ref{fig:partition}.

\begin{figure}[h!]
\centering

\tikzset{every picture/.style={line width=0.75pt}} 

\begin{tikzpicture}[x=0.75pt,y=0.75pt,yscale=-1,xscale=1]

\draw   (192,41.81) -- (470,41.81) -- (470,237.81) -- (192,237.81) -- cycle ;
\draw [color={rgb, 255:red, 208; green, 2; blue, 27 }  ,draw opacity=1 ][line width=0.75]    (470,41.81) -- (470,59.81) ;
\draw [color={rgb, 255:red, 208; green, 2; blue, 27 }  ,draw opacity=1 ][line width=0.75]    (470,59.81) -- (430,59.81) ;
\draw [color={rgb, 255:red, 208; green, 2; blue, 27 }  ,draw opacity=1 ][line width=0.75]  [dash pattern={on 0.84pt off 2.51pt}]  (370,100.81) -- (350,100.81) ;
\draw [color={rgb, 255:red, 208; green, 2; blue, 27 }  ,draw opacity=1 ][line width=0.75]    (430,59.81) -- (430,67.81) -- (430,80.81) ;
\draw [color={rgb, 255:red, 208; green, 2; blue, 27 }  ,draw opacity=1 ][line width=0.75]  [dash pattern={on 0.84pt off 2.51pt}]  (290,150.81) -- (290,159.81) -- (290,171.81) ;
\draw [color={rgb, 255:red, 208; green, 2; blue, 27 }  ,draw opacity=1 ][line width=0.75]    (192,220.91) -- (230,220.72) ;
\draw [color={rgb, 255:red, 208; green, 2; blue, 27 }  ,draw opacity=1 ][line width=0.75]    (290,120.81) -- (290,150.81) ;
\draw [color={rgb, 255:red, 208; green, 2; blue, 27 }  ,draw opacity=1 ][line width=0.75]  [dash pattern={on 0.84pt off 2.51pt}]  (321,120.81) -- (341,120.81) ;
\draw [color={rgb, 255:red, 208; green, 2; blue, 27 }  ,draw opacity=1 ][line width=0.75]    (290,120.81) -- (321,120.81) ;
\draw [color={rgb, 255:red, 208; green, 2; blue, 27 }  ,draw opacity=1 ][line width=0.75]    (192,237.81) -- (192,220.91) ;
\draw [color={rgb, 255:red, 208; green, 2; blue, 27 }  ,draw opacity=1 ][line width=0.75]  [dash pattern={on 0.84pt off 2.51pt}]  (250,201.72) -- (258,201.72) -- (270,201.72) ;
\draw [color={rgb, 255:red, 208; green, 2; blue, 27 }  ,draw opacity=1 ][line width=0.75]    (430,80.81) -- (410,80.81) ;
\draw [color={rgb, 255:red, 208; green, 2; blue, 27 }  ,draw opacity=1 ][line width=0.75]    (410,80.81) -- (410,99.81) ;
\draw [color={rgb, 255:red, 208; green, 2; blue, 27 }  ,draw opacity=1 ][line width=0.75]    (410,99.81) -- (370,100.81) ;
\draw [color={rgb, 255:red, 208; green, 2; blue, 27 }  ,draw opacity=1 ][line width=0.75]    (230,201.72) -- (230,220.72) ;
\draw [color={rgb, 255:red, 208; green, 2; blue, 27 }  ,draw opacity=1 ][line width=0.75]    (230,201.72) -- (250,201.72) ;

\draw (463,25) node [anchor=north west][inner sep=0.75pt]    {$b_{0}$};
\draw (472,47) node [anchor=north west][inner sep=0.75pt]    {$a_{1}$};
\draw (442,43) node [anchor=north west][inner sep=0.75pt]    {$b_{1}$};
\draw (432,65) node [anchor=north west][inner sep=0.75pt]    {$a_{2}$};
\draw (300,103) node [anchor=north west][inner sep=0.75pt]    {$b_{i-1}$};
\draw (293,131) node [anchor=north west][inner sep=0.75pt]    {$a_{i}$};
\draw (195,224) node [anchor=north west][inner sep=0.75pt]    {$a_{c}$};
\draw (198,203) node [anchor=north west][inner sep=0.75pt]    {$b_{c-1}$};
\draw (482,140) node [anchor=north west][inner sep=0.75pt]    {$m$};
\draw (325,250) node [anchor=north west][inner sep=0.75pt]    {$n$};

\end{tikzpicture}
\caption{Disposition of the $a_i$'s and $b_i$'s with respect to the partition $\lambda$.}\label{fig:partition}
\end{figure}

\begin{remark}\label{rem:Q-grass}
Let $m',n'\in\N$ such that $m'\leq m$ and $n'\leq n$. Consider the Schubert variety of $\G_{m,n}$ associated to the partition $\lambda=(\lambda_1,\dots,\lambda_m)$ defined as:
\[
\lambda_k=
\begin{cases}
n \quad&\text{if } k\leq m-m',\\
n-n' &\text{if } k> m-m'.
\end{cases}
\]
If $m=m'$ and $n=n'$ the Schubert variety $\sigma_\lambda$ corresponds to the whole $\G_{m,n}$, otherwise $\sigma_\lambda$ is given by the equations:
\[
\sigma_\lambda=\{X\in\G_{m,n}\,|\,\mr{rnk}(XD_{m-m'})=m-m',\,\mr{rnk}(XD_{m+n'})=m\}.
\]
Observe that $\sigma_\lambda$ is biregular isomorphic to $\G_{m',n'}$. Actually, by our choice of the canonical complete flag of $\R^{m+n}$, $\sigma_\lambda$ is $\Q$-biregular isomorphic to $\G_{m',n'}$. Indeed, define the $\Q$-biregular isomorphism  $\varphi:\G_{m',n'}\to\sigma_\lambda\subset\G_{m,n}$ as follows: let $X':=(x'_{ij})_{i,j=1,\dots, m'+n'}$, then $\varphi(X')=(x_{i,j})_{i,j=1,\dots, m+n}$ with
\[
x_{ij}=
\begin{cases}
1\quad\quad\text{if $i=j$ and $i\leq m-m'$;}\\
x'_{st}\quad\text{if $m-m'<i,j<m+n'$ and $s=i-m+m'$, $t=j-m+m'$;}\\
0\quad\quad\text{otherwise.}
\end{cases}
\]
Recall that $\G_{m',n'}\subset\R^{(m'+n')^2}$ is a projectively $\Q$-closed $\Q$-nonsingular $\Q$-algebraic set. Let $\textnormal{graph}(\varphi)\subset\G_{m',n'}\times\G_{m,n}$ be the graph of $\varphi$. Then, $\textnormal{graph}(\varphi)\subset\R^{(m'+n')^2+(m+n)^2}$ is a $\Q$-algebraic set contained in $\G_{m',n'}\times\G_{m,n}$, hence projectively $\Q$-closed by \cite[Lemma\,2.24(ii)]{GSa} and $\Q$-nonsingular since $\varphi$ is a $\Q$-biregular isomorphism. Thus, $\textnormal{graph}(\varphi)\subset\R^{(m'+n')^2+(m+n)^2}$ is a $\Q$-desingularization of $\sigma_{\lambda}$. 
\end{remark}

\vspace{0.5em}

By the above Remark \ref{rem:Q-grass} we are left to find $\Q$-desingularizations of Schubert varieties $\sigma_\lambda$ of $\G_{m,n}$ defined by incidence conditions with respect to the canonical complete flag such that $a_c$ and $b_0$ are non-null. Indeed, if $\lambda$ is a partition with $a_c,b_0=0$, then $\sigma_\lambda$ is $\Q$-biregularly isomorphic to a Schubert variety $\sigma_{\mu}$ of $\G_{m-a_1,n-b_{c-1}}$ with $\mu_i:=\lambda_{i+a_1}-b_{c-1}$ for every $i\in\{1,\dots,m-a_1\}$. Hence, $\mu_{1}:=\lambda_{1+a_1}-b_{c-1}=n-b_1-b_{c-1}<n-b_{c-1}$ and $\mu_{m-a_1}:=b_{c-1}-b_{c-1}=0$, as desired.

We define the \emph{depressions} of the partition $\lambda$, with $a_c,b_0>0$, as the elements of the Young diagram whose coordinates, with respect to the upper corner on the left, are:
\[
(a_1+\dots+a_{i}+1,b_i+\dots+b_{c-1}+1),\quad i=1,\dots,c-1.
\]
Here we provide an inductive desingularization of the Schubert variety $\sigma_{\lambda}$ with respect to the number $c-1\in\N$ of depressions of the partition $\lambda$.

In next result we translate to our real embedded setting the Bott-Samelson desingularization technique in \cite{Dem74,Zel83}.

\vspace{0.5em}

\begin{lemma}\label{lem:des}
Let $\lambda$ be a partition of the $(m\times n)$-rectangle such that $a_c$ and $b_0$ are non-null. Let $\sigma_{\lambda}$ be the Schubert variety of $\G_{m,n}$ defined by incidence conditions, prescribed by $\lambda$, with respect to the canonical complete flag of $\R^{(m+n)^2}$. Let $m_k:=\sum_{i=1}^{k} a_i$, $n_k:=m+n-m_k$ and $d_k:=m_k+\sum_{i=1}^{k}b_{i-1}$ for every $k=1,\dots,c$.

Then the embedded Bott-Samelson algebraic set:
\begin{align*}
Z_\lambda:=\{(X,Y_{c-1},\dots,Y_1)&\in \G_{m,n}\times\G_{m_{c-1},n_{c-1}}\times\dots\times\G_{m_1,n_1}\,|\\
Y_i D_{d_i}&=Y_i,\quad\text{for every $i=1,\dots,c-1$},\\
Y_{i+1} Y_{i}&=Y_{i},\quad\text{for every $i=1,\dots,c-2$},\\
XY_{c-1}&=Y_{c-1}\}.
\end{align*}
is a desingularization of $\sigma_{\lambda}$.
\end{lemma}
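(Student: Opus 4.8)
The plan is to realize $Z_\lambda$ as an iterated tower of Grassmann bundles --- which makes its smoothness, irreducibility and dimension transparent --- and then to prove that the first projection $\pi\colon Z_\lambda\to\G_{m,n}$, $(X,Y_{c-1},\dots,Y_1)\mapsto X$, is a birational morphism onto $\sigma_\lambda$; this is the ``embedded'' version of the classical Bott--Samelson resolution \cite{Dem74,Zel83,Man01}. First I would record the geometric dictionary: for orthogonal projection matrices $Y,Y'\in\G$ with images $W,W'$ one has $YD_\ell=Y\iff W\subseteq\R^\ell$ and $Y'Y=Y\iff W\subseteq W'$, so a point of $Z_\lambda$ is exactly a partial flag $V_1\subseteq V_2\subseteq\cdots\subseteq V_{c-1}\subseteq V$ in $\R^{m+n}$ with $\dim V_k=m_k$, $V_k\subseteq\R^{d_k}$ for $k\le c-1$, and $\dim V=m$ (writing $V=\im X$, $V_k=\im Y_k$, and $m_c:=m$, $d_c:=m+n$). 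Note $d_k-d_{k-1}=a_k+b_{k-1}>0$, so $\R^{d_1}\subsetneq\cdots\subsetneq\R^{d_c}=\R^{m+n}$.

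\emph{Smoothness, irreducibility, dimension.} Let $Z^{(1)}:=\{V_1\subseteq\R^{d_1}:\dim V_1=m_1\}\cong\G_{a_1,b_0}$ and, for $2\le k\le c$, let $Z^{(k)}$ be the variety of flags $(V_1,\dots,V_k)$ as above. The forgetful map $Z^{(k)}\to Z^{(k-1)}$ is the Grassmann bundle of $a_k$-planes in the rank-$(d_k-m_{k-1})$ quotient of the trivial rank-$d_k$ bundle by the tautological rank-$m_{k-1}$ subbundle, with fibre $\G_{a_k,\,b_0+\cdots+b_{k-1}}$ (equal to $\G_{a_c,n}$ when $k=c$). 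A Grassmann bundle over a smooth irreducible projective base is again smooth, irreducible and projective, so inductively $Z^{(c)}$ is; moreover the tautological map $Z^{(c)}\to\G_{m,n}\times\G_{m_{c-1},n_{c-1}}\times\cdots\times\G_{m_1,n_1}$ is a closed immersion with image the algebraic set $Z_\lambda$ (it is injective on points and, fibrewise, reduces to the matrix embedding \eqref{eq:GG} of a Grassmannian), whence $Z_\lambda$ is nonsingular. Summing fibre dimensions gives $\dim Z_\lambda=\sum_{k=1}^{c}a_k(b_0+\cdots+b_{k-1})$, and a direct computation from \ref{en:a}--\ref{en:b} (see Figure \ref{fig:partition}) yields $mn-|\lambda|=\sum_{k=1}^{c}a_k(b_0+\cdots+b_{k-1})$, which by Lemma \ref{lem:schubert} equals $\dim\sigma_\lambda=:d$.

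\emph{Birationality onto $\sigma_\lambda$.} By (the proof of) Lemma \ref{lem:Q-schubert}, the conditions \eqref{eq:sch} say that $X\in\sigma_\lambda$ iff $\dim(V\cap\R^{n+k-\lambda_k})\ge k$ for $k=1,\dots,m$. Given a flag $(V_1\subseteq\cdots\subseteq V)\in Z_\lambda$ and $k$, let $i$ be the block with $m_{i-1}<k\le m_i$; then $n+k-\lambda_k=(b_0+\cdots+b_{i-1})+k\le d_i$, so $\R^{n+k-\lambda_k}$ has codimension $m_i-k$ in $\R^{d_i}$, and since $V_i\subseteq\R^{d_i}$ has dimension $m_i$ and $V_i\subseteq V$, we get $\dim(V\cap\R^{n+k-\lambda_k})\ge\dim(V_i\cap\R^{n+k-\lambda_k})\ge m_i-(m_i-k)=k$; hence $\pi(Z_\lambda)\subseteq\sigma_\lambda$. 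As $Z_\lambda$ is projective, $\pi$ is proper, so $\pi(Z_\lambda)$ is closed and, containing the cell $\Omega_\lambda$, equals $\sigma_\lambda=\overline{\Omega_\lambda}$ (Lemma \ref{lem:schubert}). Finally, for $V\in\Omega_\lambda$ one has $\dim(V\cap\R^{d_j})=m_j$ for each $j\le c-1$ (a cell condition, since $d_j=n+m_j-\lambda_{m_j}$), so the only admissible choice is $V_j=V\cap\R^{d_j}$; these form a flag because $\R^{d_1}\subset\cdots\subset\R^{d_{c-1}}$, and $V\mapsto\big(X_V,X_{V\cap\R^{d_{c-1}}},\dots,X_{V\cap\R^{d_1}}\big)$ is a regular section of $\pi$ over $\Omega_\lambda$. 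Thus $\pi$ restricts to an isomorphism $\pi^{-1}(\Omega_\lambda)\xrightarrow{\,\sim\,}\Omega_\lambda$ between dense opens, i.e.\ $\pi|_{Z_\lambda}\colon Z_\lambda\to\sigma_\lambda$ is birational; combined with smoothness and $\dim Z_\lambda=d$, this is precisely the assertion (Definition \ref{def:Q_des}) that $Z_\lambda$ is a desingularization of $\sigma_\lambda$.

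\emph{Main obstacle.} The bundle-tower picture and the dimension bookkeeping are routine; the delicate point is the analysis over the Schubert cell --- verifying that the integers $(a_i,b_i,d_i)$ read off from $\lambda$ are exactly such that $\dim(V\cap\R^{d_j})=m_j$ for every $j$ and every $V\in\Omega_\lambda$, so that the intermediate subspaces are forced and recovered by a regular map. This is where the Bott--Samelson combinatorics enters (together with the hypothesis $a_c,b_0>0$, which puts the ``corner'' depressions of $\lambda$ in the interior of the diagram); a secondary, more formal, point is the identification of the equation-defined set $Z_\lambda$ with the abstract iterated bundle.
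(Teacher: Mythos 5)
Your proof is correct and takes essentially the same route as the paper: both realize $Z_\lambda$ as an iterated Grassmann-bundle tower (you unwind the tower; the paper packages it as an induction on the number $c-1$ of depressions via the fibration $\pi_1\colon Z_\lambda\to\G_{a_1,b_0}$ with fibre $Z_\mu$), and both obtain birationality from the fact that over the open cell $\Omega_\lambda$ the intermediate subspaces are forced to be $V_j=V\cap\R^{d_j}$. Your treatment is slightly more explicit in verifying the inclusion $\pi(Z_\lambda)\subseteq\sigma_\lambda$ and in the properness/density argument, which the paper leaves implicit, but the underlying ideas and dimension bookkeeping are the same.
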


\begin{proof}
Let us prove by induction on $c\in\N^*:=\N\setminus\{0\}$. Let $c=1$, that is $a_1,b_0>0$ and $\lambda$ has no depressions, so $\lambda$ is the null partition. Thus, $\sigma_{\lambda}=\G_{m,n}\subset\R^{(m+n)^2}$, which is a nonsingular algebraic set, thus there is nothing to prove.

Let $c>1$ and $\lambda$ be a partition with $c-1$ depressions such that $a_c,b_0>0$. Recall that the Schubert variety $\sigma_\lambda$ defined by the incidence conditions, prescribed by $\lambda$, with respect to the canonical complete flag of $\R^{m+n}$ is defined as:
\[
\sigma_\lambda=\{X\in\G_{m,n}\,|\,\mr{rnk}(XD_{d_k})\geq m_k\quad\text{for $k=1,\dots,c$.}\}
\]
Consider the algebraic set $Z_\lambda\subset\G_{m,n}\times\G_{m_{c-1},n_{c-1}}\times\dots \times\G_{m_1,n_1}$ as in the statement of Lemma \ref{lem:des}. Define $\pi_i:Z_\lambda\to \G_{m_{c-i},n_{c-i}}$ for $i\in\{1,\dots,c\}$ be the restriction over $Z_\lambda$ of the projection from $\G_{m,n}\times\G_{m_{c-1},n_{c-1}}\times\dots \times\G_{m_1,n_1}$ onto the $(c-i+1)$-component.

Observe $\pi_1(Z_\lambda)=\{Y_1\in\G_{m_1,n_1}\,|\,Y_1 D_{d_1}=Y_1\}$ is biregular isomorphic to $\G_{a_1,b_0}=\G_{m_1,d_1-m_1}$. Let $\mu$ be a partition of the $\big((m-m_1)\times n\big)$-rectangle defined as: $\mu=(\mu_1,\dots,\mu_{m-a_1})$ with
\[
\mu_k= \lambda_{k+a_1} \text{ for every $k=1,\dots,m-a_1$}.
\]
Then, for every $B_1\in\pi_1(Z_\lambda)$, we observe that $\pi_1^{-1}(B_1)$ is biregular isomorphic to the set $Z_\mu$. Indeed, define the biregular isomorphism $\phi:Z_\mu \to(\pi_1)^{-1}(D_{m_1})$ as follows: let $(A,B_{c-1},\dots,B_2)\in Z_\mu$, then define
\[
\phi(A,B_{c-1},\dots,B_2):=(\varphi(A),\varphi(B_{c-1}),\dots,\varphi(B_2),D_{m_1}),
\]
where $\varphi:\R^{(m-m_1+n)^2}\to \R^{(m+n)^2}$ is defined as $\varphi((x_{st})_{s,t})=(x'_{ij})_{i,j}$, with
\begin{align*}
x'_{ij}:=
\begin{cases}
1\quad \text{if $i=j$ and $i\leq m_1$},\\
x_{st}\,\,\text{if $m-m_1<i,j$, with $s=i-m+m_1$ and $t=j-m+m-1$},\\
0\quad\text{otherwise}.
\end{cases}
\end{align*}
Moreover, for every $B_1\in\pi_1(Z_\lambda)$, then $(\pi_1)^{-1}(B_1)$ is biregularly isomorphic to $(\pi_1)^{-1}(D_{m_1})$, indeed it suffices to chose $G\in O(m+n)$ such that $D_{m_1}=G^{T} B_1 G$ and apply $G$ to every factor of $(\pi_1)^{-1}(D_{m_1})$ to produce the wondered isomorphism. Observe that the partition $\mu$ has exactly $(c-2)$-depressions, indeed it is constructed by erasing the first depression $(a_1+1,n-b_0+1)$ of $\lambda$, thus by inductive assumption the algebraic set $Z_\mu$ is a desingularization of $\sigma_\mu$. In particular:
\[
\dim(Z_\mu)=\dim(\sigma_\mu)=\dim(\sigma_\lambda)-a_1b_0.
\]
Hence, $\pi_1:Z_\lambda\to\G_{a_1,b_0}$ is an algebraic fibre bundle of dimension $\dim(\sigma_\lambda)$, thus $Z_\lambda$ is a nonsingular algebraic subset of $\R^{(m+n)^2 c}$ of dimension $\dim(\sigma_\lambda)$. Moreover, $Z_\lambda$ is a desingularization of $\sigma_\lambda$ indeed, if $A\in\Omega_\lambda$, then $(A,B_{c-1},\dots,B_1)\in Z_\lambda$ if and only if $B_i=A D_{d_i}$ for every $i\in\{1,\dots,c-1\}$. Hence, the map $\pi_c:Z_\lambda\to \sigma_\lambda$ is birational by Lemma \ref{lem:schubert}\ref{lem:schubert_1}.
\end{proof}

By Remark \ref{rem:Q-grass}, in order to prove Theorem 	\ref{thm:Q-desingularization} we are only left to prove that the explicit equations of each embedded Bott-Samelson algebraic set $Z_\lambda$ of Lemma \ref{lem:des} actually describe the local smooth behavior of $Z_\lambda$ at any point as well. 


\vspace{0.5em}

\begin{lemma} \label{lem:Q-Z_{lambda}}
Each embedded Bott-Samelson algebraic set $Z_{\lambda}\subset\R^{(m+n)^2 c}$ as in Lemma \ref{lem:des} is a projectively $\Q$-closed $\Q$-nonsingular $\Q$-algebraic set.
\end{lemma}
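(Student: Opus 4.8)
The plan is to verify the three asserted properties in order; the first two are routine and the third is the heart of the matter. For $\Q$-algebraicity and projective $\Q$-closedness: the centers $D_{d_i}$ have entries in $\{0,1\}$ and every Grassmannian factor is cut out by polynomials with rational coefficients, so the finite system defining $Z_\lambda$ in Lemma \ref{lem:des} has rational coefficients and $Z_\lambda$ is the zero set of the sum of its squares, a polynomial with coefficients in $\Q$; since $Z_\lambda$ is contained in the product $\G_{m,n}\times\G_{m_{c-1},n_{c-1}}\times\dots\times\G_{m_1,n_1}$, which is projectively $\Q$-closed by \cite[Lemmas\,2.18\,\&\,2.15(d)]{GSa}, the $\Q$-algebraic set $Z_\lambda$ is projectively $\Q$-closed by \cite[Lemma\,2.15(b)]{GSa}. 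By Lemma \ref{lem:des}, $Z_\lambda$ is nonsingular of dimension $\delta_\lambda:=\dim\sigma_\lambda$, so only $\Q$-nonsingularity remains.

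Let $\phi$ denote the polynomial map collecting all the defining polynomials of $Z_\lambda$ from Lemma \ref{lem:des} (the equations $X^T-X$, $X^2-X$, $\mr{tr}(X)-m$ of each Grassmannian factor, together with $Y_iD_{d_i}-Y_i$, $Y_{i+1}Y_i-Y_i$ and $XY_{c-1}-Y_{c-1}$); its components have rational coefficients, lie in $\I_\Q(Z_\lambda)$, and satisfy $Z_\lambda=\mathcal{Z}_\R(|\phi|^2)$. Exactly as in the proofs of Lemma \ref{lem:Q_sphere_bundle} and Lemma \ref{lem:Q_sphere_pullback}, it suffices by the $\R|\Q$-Jacobian criterion \cite[Theorem\,4.1.7]{FG} (and because $Z_\lambda$ is already nonsingular of dimension $\delta_\lambda$) to prove that $\mr{rnk}\,J_\phi(p)\ge(m+n)^2c-\delta_\lambda$ for every $p\in Z_\lambda$. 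I would prove this by induction on $c$, mirroring the inductive scheme of the proof of Lemma \ref{lem:des}. For $c=1$ one has $Z_\lambda=\G_{m,n}$, which is $\Q$-nonsingular by \cite[Lemma\,2.18]{GSa}. For $c>1$ the key observation is that $\phi$ is equivariant under the diagonal conjugation action on $\R^{(m+n)^2c}$ of the block-diagonal orthogonal group $\Gamma:=O(d_1)\times O(d_2-d_1)\times\dots\times O\big((m+n)-d_{c-1}\big)$: every $G\in\Gamma$ preserves the canonical flag $\R^{d_1}\subset\dots\subset\R^{d_{c-1}}\subset\R^{m+n}$ and hence commutes with each $D_{d_i}$, so conjugation by $G$ maps $Z_\lambda$ into itself and intertwines $\phi$ with a linear automorphism of the target. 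Thus $\mr{rnk}\,J_\phi$ is constant along $\Gamma$-orbits, and since the $O(d_1)$-factor acts transitively on $\pi_1(Z_\lambda)=\{Y_1\in\G_{m_1,n_1}:Y_1D_{d_1}=Y_1\}$, every orbit meets the fiber $\pi_1^{-1}(D_{m_1})$; it is therefore enough to prove the rank bound at points $p=(X,Y_{c-1},\dots,Y_2,D_{m_1})$ of that fiber.

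At such a $p$, by the proof of Lemma \ref{lem:des}, $p$ is the image of a unique $\tilde p\in Z_\mu$ under the embedding built from the $\{0,1\}$-matrix map $\varphi$ of that proof, where $\mu$ has exactly $c-2$ depressions and (thanks to the canonical flag) $\varphi$ carries the defining flag levels of $Z_\mu$ to $D_{d_2},\dots,D_{d_{c-1}}$. I would then split the gradients of the components of $\phi$ at $p$ into three groups: (a) the gradients of the $\G_{m_1,n_1}$-equations at $D_{m_1}$ and of the \emph{linear} equation $Y_1D_{d_1}-Y_1$, which are supported in the $Y_1$-coordinate block and, by a direct computation of $T_{D_{m_1}}\G_{m_1,n_1}$, cut it transversally down to a subspace of dimension $a_1b_0=\dim\G_{a_1,b_0}$, hence have rank $(m+n)^2-a_1b_0$; (b) the gradients of the chain equations and of the pieces of the Grassmannian equations which — using that $Y_iD_{m_1}=D_{m_1}$ along the fiber — pin $X,Y_{c-1},\dots,Y_2$ to the coordinate-affine subspace $\mathrm{im}\,\varphi$, contributing rank $(c-1)\big((m+n)^2-(m+n-m_1)^2\big)$; and (c) the remaining equations, which pull back through $\varphi$ to the defining system of $Z_\mu$ and, after clearing cross-terms by triangular column operations as with the $\nabla\nu_r(a)\cdot v^T$ terms in the proof of Lemma \ref{lem:Q_sphere_pullback}, contribute rank $(m+n-m_1)^2(c-1)-\dim Z_\mu$ by the inductive hypothesis applied to $Z_\mu$. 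These three groups occupy pairwise complementary coordinate directions, so the ranks add; summing them and using $\dim Z_\lambda=\dim Z_\mu+a_1b_0$ (from the proof of Lemma \ref{lem:des}) yields $\mr{rnk}\,J_\phi(p)\ge(m+n)^2c-\delta_\lambda$, completing the induction and hence the proof.

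The hard part will be the bookkeeping in step (b)--(c): realising the explicit rational system $\phi$, at a standard-fiber point, as the superposition of the inductively controlled defining system of $Z_\mu$ (via the $\{0,1\}$-identification $\varphi$), the equations that cut $\G_{m_1,n_1}$ down to the base $\G_{a_1,b_0}$, and the equations pinning $X,Y_{c-1},\dots,Y_2$ to $\mathrm{im}\,\varphi$, and then checking that these three contributions are mutually transverse with the claimed ranks. This is exactly where the choice of the canonical complete flag — and with it the fact that all the centers $D_{d_i}$ and the identification $\varphi$ are given by $\{0,1\}$-matrices — is used in an essential way; the rest ($\R|\Q$-Jacobian criterion, $\Gamma$-equivariance, projective $\Q$-closedness) is formal scaffolding around it.
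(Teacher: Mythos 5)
Your proposal takes a genuinely different route. The paper is not inductive: it computes every gradient of the components of $\phi$ at the single canonical point $(D_m,D_{m_{c-1}},\dots,D_{m_1})$, counting them via the index families $S^{(k)}_1,\dots,S^{(k)}_4,T^{(k)}$ for all $k$, and transfers the bound to an arbitrary $(A,B_{c-1},\dots,B_1)\in Z_\lambda$ by choosing $G\in O(m+n)$ conjugating the entire chain to canonical form and invoking $(\psi^{\times(4c-2)})\circ\phi=\phi\circ(\psi^{\times c})$. You instead use the block-diagonal group $\Gamma$ to reduce only to the fiber $\pi_1^{-1}(D_{m_1})$ and then induct on $c$, mirroring Lemma \ref{lem:des}. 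Your $\Gamma$ is, in fact, the natural symmetry group here: the flag components $Y_kD_{d_k}-Y_k$ of $\phi$ are equivariant under $\psi(X)=G^TXG$ only when $G$ commutes with each $D_{d_k}$, i.e.\ when $G\in\Gamma$, and $\Gamma$ does not act transitively on $Z_\lambda$ (only on $\pi_1(Z_\lambda)$), so the reduction to a single point is not available — this is precisely what forces the induction, and it is also why the paper's equivariance claim, for an unrestricted $G\in O(m+n)$ that is not required to commute with the $D_{d_k}$'s, deserves a second look. The inductive bookkeeping is where your proposal remains a sketch: the flag identification $d_k=m_1+\tilde d_{k-1}$ and the rank arithmetic $(m+n)^2-a_1b_0 + (c-1)\big((m+n)^2-(m+n-m_1)^2\big) + (c-1)(m+n-m_1)^2-\dim Z_\mu = c(m+n)^2-\dim Z_\lambda$ are both correct, but the chain equations $Y_{k+1}Y_k-Y_k$ and the Grassmannian equations $Y_k^2-Y_k$ at a point of the fiber have gradients that genuinely mix the $Y_1$-block, the off-$\mathrm{im}\,\varphi$ directions, and the in-$\mathrm{im}\,\varphi$ directions of the higher blocks, so you would need to carry out the column operations you allude to before concluding that the three contributions span complementary subspaces. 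Doing so amounts to re-deriving the paper's one-shot gradient count, reorganised along the inductive structure of Lemma \ref{lem:des} — this arguably makes the role of $\Gamma$ and of the canonical flag more transparent, but it is not obviously shorter, and it is the step that still has to be checked.
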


\begin{proof}
By definition, $Z_\lambda$ is a $\Q$-algebraic subset of $\R^{(m+n)^2 c}$ defined by the following equations in the variables $X:=(x_{ij})_{i,j=1,\dots,m+n}$ and $Y_k:=(y^{(k)}_{ij})_{i,j=1,\dots,m+n}$, for $k=1,\dots,c-1$:
\begin{align*}
X&=X^T,\quad X^2=X,\quad \mr{tr}(X)=m;\\
Y_k&=Y_k^T,\quad Y_k^2=Y_k,\quad \mr{tr}(Y_k)=m_k\quad\text{for every $k=1,\dots,c-1$};\\
Y_k D_{d_k}&=Y_k \quad\text{for every $k=1,\dots,c-1$};\\
Y_{k+1}Y_{k}&=Y_{k}\quad\text{for every $k=1,\dots,c-2$};\\
XY_{c-1}&=Y_{c-1}.
\end{align*}
Let $\varphi_k:\R^{(m+n)^2 c} \to\R^{(m+n)^2}\times\R^{(m+n)^2}\times\R^{(m+n)^2}\times\R^{(m+n)^2}$, for every $k=1,\dots,c-2$, $\varphi_{c-1}:\R^{(m+n)^2 c}\to\R^{(m+n)^2}\times\R^{(m+n)^2}\times\R^{(m+n)^2}\times\R^{(m+n)^2}$ and $\varphi_{c}:\R^{(m+n)^2 c}\to\R^{(m+n)^2}\times\R^{(m+n)^2}$ be defined as:
\begin{align*}
\varphi_k(X,Y_{c-1},\dots,Y_1)&:=(Y_k-Y_k^T,Y_k^2-Y_k,Y_kD_{d_k}-Y_k,Y_{k+1}Y_{k}-Y_{k}),\\
\varphi_{c-1}(X,Y_{c-1},\dots,Y_1)&:=(Y_{c-1}-Y_{c-1}^T,Y_{c-1}^2-Y_{c-1},Y_{c-1}D_{d_{c-1}}-Y_{c-1}, XY_{c-1}-Y_{c-1}),\\
\varphi_{c}(X,Y_{c-1},\dots,Y_1)&:=(X-X^T,X^2-X).
\end{align*}
Define $\phi:\R^{(m+n)^2 c}\to(\R^{(m+n)^2}\times\R^{(m+n)^2}\times\R^{(m+n)^2}\times\R^{(m+n)^2}) ^{c-1}\times\R^{(m+n)^2}\times\R^{(m+n)^2}$ be the polynomial map:
\begin{align*}
\phi(X,Y_{c-1},\dots,Y_1):=(&\varphi_1(X,Y_{c-1},\dots,Y_1),\dots,\varphi_{c-1}(X,Y_{c-1},\dots,Y_1),\varphi_c(X,Y_{c-1},\dots,Y_1)).
\end{align*}
We prove that the polynomials $\mr{tr}(X)-m$, $\mr{tr}(Y^k)-m_k$, for every $k=1,\dots,c-1$, and the polynomial components of $\phi$ do suffice to describe the local structure of nonsingular points of $Z_{\lambda}$ in $\R^{(m+n)^2 c}$. Since these polynomials have coefficients in $\Q$ and their common zero set is $Z_{\lambda}$, bearing in mind that 
\[
\dim(Z_{\lambda})=\sum_{k=1}^{c} \dim(\G_{a_k,n-\sum_{i=k}^{c-1}b_k})=\sum_{k=1}^{c} a_k \left(n-\sum_{i=k}^{c-1}b_k\right)=\dim(\sigma_{\lambda}),
\]
it suffices to show that, for each $(A,B_{c-1},\dots,B_1)\in Z_{\lambda}$, the rank of the Jacobian matrix $J_\phi(A,B_{c-1},\dots,B_1)$ of $\phi$ at $(A,B_{c-1},\dots,B_1)$ is greater than or equal to (and hence equal to) 
\begin{align*}
c(m+n)^2-\dim(\sigma_{\lambda})&=\sum_{k=1}^{c}(m+n)^2-\dim(\G_{a_k,n-\sum_{i=k}^{c-1}b_k})\\
&=\sum_{k=1}^{c}(m+n)^2-a_k (d_k-m_k),
\end{align*}
i.e. $\mr{rnk}\,J_\phi(A,B_{c-1},\dots,B_1)\geq c(m+n)^2-\sum_{k=1}^{c}a_k (d_k-m_k)$ for all $(A,B_{c-1},\dots,B_1)\in Z_{\lambda}$.

First, we prove that $\mr{rnk}\,J_\phi(D_m,D_{m_{c-1}},\dots,D_{m_1})\geq c(m+n)^2-\dim(\sigma_\lambda)$ if $D_m=D_{m_c}$ and $D_{m_k}$ are the diagonal matrices in $\R^{(m+n)^2}$ having $1$ in the first $m_k$ diagonal positions and $0$ otherwise, for every $k=1,\dots,c$. Observe that $(D_m,D_{m_{c-1}},\dots,D_{m_1})\in Z_{\lambda}$ since $D_{m_{k+1}}D_{m_k}=D_{m_k}$, for every $k=1,\dots,c-1$.

For each $i,j\in\{1,\ldots,m+n\}$ and $k\in\{1,\dots,c\}$, define the polynomial functions $f^{(k)}_{ij},g^{(k)}_{ij},p^{(k)}_{ij},q^{(k)}_{ij}:\R^{(m+n)^2}\times\R^{(m+n)^2}\to\R$ by
\begin{align*}
f^{(c)}_{ij}(X,Y_{c-1},\dots,Y_1)&:=x_{ij}-x_{ji},\\ g^{(c)}_{ij}(X,Y_{c-1},\dots,Y_1)&:=\textstyle\big(\sum_{\ell=1}^n x_{i\ell}x_{\ell j}\big)-x_{ij},\\
f^{(k)}_{ij}(X,Y_{c-1},\dots,Y_1)&:=y^{(k)}_{ij}-y^{(k)}_{ji},\\ g^{(k)}_{ij}(X,Y_{c-1},\dots,Y_1)&:=\textstyle\big(\sum_{\ell=1}^n y^{(k)}_{i\ell}y^{(k)}_{\ell j}\big)-y^{k}_{ij},\quad\quad\quad\quad\quad\quad\quad\quad
\end{align*}
\begin{align*}
p^{(k)}_{ij}(X,Y_{c-1},\dots,Y_1)&:=
\begin{cases}
0\quad&\text{if $i,j\leq d_k=\sum_{\ell=1}^{k} (a_\ell+b_{\ell-1})$};\\
-y^{(k)}_{ij}&\text{otherwise},
\end{cases}\\
q^{(c)}_{ij}(X,Y_{c-1},\dots,Y_1)&:=y^{(c-1)}_{ij}-\sum_{\ell=1}^{m+n} x_{i\ell}y^{(c-1)}_{\ell j},\\
q^{(k)}_{ij}(X,Y_{c-1},\dots,Y_1)&:=y^{(k)}_{ij}-\sum_{\ell=1}^{m+n} y^{(k+1)}_{i\ell}y^{(k)}_{\ell j}\quad\text{with $k\neq 1,c$.}
\end{align*}
for all $(X,Y_{c-1},\dots,Y_1)=((x_{ij})_{i,j},(y^{(c-1)}_{ij})_{i,j},\dots,(y^{(1)}_{ij})_{i,j})\in\R^{(m+n)^2 c}$. It follows that
\begin{align*}
\phi(X,Y_{c-1},\dots,Y_1)=\Big(&(f^{(1)}_{ij}(X,Y_{c-1},\dots,Y_1))_{i,j},(g^{(1)}_{ij}(X,Y_{c-1},\dots,Y_1))_{i,j},\\
&(p^{(1)}_{ij}(X,Y_{c-1},\dots,Y_1))_{i,j},(q^{(2)}_{ij}(X,Y_{c-1},\dots,Y_1))_{i,j},\\
&\dots,\\
&(f^{(c-1)}_{ij}(X,Y_{c-1},\dots,Y_1))_{i,j},(g^{(c-1)}_{ij}(X,Y_{c-1},\dots,Y_1))_{i,j},\\
&(p^{(c-1)}_{ij}(X,Y_{c-1},\dots,Y_1))_{i,j},(q^{(c)}_{ij}(X,Y_{c-1},\dots,Y_1))_{i,j},\\
&(f^{(c)}_{ij}(X,Y_{c-1},\dots,Y_1))_{i,j},(g^{(c)}_{ij}(X,Y_{c-1},\dots,Y_1))_{i,j}\Big).
\end{align*}
Define, for every $k\in\{1,\dots,c\}$:
\begin{align*}
S^{(k)}_1&:=\{(i,j)\in\{1,\ldots,m+n\}^2\,|\,i<j\leq d_k\},\\
S^{(k)}_2&:=\{(i,j)\in\{1,\ldots,m+n\}^2\,|\,i\leq j\leq m_k\},\\
S^{(k)}_3&:=\{(i,j)\in\{1,\ldots,m+n\}^2\,|\,m_k<i\leq j\leq d_k\},\\
S^{(k)}_4&:=\{(i,j)\in\{1,\ldots,m+n\}^2\,|\,d_k< i \text{ or } d_k< j\},\\
T^{(1)}&:=\varnothing,\\
T^{(k)}&:=\{(i,j)\in\{1,\ldots,m+n\}^2\,|\,m_k<i\leq d_k,\, j\leq m_{k-1}\}.
\end{align*}
Notice that the sum of the cardinalities of $S^{(k)}_1$, $S^{(k)}_2$, $S^{(k)}_3$ and $S^{(k)}_4$ equals
\begin{align*}
\frac{(d_k-1)d_k}{2}+\frac{m_k(m_k+1)}{2}+&\frac{(d_k-m_k)(d_k-m_k+1)}{2}+(m+n)^2-d_k^2\\
&=(m+n)^2-m_k(d_k-m_k),
\end{align*}
for every $k\in\{1,\dots,c\}$. In particular, the sum of the cardinalities of $S^{(1)}_1$, $S^{(1)}_2$, $S^{(1)}_3$ and $S^{(1)}_4$ is equal to $a_1b_0$. In addition, the cardinality of $T^{(k)}$ is equal to $m_{k-1}(d_k-m_k)$, for every $k\in\{2,\dots,c\}$. Hence the sum of the cardinalities of $S^{(k)}_1$, $S^{(k)}_2$, $S^{(k)}_3$, $S^{(k)}_4$ and $T^{(k)}$ equals $(m+n)^2-a_k(d_k-m_k)$, for every $k\in\{2,\dots,c\}$.

By a direct computation, we see that
\[
\begin{array}{ll}
\nabla f^{(1)}_{ij}(D_m,D_{m_{c-1}},\dots,D_{m_1})=(0,\dots,0,E^{(1)}_{ij}-E^{(1)}_{ji}) & \text{ if $(i,j)\in S^{(1)}_1$,}\\
\nabla g^{(1)}_{ij}(D_m,D_{m_{c-1}},\dots,D_{m_1})=(0,\dots,0,E^{(1)}_{ij}) & \text{ if $(i,j)\in S^{(1)}_2$,}\\
\nabla g^{(1)}_{ij}(D_m,D_{m_{c-1}},\dots,D_{m_1})=(0,\dots,0,-E^{(1)}_{ij}) & \text{ if $(i,j)\in S^{(1)}_3$,}\\
\nabla p^{(1)}_{ij}(D_m,D_{m_{c-1}},\dots,D_{m_1})=(0,\dots,0,E^{(1)}_{ij}) & \text{ if $(i,j)\in S^{(1)}_4$,}\\
\end{array}
\]
and, for every $k\in\{2,\dots,c\}$
\[
\begin{array}{ll}
\nabla f^{(k)}_{ij}(D_m,D_{m_{c-1}},\dots,D_{m_1})=(0,\dots,0,E^{(k)}_{ij}-E^{(k)}_{ji},0,\dots,0) & \text{ if $(i,j)\in S^{(k)}_1$,}\\
\nabla g^{(k)}_{ij}(D_m,D_{m_{c-1}},\dots,D_{m_1})=(0,\dots,0,E^{(k)}_{ij},0,\dots,0) & \text{ if $(i,j)\in S^{(k)}_2$,}\\
\nabla g^{(k)}_{ij}(D_m,D_{m_{c-1}},\dots,D_{m_1})=(0,\dots,0,-E^{(k)}_{ij},0,\dots,0) & \text{ if $(i,j)\in S^{(k)}_3$,}\\
\end{array}
\]
\[
\begin{array}{ll}
\nabla p^{(k)}_{ij}(D_m,D_{m_{c-1}},\dots,D_{m_1})=(0,\dots,0,E^{(k)}_{ij},0,\dots,0) & \text{ if $(i,j)\in S^{(k)}_4$,}\\
\nabla q^{(k)}_{ij}(D_m,D_{m_{c-1}},\dots,D_{m_1})=(0,\dots,0,-E^{(k)}_{ij},0,\dots,0) & \text{ if $(i,j)\in T^{(k)}$,}\\
\end{array}
\]
where $E^{(k)}_{ij}$ is the matrix in $\R^{(m+n)^2}$ whose $(i,j)$-coefficient equals $1$ and the other coefficients are $0$ holding the $(c-k+1)$-position in the vector $(X,Y_{c-1},\dots,Y_1)\in\R^{(m+n)^2 c}$, for every $k\in\{1,\dots,c\}$. Consequently, we have that 
\begin{align*}
\mr{rnk}\,J_\phi(D_m,D_{m_{c-1}},\dots,D_{m_1})&\geq \sum_{k=1}^{c}((m+n)^2-a_k(d_k-m_k))\\
&=c(m+n)^2-\dim(\sigma_{\lambda}).
\end{align*}
Let $(A,B_{c-1},\dots,B_1)\in Z_{\lambda}$ and let $G\in O(m+n)$ be such that $D_m=G^TAG$ and $D_{m_k}=G^TB_kG$, for every $k\in\{1,\dots,c-1\}$. Define the linear automorphisms $\psi:\R^{(m+n)^2}\to\R^{(m+n)^2}$ by $\psi(X):=G^TXG$ and $\psi^{\times k}:\R^{(m+n)^2 k}\to \R^{(m+n)^2 k}$ by $\psi^{\times k}(X_1,\dots,X_k):=(\psi(X_1),\dots,\psi(X_k))$, for $k\in \N^*$. Since $\psi(A)=D_m$ and $(\psi^{\times (4c-2)})\circ\phi=\phi\circ(\psi^{\times c})$, we have that
\begin{align*}
J_{\psi^{\times (4c-2)}}(\phi(A,B_{c-1}&,\dots,B_1))J_\phi(A,B_{c-1},\dots,B_1)=\\
&J_\phi(D_m,D_{m_{c-1}},\dots,D_{m_1})J_{\psi^{\times c}}(A,B_{c-1},\dots,B_1).
\end{align*}
Bearing in mind that both matrices $J_{\psi^{\times (4c-2)}}(\phi(A,B_{c-1},\dots,B_1))$ and $J_{\psi^{\times c}}(\\A,B_{c-1},\dots,B_1)$ are invertible, it follows that
\begin{align*}
\mr{rnk}\,J_\phi(A,B_{c-1},\dots,B_1)&=\mr{rnk}\,J_\phi(D_m,D_{m_{c-1}},\dots,D_{m_1})\\
&\geq c(m+n)^2-\dim(\sigma_{\lambda}),
\end{align*}
as desired. Since $Z_{\lambda}\subset\R^{(m+n)^2 c}$ is $\Q$-algebraic and is contained in the projectively $\Q$-closed $\Q$-algebraic set $\G_{m,n}\times\G_{m_{c-1},n_{c-1}}\times\dots\times\G_{m_1,n_1}\subset\R^{(m+n)^2 c}$, \cite[Lemma\,2.24(ii)]{GSa} ensures that $Z_{\lambda}\subset\R^{(m+n)^2 c}$ is a  projectively $\Q$-closed $\Q$-algebraic set as well. This proves that $Z_\lambda\subset\R^{(m+n)^2 c}$ is a projectively $\Q$-closed $\Q$-nonsingular $\Q$-algebraic set, as desired.
\end{proof}

A combination of Remark \ref{rem:Q-grass} and Lemmas \ref{lem:des} \& \ref{lem:Q-Z_{lambda}} provides a complete proof of Theorem \ref{thm:Q-desingularization}.

\vspace{0.5em}

\begin{remark}\label{rem:Q-des}
We point out that Theorem \ref{thm:Q-desingularization} follows as well by general algorithms for resolution of singularities, indeed the invariance under field extension is explicitly mentioned in \cite[Section\,5.7]{Wlo05}, \cite[Section\,3.34.2]{Kol07} and \cite[Theorem\,1.1]{BM08}. However, there are some points to clarify. These algorithms in characteristic $0$ actually work by resolving the marked ideal associated to an algebraic variety so, in our setting, we actually resolve the vanishing ideal $\Ii_\Q(V)$ of an affine $\Q$-algebraic set $V\subset\R^n$. We recall that in general $\Ii_\Q(V)\subset \Ii(V)$, so in principle the resolution will differ with respect to the classical Hironaka resolution for the algebraic set $V\subset \R^n$. The algorithm is claimed, in all mentioned references, to be stable under field extension. As explained in \cite{Wlo05}, a possible strategy is to deduce the invariance under field extension in three steps: first W\l odarczyk deals with extensions of algebraically closed fields in Section 5.4, essentially by using Chevalley's Theorem as a transfer principle, then the author proves the statement for the extension  $\overline{K}|K$ of a field $K$ of characteristic $0$, see Section 5.7, and finally in the same section W\l odarczyk deduces the general result by a scheme theoretical argument. The invariance under algebraic closure in step two is not explicitly proved but it can be deduced by proving that, if a marked ideal $\underline{\mathcal{I}}$ over $\overline{K}$ is $K$-equivariant, then the derivative ideal $\mathcal{D}(\underline{\mathcal{I}})$ of $\underline{\mathcal{I}}$ over $\overline{K}$ is $K$-equivariant as well. Mentioned result is not obvious, indeed the derivative ideal of a marked ideal is locally defined so the local étale description of derivatives naturally involves coefficients over $\overline{K}$. However, the $K$-equivariance of $\mathcal{D}(\underline{\mathcal{I}})$, when $\underline{\mathcal{I}}$ is $K$-equivariant, can be proven by a computation applying Kähler differentials. The latter property allows us to run the algorithm locally preserving the coefficients of the involved equations over $K$. Then, the local centers at each stage of the resolution have to be globalized in such a way the center of each blow-up is globally a $K$-nonsingular $K$-algebraic set, obtaining finally the invariance under the field extension $\overline{K}|K$ by induction. The proof in \cite{Kol07} uses a similar argument based on the theory of schemes and in \cite{BM08} the result is only claimed.  However, as previously mentioned, the application of scheme theoretical arguments is not really satisfying for a concrete point of view in the resolution procedure. We remark as well that our notion of $\R|\Q$-regularity turns out to be satisfied by the resulting real nonsingular algebraic set produced by the considered algorithms for resolution of singularities, see \cite[Proposition\,2.5.1]{Wlo05} and \cite[Appendix\,C]{GSa} for more details. 

As already mentioned, the complexity of the algorithms proposed in \cite{Wlo05,Kol07,BM08} is very high since the procedure is local and it has to be globalized by comparing all the local resolutions so it becomes very hard in practice to apply the algorithm to a given algebraic set, see \cite{FKP} for more details. On the contrary, Bott-Samelson resolution of singular Schubert varieties is much more elementary, very explicit and the complexity is controlled. Indeed, the number of steps to resolve the singularities of a Schubert variety turns out to be exactly the number of depressions of the associated Young diagram, which is at most $c=\min\{m,n\}$ for each Schubert variety of $\G_{m,n}$. Hence, such a simple and explicit $\Q$-desingularization of real Schubert varieties has interest on its own.
\end{remark}

\subsection{Real embedded Grassmannians have projectively $\Q$-algebraic homology}\label{subsec:2.2}

In \cite[Definition\,2.38]{GSa}, Ghiloni and the author introduced a variant `over $\Q$' of the classical algebraic representatives of homology classes of a topological space. Here we briefly recall the notion of `projectively $\Q$-closed $\Q$-algebraic homology'. Let $W\subset\R^k$ be a set endowed with the induced Euclidean topology. Given $p\in\N$ and $\alpha\in H_p(W,\Z/2\Z)$, we say that $\alpha$ is \emph{projectively $\Q$-algebraic} if there exist a $p$-dimensional projectively $\Q$-closed $\Q$-nonsingular $\Q$-algebraic set $Z\subset\R^h$ and a $\Q$-regular map $g:Z\rightarrow W$ such that $g_*([Z])=\alpha$, where $[Z]$ is the fundamental class of $Z$ in $H_p(Z,\Z/2\Z)$.

\vspace{0.5em}

\begin{definition}\label{def:Q-homology}
Given $d\in\N$, we say that $W$ has \emph{projectively $\Q$-algebraic homology} if, for all $p\in\{0,\ldots,d\}$ and for all $\alpha\in H_p(W,\Z/2\Z)$, the homology class $\alpha$ is projectively $\Q$-algebraic. 
\end{definition}

\vspace{0.5em}

The aim of this subsection is to prove that real embedded Grassmannians have projectively $\Q$-algebraic homology. Let us fix some notation about CW complexes. Let $X$ be a topological space endowed by a finite CW complex structure $\mathcal{S}$ of dimension $d$. We denote by $\mathcal{S}^{(k)}$ the set of open $k$-cells of $\mathcal{S}$, for every $k\in\{0,\dots,d\}$. Denote by $X_k:=\bigcup_{\Omega\in\mathcal{S}^{(k)}} \overline{\Omega}$ the $k$-skeleton of $X$ for every $k\in\{0,\dots,d\}$, and $X_{-1}:=\varnothing$. Define $C_k(\mathcal{S},\Z/2\Z):=H_k(X_k,X_{k-1})$ the group of unoriented cellular $k$-chains of $\mathcal{S}$ for every $k\in\{1,\dots,d\}$. Let $\partial_k^\mathcal{S}:C_k(\mathcal{S},\Z/2\Z)\to C_{k-1}(\mathcal{S},\Z/2\Z)$ denote the boundary operator in cellular homology for every $k\in\{1,\dots,d\}$. Define the \emph{$k$-cellular homology group of $X$ (with coefficients in $\Z/2\Z$)} as $H_k(\mathcal{S},\Z/2\Z):=\ker(\partial^\mathcal{S}_k)/\im(\partial^\mathcal{S}_{k+1})$. For more details about CW complexes and their homological theory we refer to \cite{LW69}.

\vspace{0.5em}

\begin{lemma}\label{lem:CW}
Let $W\subset\R^{n}$ be a compact algebraic subset of dimension $d$. Suppose that $W$ admits a finite CW complex structure $\mathcal{S}$ such that the closure of each open cell $\Omega\in\mathcal{S}^{(k)}$ is algebraic for every $k\in\{0,\dots,d\}$. Then,
\[
H_k(W,\Z/2\Z)=\textnormal{Span}\big(\{[\overline{\Omega}]\in H_k(W,\Z/2\Z)\,|\,\Omega\in\mathcal{S}^{(k)}\}\big).
\]
and $\{[\overline{\Omega}]\in H_k(W,\Z/2\Z)\,|\,\Omega\in\mathcal{S}^{(k)}\}$ is a basis of $H_k(W,\Z/2\Z)$ as a $\Z/2\Z$-vector space for every $k\in\{0,\dots,d\}$.
\end{lemma}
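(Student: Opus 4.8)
The plan is to compute $H_*(W,\Z/2\Z)$ through the cellular chain complex of $\mathcal{S}$ and to show that the hypothesis -- closures of cells are algebraic -- forces the cellular differential $\partial^{\mathcal S}_\bullet$ to vanish identically; once this is established, the statement is immediate. Throughout, coefficients are in $\Z/2\Z$.

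First I would collect the standard homological consequences of the CW filtration $X_{-1}\subset X_0\subset\dots\subset X_d=W$. From the long exact sequences of the pairs $(X_j,X_{j-1})$, together with $H_i(X_j)=0$ for $i>j$, one obtains on one hand that the inclusion $H_k(X_k)\hookrightarrow H_k(X_k,X_{k-1})=C_k(\mathcal S,\Z/2\Z)$ identifies $H_k(X_k)$ with the group $Z_k(\mathcal S,\Z/2\Z):=\ker\partial^{\mathcal S}_k$ of cellular $k$-cycles, and on the other hand -- this is just the theorem that cellular homology computes singular homology -- a canonical isomorphism $H_k(W,\Z/2\Z)\cong H_k(\mathcal S,\Z/2\Z)=Z_k/\im\partial^{\mathcal S}_{k+1}$, realized by the surjection $H_k(X_k)=Z_k\twoheadrightarrow H_k(W,\Z/2\Z)$. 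Hence it suffices to prove: (i) $\partial^{\mathcal S}_k(\Omega)=0$ for every cell $\Omega\in\mathcal S^{(k)}$ and every $k$ (so that $Z_k=C_k(\mathcal S,\Z/2\Z)$ and $\im\partial^{\mathcal S}_{k+1}=0$, whence $H_k(W,\Z/2\Z)\cong C_k(\mathcal S,\Z/2\Z)$ with the cells as basis); and (ii) under this identification the basis vector indexed by $\Omega$ corresponds to the class $[\overline\Omega]\in H_k(W,\Z/2\Z)$.

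The mechanism for both (i) and (ii) is the mod-$2$ fundamental class of a compact real algebraic set. Fix a $k$-cell $\Omega$. Closure finiteness of the CW complex gives $\overline\Omega\setminus\Omega\subseteq X_{k-1}$, a set of topological dimension $\le k-1$, while $\Omega$ is an open $k$-ball dense in the algebraic set $\overline\Omega$; it follows that $\overline\Omega$ is a compact real algebraic set of dimension exactly $k$. By the Borel--Haefliger theory of algebraic cycles (see, e.g., \cite[\S11.3]{BCR98}) $\overline\Omega$ then carries a fundamental class $[\overline\Omega]\in H_k(\overline\Omega,\Z/2\Z)$ whose restriction to the local homology group $H_k\big(\overline\Omega,\overline\Omega\setminus\{p\},\Z/2\Z\big)\cong\Z/2\Z$ is the generator for every point $p$ in the $k$-dimensional manifold locus of $\overline\Omega$ -- and $\Omega$, being an open $k$-manifold inside $\overline\Omega$, consists of such points. (In the Grassmannian application one even has $\Omega_\lambda\subseteq\Reg(\sigma_\lambda)$ by Lemma~\ref{lem:schubert}\ref{lem:schubert_1}, so the cell lies in the algebraic regular locus; in general one reduces to this case using that $\Omega$ is connected and meets $\Reg(\overline\Omega)$.) Now push $[\overline\Omega]$ forward to $H_k(X_k,\Z/2\Z)=Z_k(\mathcal S,\Z/2\Z)\subseteq C_k(\mathcal S,\Z/2\Z)$ and compute its $\Omega'$-coordinate for each $k$-cell $\Omega'$ by evaluating at a point $p\in\Omega'$, i.e.\ via $H_k(\overline\Omega,\Z/2\Z)\to H_k(X_k,\Z/2\Z)\to H_k(X_k,X_k\setminus\{p\},\Z/2\Z)\cong\Z/2\Z$: if $\Omega'\neq\Omega$ then $p\notin\overline\Omega$ and the map factors through $H_k(X_k\setminus\{p\},\Z/2\Z)$, hence is zero; if $\Omega'=\Omega$ the composite sends $[\overline\Omega]$ to the local fundamental class at $p$, that is, to the generator. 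Therefore the image of $[\overline\Omega]$ in $C_k(\mathcal S,\Z/2\Z)$ is precisely the generator indexed by $\Omega$; since this image lies in $Z_k(\mathcal S,\Z/2\Z)=\ker\partial^{\mathcal S}_k$, we conclude $\partial^{\mathcal S}_k(\Omega)=0$. As $\Omega$ and $k$ were arbitrary, $\partial^{\mathcal S}_\bullet\equiv 0$, which gives (i), and the same computation gives (ii).

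The routine part is the long-exact-sequence bookkeeping in the second paragraph and the point-evaluation computation of coordinates. The real content -- and the only place where algebraicity of the closed cells, rather than mere compactness, is used -- is the input that each $\overline\Omega$ possesses a mod-$2$ fundamental class restricting to a generator on $\Omega$: this is the step I expect to require the most care, since one must invoke the Borel--Haefliger machinery and make sure it still applies at points of $\Omega$ that may be singular for $\overline\Omega$ as an algebraic set.
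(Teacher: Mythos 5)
Your proof is correct and follows essentially the same route as the paper: both hinge on the Borel--Haefliger fundamental class of each compact algebraic closed cell $\overline\Omega$ (citing \cite[\S 11.3]{BCR98}) together with the resulting vanishing of the cellular boundary operator $\partial^{\mathcal S}_\bullet$. The paper's published argument is considerably terser, asserting $\partial^{\mathcal S}_{k+1}(\overline\Omega')=0$ directly ``since $\overline{\Omega}'$ is algebraic'', whereas you supply the underlying point-evaluation computation of the coordinates of $[\overline\Omega]$ in $C_k(\mathcal S,\Z/2\Z)$ that actually justifies both this vanishing and the identification of the basis with the classes $[\overline\Omega]$.
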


\begin{proof}
By classical arguments about cellular and simplicial homology, $\big\{[\overline{\Omega}]\in H_k(\mathcal{S},\Z/2\Z)\,|\,\Omega\in\mathcal{S}^{(k)}\big\}$ constitutes a system of generators of $H_k(\mathcal{S},\Z/2\Z)$ for every $k=0,\dots,d$. We are only left to prove that $\big\{[\overline{\Omega}]\in H_k(\mathcal{S},\Z/2\Z)\,|\,\Omega\in\mathcal{S}^{(k)}\big\}$ is linearly independent over $\Z/2\Z$. Since $\overline{\Omega}$ is algebraic for every open cell $\Omega\in\mathcal{S}^{(k)}$, for every $k\in\{0,\dots,d\}$, the fundamental class $[\overline{\Omega}]$ of $\overline{\Omega}$ is a well defined homology class in $H_k(W,\Z/2\Z)$, see \cite[\S\,11.3]{BCR98}. Suppose $\Omega\in\mathcal{S}^{(k)}$, then for every $\Omega'\in\mathcal{S}^{(k+1)}$ we have $\partial_{k+1}^{\mathcal{S}}(\overline{\Omega}')=0$, since $\overline{\Omega}'$ is algebraic as well. Hence, we get that $[\overline{\Omega}]\in H_k(\mathcal{S},\Z/2\Z)$ is non-null and linearly independent with respect to $\big\{[\overline{\Omega}']\in H_k(\mathcal{S},\Z/2\Z)\,|\,\Omega'\in\mathcal{S}^{(k)}\,\text{and}\,\Omega'\neq\Omega\big\}$ for every choice of $\Omega\in\mathcal{S}^{(k)}$ and $k\in\{0,\dots,d\}$. This proves that $\big\{[\overline{\Omega}]\in H_k(\mathcal{S},\Z/2\Z)\,|\,\Omega\in\mathcal{S}^{(k)}\big\}$ is a basis of $H_k(\mathcal{S},\Z/2\Z)$, then $\big\{[\overline{\Omega}]\in H_k(W,\Z/2\Z)\,|\,\Omega\in\mathcal{S}^{(k)}\big\}$ it is also a basis of $H_k(W,\Z/2\Z)$, as desired.
\end{proof}

Following the notation of Section \ref{subsec:2.1}, we refer to embedded Schubert varieties $\sigma_{\lambda}$ of $\G_{m,n}\subset\R^{(m+n)^2}$ defined by incidence conditions, prescribed by $\lambda$, with respect to the canonical complete flag of $\R^{m+n}$. Denote by $|\lambda|:=\sum_{i=1}^m \lambda_i$.

\vspace{0.5em}

\begin{corollary}\label{cor:homology}
Let $\G_{m,n}\subset\R^{(m+n)^2}$. Then:
\[
H_k(\G_{m,n},\Z/2\Z)=\textnormal{Span}\big(\big\{[\sigma_{\lambda}]\in H_k(\G_{m,n},\Z/2\Z)\,|\,|\lambda|=mn-k\big\}\big)
\]
for every $k\in\{0,\dots,mn\}$, where $\lambda$ is a partition of the $(m\times n)$-rectangle, $\sigma_{\lambda}$ is the Schubert variety of $\G_{m,n}$ defined by the incidence conditions, prescribed by $\lambda$, with respect to the canonical complete flag.

In particular, $\big\{[\sigma_{\lambda}]\in H_k(\G_{m,n},\Z/2\Z)\,|\,|\lambda|=mn-k\big\}$ as above is a basis of $H_k(\G_{m,n},\Z/2\Z)$ for every $k\in\{1,\dots,mn\}$.
\end{corollary}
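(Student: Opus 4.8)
The plan is to recognise the classical Schubert cell decomposition of $\G_{m,n}$ as a finite CW complex structure satisfying the hypotheses of Lemma \ref{lem:CW}, and then to quote that lemma.

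First I would recall that $\G_{m,n}\subset\R^{(m+n)^2}$, in the embedding \eqref{eq:GG}, is a compact algebraic set of dimension $mn$ (see \cite[\S 3]{BCR98}). Let $\mathcal{S}:=\{\Omega_\lambda\}_\lambda$ be the collection of Schubert open cells, with $\lambda$ ranging over the finitely many partitions fitting into the $(m\times n)$-rectangle. I would then check that $\mathcal{S}$ is a finite CW complex structure on $\G_{m,n}$ whose set of open $k$-cells is $\mathcal{S}^{(k)}=\{\Omega_\lambda\,|\,|\lambda|=mn-k\}$: by Lemma \ref{lem:schubert}(ii) each $\Omega_\lambda$ is biregular isomorphic to $\R^{mn-|\lambda|}$, hence is an open cell of dimension $mn-|\lambda|$; by Lemma \ref{lem:schubert}(iii)--(iv) one has $\overline{\Omega_\lambda}=\sigma_\lambda=\bigcup_{\mu\geq\lambda}\Omega_\mu$, and since $\mu\geq\lambda$ forces $|\mu|\geq|\lambda|$ (so $\dim\Omega_\mu=mn-|\mu|\leq mn-|\lambda|$), the frontier $\sigma_\lambda\setminus\Omega_\lambda$ is a union of cells of strictly smaller dimension. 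This is exactly the closure-finiteness (frontier) condition, so $\mathcal{S}$ is the desired CW structure, with $d=mn$.

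Next I would invoke Lemma \ref{lem:schubert}(i): the closure $\overline{\Omega_\lambda}=\sigma_\lambda$ of every open cell of $\mathcal{S}$ is an algebraic subset of $\R^{(m+n)^2}$. At this point $W:=\G_{m,n}$ equipped with $\mathcal{S}$ satisfies all the hypotheses of Lemma \ref{lem:CW}, and applying that lemma gives
\[
H_k(\G_{m,n},\Z/2\Z)=\Span\big(\big\{[\sigma_\lambda]\in H_k(\G_{m,n},\Z/2\Z)\,|\,|\lambda|=mn-k\big\}\big)
\]
together with the assertion that this spanning set is a $\Z/2\Z$-basis, for every $k\in\{0,\dots,mn\}$; the case $k=0$ is in any event immediate from the connectedness of $\G_{m,n}$.

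I do not expect a genuine obstacle: the whole content is the translation of the Schubert stratification into a CW structure for which Lemma \ref{lem:CW} applies, and the only step needing a little care is the verification of the frontier condition, which is precisely what Lemma \ref{lem:schubert}(iv) provides. It is also worth recording that the number of open $k$-cells equals the number of partitions $\lambda$ with $|\lambda|=mn-k$ contained in the $(m\times n)$-box, so the corollary in particular recovers the classical combinatorial description of the mod-$2$ Betti numbers of the Grassmannian.
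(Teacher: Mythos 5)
Your proposal is correct and follows essentially the same route as the paper: recognize the Schubert cells $\{\Omega_\lambda\}_\lambda$ as a finite CW structure on $\G_{m,n}$ with algebraic cell closures $\sigma_\lambda$ (via Lemma \ref{lem:schubert}) and then apply Lemma \ref{lem:CW}. Your extra verification of the frontier condition is a welcome elaboration of what the paper leaves implicit.
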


\begin{proof}
By Lemma \ref{lem:schubert} the family of $\Omega_{\lambda}$ such that $\lambda$ is a partition of the $(m\times n)$-rectangle constitutes the cells of a finite CW-complex whose underlying topological space is $\G_{m,n}$ such that $\sigma_{\lambda}=\overline{\Omega}_{\lambda}$ is algebraic for every partition $\lambda$ of the $(m\times n)$-rectangle. Hence, the thesis follows by Lemma \ref{lem:CW}.
\end{proof}

\begin{theorem}\label{thm:Q-algebraic-homology}
Each $\G_{m,n}\subset\R^{(m+n)^2}$ is a projectively $\Q$-closed $\Q$-nonsingular $\Q$-algebraic set having projectively $\Q$-algebraic homology.
\end{theorem}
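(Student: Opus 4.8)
The plan is to reduce the statement to results already established. That $\G_{m,n}\subset\R^{(m+n)^2}$ is a projectively $\Q$-closed $\Q$-nonsingular $\Q$-algebraic set is exactly \cite[Lemmas\,2.18\,\&\,2.21]{GSa}, so the real content is the homology assertion. I would fix $k\in\{0,\dots,mn\}$ and recall from Corollary \ref{cor:homology} that the fundamental classes $[\sigma_\lambda]$ with $|\lambda|=mn-k$ form a $\Z/2\Z$-basis of $H_k(\G_{m,n},\Z/2\Z)$. Since the projectively $\Q$-algebraic classes of a fixed degree form a $\Z/2\Z$-linear subspace of $H_k(\G_{m,n},\Z/2\Z)$ --- sums being realized by disjoint unions, which remain projectively $\Q$-closed $\Q$-nonsingular $\Q$-algebraic sets by a standard disjoint-union construction over $\Q$ (cf.\ \cite{FG,GSa}) --- it suffices to show that each basis class $[\sigma_\lambda]$ is projectively $\Q$-algebraic.

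So fix a partition $\lambda$ with $|\lambda|=mn-k$, so that $\dim\sigma_\lambda=mn-|\lambda|=k$ by Lemma \ref{lem:schubert}. By Theorem \ref{thm:Q-desingularization} the Schubert variety $\sigma_\lambda$ admits a $\Q$-desingularization: there are $N\in\N$ and a $k$-dimensional projectively $\Q$-closed $\Q$-nonsingular $\Q$-algebraic set $V'\subset\R^{(m+n)^2}\times\R^N$ such that, writing $\pi$ for the projection onto the first factor, $\pi|_{V'}:V'\to\sigma_\lambda$ is birational. Because $V'$ is projectively $\Q$-closed, hence compact, and $\pi|_{V'}$ is dominant onto the irreducible algebraic set $\sigma_\lambda=\overline{\Omega_\lambda}$, the map $\pi|_{V'}$ is surjective, so it is a $\Q$-regular map of $V'$ onto $\sigma_\lambda$; composing with the (clearly $\Q$-regular) inclusion $\iota:\sigma_\lambda\hookrightarrow\G_{m,n}$ yields a $\Q$-regular map $g:V'\to\G_{m,n}$.

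It then remains to compute $g_*([V'])$. Since $\pi|_{V'}$ is birational with nonsingular source, it is generically one-to-one, and therefore by the theory of fundamental classes of (possibly singular) real algebraic sets, see \cite[\S\,11.3]{BCR98}, we have $(\pi|_{V'})_*([V'])=[\sigma_\lambda]$ in $H_k(\sigma_\lambda,\Z/2\Z)$; pushing forward by $\iota$ gives $g_*([V'])=\iota_*([\sigma_\lambda])=[\sigma_\lambda]$ in $H_k(\G_{m,n},\Z/2\Z)$. As $V'$ is a $k$-dimensional projectively $\Q$-closed $\Q$-nonsingular $\Q$-algebraic set and $g$ is $\Q$-regular, this exhibits $[\sigma_\lambda]$ as projectively $\Q$-algebraic. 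Running over all $\lambda$ and all $k$ and using linearity then yields Definition \ref{def:Q-homology}, which is the claim.

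I expect the main obstacle to be not the geometry --- Theorem \ref{thm:Q-desingularization} does the heavy lifting --- but the bookkeeping around $\Q$-regularity and projectivity: one must check that $\Q$-regularity is preserved under composition with the coordinate projection $\pi$ and with the closed inclusion $\iota$ (both easy, since coordinate projections and inclusions of $\Q$-algebraic sets are $\Q$-regular), and, more importantly, that the disjoint union used to upgrade from a basis of $H_k$ to all of $H_k$ can be re-embedded as a \emph{projectively} $\Q$-closed $\Q$-nonsingular $\Q$-algebraic set rather than merely a $\Q$-algebraic one. The latter is the only step that does not reduce verbatim to a cited result, though it follows by the standard overt-polynomial product trick applied to equations with rational coefficients.
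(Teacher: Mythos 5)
Your proposal is correct and follows essentially the same route as the paper: reduce to the Schubert basis via Corollary \ref{cor:homology}, apply Theorem \ref{thm:Q-desingularization} to each $\sigma_\lambda$, and push forward the fundamental class of the $\Q$-desingularization (the paper cites \cite[Lemma\,5.3]{AK85} or \cite[Lemma\,1.1]{BT80} for the identity $(\pi|_{V'})_*([V'])=[\sigma_\lambda]$, where you invoke \cite[\S\,11.3]{BCR98}). Your explicit remark that arbitrary classes are handled by disjoint unions of the representatives of the basis elements is a detail the paper leaves implicit, and it is correctly resolved by the translation-and-overt-polynomial argument of \cite[Lemmas\,2.11\,\&\,2.15]{GSa}.
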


\begin{proof}
By Corollary \ref{cor:homology}, for every $k\in\{0,\dots,mn\}$:
\[
H_k(\G_{m,n},\Z/2\Z)=\textnormal{Span}\big(\big\{[\sigma_{\lambda}]\in H_k(\G_{m,n},\Z/2\Z)\,|\,|\lambda|=mn-k\big\}\big),
\]
where each $\sigma_{\lambda}$ is a Schubert variety of $\G_{m,n}$ defined by the incidence conditions, prescribed by $\lambda$, with respect to the canonical complete flag of $\R^{m+n}$. By Theorem \ref{thm:Q-desingularization}, each Schubert variety $\sigma_\lambda$ as above admits a $\Q$-desingularization, that is: there exists a projectively $\Q$-closed $\Q$-nonsingular $\Q$-algebraic set $Z_\lambda\subset\R^{(m+n)^2}\times \R^p$ of dimension $\dim(\sigma_{\lambda})$, for some $p\in\N$, such that $\pi_1:Z_\lambda\to\sigma_{\lambda}$ is a birational map. Observe that, since $\pi_1:Z_\lambda\to\sigma_{\lambda}$ is surjective, injective onto the Zariski open subset $\Omega_\lambda$ such that $\overline{\Omega}_{\lambda}=\sigma_{\lambda}$ and $\dim(Z_\lambda)=\dim(\sigma_{\lambda})$, we get that $\pi_{1*}([Z_\lambda])=[\sigma_{\lambda}]$ by \cite[Lemma\,5.3]{AK85} or \cite[Lemma\,1.1]{BT80}, as desired.
\end{proof}

\vspace{1.5em}


\section{Relative $\Q$-algebraic constructions}\label{sec:3}

\vspace{0.5em}

\subsection{$\Q$-Algebraic bordism classes and unoriented relative bordisms}\label{subsec:3.1}

In \cite[Definition\,2.39]{GSa}, Ghiloni and the author introduced a variant `over $\Q$' of the classical algebraic unoriented bordism and investigated its relation with projectively $\Q$-algebraic homology. Here we briefly recall those notions and useful results. Let $W\subset\R^k$ be a real algebraic set. Given a compact $\mscr{C}^\infty$ manifold $P$ and a $\mscr{C}^\infty$ map $f:P\to W$, we say that the unoriented bordism class of $f$ is \emph{projectively $\Q$-algebraic} if there exist a compact $\mscr{C}^\infty$ manifold $T$ with boundary, a projectively $\Q$-closed $\Q$-nonsingular $\Q$-algebraic set $Y\subset\R^h$, a $\mscr{C}^\infty$ diffeomorphism $\psi:P\sqcup Y\to\partial T$ and a $\mscr{C}^\infty$ map $F:T\rightarrow W$ such that $F\circ\jmath\circ(\psi|_P)=f$ and $F\circ \jmath\circ(\psi|_Y)$ is a $\Q$-regular map, where $\jmath:\partial T\hookrightarrow T$ is the inclusion map.

\vspace{0.5em}

\begin{definition}\label{def:Q-bordism}
We say that $W$ has \emph{projectively $\Q$-algebraic unoriented bordism} if for all $p\in\{0,\ldots,d\}$, for all compact $\mscr{C}^\infty$ manifold $P$ of dimension $p$ and for all $\mscr{C}^\infty$ map $f:P\to W$, the unoriented bordism class of $f$ is projectively $\Q$-algebraic.
\end{definition}

\vspace{0.5em}

Here we recall the fundamental result of \cite{GSa} about the equivalence between Definitions \ref{def:Q-homology} \& \ref{def:Q-bordism}. That is:

\vspace{0.5em}

\begin{lemma}[{\cite[Lemma\,2.41]{GSa}}\label{thm:Q_homology}]
Let $W\subset\R^k$ be a $\Q$-nonsingular $\Q$-algebraic set. The following assertions are equivalent.
\begin{enumerate}[label=\emph{(\roman*)},ref=(\roman*)]
\item\label{en:Q_homology1} $W$ has projectively $\Q$-algebraic unoriented bordism.
\item\label{en:Q_homology2} $W$ has projectively $\Q$-algebraic homology.
\end{enumerate}
\end{lemma}

\vspace{0.5em}

Let us specify `over $\Q$' the construction of the algebraic unoriented relative cobordism by Akbulut and King in \cite[Lemma\,4.1]{AK81b}.

\vspace{0.5em}

\begin{lemma}\label{lem:Q_tico_bordism}
Let $M$ be a compact $\mscr{C}^\infty$ submanifold of $\R^{n}$ of dimension $d$ and let $M_i$, for $i=1,\dots,\ell$, be closed $\mscr{C}^\infty$ submanifolds of $M$ of codimension $c_i$ in general position. Then there are a compact $\mscr{C}^\infty$ manifold with boundary $T$ and proper $\mscr{C}^\infty$ submanifolds with boundary $T_i$, for $i=1,\dots,\ell$, in general position, a projectively $\Q$-closed $\Q$-nonsingular $\Q$-algebraic subset $Y$ of $\R^h$ for some $h\in\N$, and a $\mscr{C}^\infty$ diffeomorphism $\psi:M\sqcup Y\to \partial T$ such that:
\begin{enumerate}[label=\emph{(\roman*)}, ref=(\roman*)]
\item\label{Q_tico_bordism_a} $Y$ is the disjoint union of projectively $\Q$-closed $\Q$-nonsingular $\Q$-algebraic sets $Y^\alpha\subset\R^{h}$ for every $\alpha\subset\{1,\dots,\ell\}$ such that $\bigcap_{i\in\alpha}M_i\neq\varnothing$.
\item\label{Q_tico_bordism_b} $\partial T\cap T_i=\partial T_i$, $\psi(M)\cap T_i=\psi(M_i)$ and $\psi(Y^\alpha)\cap T_i=\psi(Y_{i}^\alpha)$ where $Y_i^{\alpha}$, for $i=1,\dots,\ell$, are projectively $\Q$-closed $\Q$-nonsingular $\Q$-algebraic subsets of $Y^\alpha$ in general position with $Y_i^\alpha=\varnothing$ whenever $i\notin\alpha$.
\item\label{Q_tico_bordism_d} For every $\alpha\subset\{1,\dots,\ell\}$ and $i\in\alpha$, there is a $\Q$-regular function $\mu_i ^\alpha:Y_i^\alpha\rightarrow \G_{c_i,n-c_i}$ such that
\[
Y^\alpha=(\mu_i^{\alpha})^*(\E^*_{c_i,n-c_i}).
\]
In particular, $\mu_i^{\alpha}$ is the Gauss mapping of $Y_i^\alpha$ in $Y^\alpha$.
\end{enumerate}
\end{lemma}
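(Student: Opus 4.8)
The plan is to adapt the construction of Akbulut and King \cite[Lemma\,4.1]{AK81b}, carrying out every algebraic step ``over $\Q$'' by means of Sections \ref{sec:1}--\ref{sec:2}. The purely smooth part of \cite[Lemma\,4.1]{AK81b} requires no change: it produces a compact cobordism $T$ with neat submanifolds $T_i$ in general position, a splitting $\partial T = M\sqcup Y_{\mathrm{top}}$ with $Y_{\mathrm{top}} = \bigsqcup_{\alpha} Y^{\alpha}_{\mathrm{top}}$ indexed by the subsets $\alpha\subset\{1,\dots,\ell\}$ with $N_\alpha := \bigcap_{i\in\alpha} M_i\neq\varnothing$ (with $N_\varnothing := M$), an identification of each $Y^{\alpha}_{\mathrm{top}}$, for $\alpha\neq\varnothing$, with the iterated sphere bundle over $N_\alpha$ obtained by replacing, for each $i\in\alpha$, the rank-$c_i$ subbundle $\nu(M_i,M)|_{N_\alpha}\subset\R^{n}$ by its fibrewise one-point compactification --- classified by a smooth Gauss map $N_\alpha\to\G_{c_i,n-c_i}$ into the Grassmannian in the embedding \eqref{eq:GG} --- and an identification $T_i\cap Y^{\alpha}_{\mathrm{top}} = Y^{\alpha}_{i,\mathrm{top}}$ of the neat submanifold structure with the ``deleted'' sub-iterated-sphere-bundle (for $\alpha=\varnothing$, $Y^{\varnothing}_{\mathrm{top}}=M$ and all $Y^{\varnothing}_{i,\mathrm{top}}=\varnothing$). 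Since the final components $Y^{\alpha}$ are required to be pairwise disjoint, it suffices to replace each $Y^{\alpha}_{\mathrm{top}}$, with its submanifolds and Gauss maps, by a projectively $\Q$-closed $\Q$-nonsingular $\Q$-algebraic set $Y^{\alpha}$, enlarging $T$ by an auxiliary cobordism $W^{\alpha}$ glued along $Y^{\alpha}_{\mathrm{top}}$; these replacements may be carried out one $\alpha$ at a time.

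Fix $\alpha$ with $N_\alpha\neq\varnothing$ and let $g_\alpha = (\mu_i)_{i\in\alpha}\colon N_\alpha\to P_\alpha := \prod_{i\in\alpha}\G_{c_i,n-c_i}$ be the product of the normal Gauss maps, where for $\alpha=\varnothing$ we set $P_\alpha = \{\mathrm{pt}\}$ and $g_\alpha$ constant. The crucial input is that $P_\alpha$ has projectively $\Q$-algebraic unoriented bordism: each factor $\G_{c_i,n-c_i}$ has projectively $\Q$-algebraic homology by Theorem \ref{thm:Q-algebraic-homology}, and since finite products of projectively $\Q$-closed $\Q$-nonsingular $\Q$-algebraic sets are again such and products of $\Q$-regular maps are $\Q$-regular \cite{GSa}, a product of the Schubert $\Q$-desingularizations of Theorem \ref{thm:Q-desingularization} represents each product of Schubert classes, so by the Künneth formula $P_\alpha$ also has projectively $\Q$-algebraic homology, and Theorem \ref{thm:Q_homology} converts this into projectively $\Q$-algebraic unoriented bordism (for $\alpha=\varnothing$ this is the case $W=\{\mathrm{pt}\}$). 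Applying Definition \ref{def:Q-bordism} to $g_\alpha$ therefore yields a compact $\mscr{C}^{\infty}$ cobordism $T'_\alpha$, a projectively $\Q$-closed $\Q$-nonsingular $\Q$-algebraic set $N'_\alpha$, a diffeomorphism $N_\alpha\sqcup N'_\alpha\to\partial T'_\alpha$ and a smooth $G_\alpha\colon T'_\alpha\to P_\alpha$ restricting to $g_\alpha$ on $N_\alpha$ and to a $\Q$-regular map $g'_\alpha = (\mu'_i)_{i\in\alpha}$ on $N'_\alpha$.

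Now set $Y^{\alpha} := (g'_\alpha)^{*}\big(\prod_{i\in\alpha}\E^{*}_{c_i,n-c_i}\big)$, the pull-back sphere bundle over $N'_\alpha$; by Lemma \ref{lem:Q_sphere_pullback} it is a projectively $\Q$-closed $\Q$-nonsingular $\Q$-algebraic set of dimension $\dim N'_\alpha + \sum_{i\in\alpha} c_i = d$ (equal to $\dim N'_\alpha = d$ when $\alpha=\varnothing$), which is \ref{Q_tico_bordism_a}. For $i\in\alpha$, let $Y^{\alpha}_i\subset Y^{\alpha}$ be the subset obtained by pulling back only the factors with index $j\in\alpha\setminus\{i\}$ and imposing $y = 0,\ t = 0$ in the $i$-th factor: Lemma \ref{lem:Q_sphere_pullback} again makes it a projectively $\Q$-closed $\Q$-nonsingular $\Q$-algebraic subset of $Y^{\alpha}$ of codimension $c_i$, the $Y^{\alpha}_i$ lie in general position because the subbundles $\nu(M_i,M)|_{N_\alpha}$ do, and --- taking $\mu^{\alpha}_i\colon Y^{\alpha}_i\to\G_{c_i,n-c_i}$ to be $\mu'_i$ composed with the bundle projection $Y^{\alpha}_i\to N'_\alpha$, which is $\Q$-regular --- pulling the product back in two stages (first the factors $j\neq i$, then the $i$-th) shows $Y^{\alpha} = (\mu^{\alpha}_i)^{*}(\E^{*}_{c_i,n-c_i})$ with $\mu^{\alpha}_i$ the Gauss map of $Y^{\alpha}_i$ in $Y^{\alpha}$; this is \ref{Q_tico_bordism_b} and \ref{Q_tico_bordism_d}. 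Finally, applying the same fibrewise one-point-compactification construction to $G_\alpha\colon T'_\alpha\to P_\alpha$ yields a compact cobordism $W^{\alpha}$ with $\partial W^{\alpha} = Y^{\alpha}_{\mathrm{top}}\sqcup Y^{\alpha}$ carrying neat submanifolds in general position that restrict to the $Y^{\alpha}_{i,\mathrm{top}}$ and the $Y^{\alpha}_i$ on the two ends; gluing $T\cup\bigsqcup_\alpha W^{\alpha}$ along the $Y^{\alpha}_{\mathrm{top}}$, extending each $T_i$ across the matching sub-bundles of the $W^{\alpha}$, and re-embedding everything into a common Euclidean space over $\Q$, one obtains $T$, the $T_i$, $Y = \bigsqcup_\alpha Y^{\alpha}$ and $\psi$ with all the stated properties. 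I expect the main difficulty to be twofold: the smooth bookkeeping of checking, collar by collar, that the $W^{\alpha}$ glue onto $T$ with the $T_i$ remaining neat submanifolds in general position --- this propagates from the canonical sub-bundle structure of iterated sphere bundles but must be verified carefully --- and, as the only point that genuinely departs from \cite{AK81b}, keeping every set in sight defined over $\Q$ and $\R|\Q$-regular, which is precisely what forces the use of the fixed embedding \eqref{eq:GG}, of the sphere bundles of Lemmas \ref{lem:Q_sphere_bundle}--\ref{lem:Q_sphere_pullback}, and of Theorem \ref{thm:Q_homology} in place of Milnor's classical theorem \cite{Mil65}.
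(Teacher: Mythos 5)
Your proposal is correct and takes essentially the same route as the paper: both reduce the lemma to the projectively $\Q$-algebraic unoriented bordism of products of Grassmannians (Theorem \ref{thm:Q_homology} combined with Theorem \ref{thm:Q-algebraic-homology} and the K\"unneth formula) applied to the Gauss maps of the strata, and then assemble the relative cobordism from the pull-back sphere bundles of Lemma \ref{lem:Q_sphere_pullback} over the resulting base cobordisms, following Akbulut--King. The only difference is organizational and harmless: the paper interleaves the algebraization with the surgeries in a single induction on maximal nonempty strata (so the base of each sphere bundle is a surgered, but $\G_\alpha$-bordant, copy of $\bigcap_{i\in\alpha}M_i$ rather than the original intersection), whereas you run the topological construction first and algebraize each far-end component afterwards.
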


\begin{proof}
For every $\alpha\subset\{1,\dots,\ell\}$ we denote by $M_\alpha:=\bigcap_{i\in\alpha} M_i$, if $\alpha\neq\varnothing$, and $M_{\varnothing}:=M$. We argue by induction on the subsets $\alpha$ of $\{1,\dots,\ell\}$ so that $M_\alpha\neq\varnothing$. The case in which all $M_\alpha=\varnothing$, for every $\alpha\subset\{1,\dots,\ell\}$, means that $M=M_\varnothing=\varnothing$, thus the theorem follows by taking $T=\varnothing$. Suppose the set of $\alpha\subset\{1,\dots,\ell\}$ so that $M_\alpha\neq \varnothing$ is non-empty. Let $\alpha$ be such that $M_\alpha\neq\varnothing$ and $M_{\alpha'}=\varnothing$ for every $\alpha'\subset\{1,\dots,\ell\}$ so that $\alpha\varsubsetneq\alpha'$. Let $\beta_i:M_i\rightarrow\G_{c_i,n-c_i}$ be the Gauss mapping of $M_i$ in $M$ for every $i\in\alpha$. Let $\G_\alpha:=\prod_{i\in\alpha} \G_{c_i,n-c_i}$. By Theorem \ref{thm:Q-algebraic-homology} and by the K\"unneth formula, $\G_\alpha\subset \R^{n^2 |\alpha|}$ is a projectively $\Q$-closed $\Q$-nonsingular $\Q$-algebraic set having projectively $\Q$-algebraic homology. Let $\beta_\alpha:M_\alpha\rightarrow \G_\alpha$ be the $\mscr{C}^\infty$ function defined as $\beta_\alpha:=\prod_{i\in\alpha}\beta_i$. Thus, Theorem \ref{thm:Q_homology} ensures the existence of $k_\alpha\in\N$, a compact $\mscr{C}^\infty$ manifold with boundary $T_\alpha$, a projectively $\Q$-closed $\Q$-nonsingular $\Q$-algebraic set $Y_\alpha\subset\R^{k_\alpha}$, a $\mscr{C}^\infty$ diffeomorphism $\psi_\alpha:M_\alpha\sqcup Y_\alpha\to \partial T_\alpha$ and a $\mscr{C}^\infty$ map $\mu^\alpha:T_\alpha\rightarrow\G_\alpha$ such that $\mu^\alpha\circ\jmath_\alpha\circ(\psi_\alpha|_{M_\alpha})=\beta_\alpha$ and $g_\alpha:=\mu^\alpha\circ \jmath_\alpha\circ(\psi_\alpha|_Y)\in\reg^\Q(Y,\G_{\alpha})$, that is, $g_\alpha$ is $\Q$-regular, where $\jmath_\alpha:\partial T_\alpha\hookrightarrow T_\alpha$ denotes the inclusion map.

Let $\E_\alpha^*:=\prod_{i\in\alpha}\E_{c_i,n-c_i}^*$. Define the pullback bundle of $\E_\alpha^*$ via $\mu^\alpha$ as $S^\alpha:=(\mu^\alpha)^{\ast}(\E_\alpha^*)$ and the $\mscr{C}^\infty$ submanifolds $S_i^\alpha$ of $S^\alpha$ as follows
\begin{align*}
S^\alpha&:=\{(x,y_1,t_1,\dots,y_{|\alpha|},t_{|\alpha|}) \in T_\alpha\times(\R^{n}\times\R)^{|\alpha|}\,|\,(\mu^\alpha(x),y_1,t_1,\dots,y_{|\alpha|},t_{|\alpha|})\in \E_\alpha^*\}\\
S_i^\alpha&:=\{(x,y_1,t_1,\dots,y_{|\alpha|},t_{|\alpha|})\in S^\alpha\,|\,y_i=0,\,t_i=0\},
\end{align*}
for every $i\in\alpha$. By definition, the $S_i^\alpha$, for $i\in\alpha$, are in general position and $\bigcap_{i\in\alpha}S_i^\alpha=T_\alpha\times\{0\}\subset T_\alpha\times(\R^{n}\times\R)^{|\alpha|}$. In addition, considering the projections $\pi_{0}^{i}:S_i^\alpha\rightarrow T_\alpha$ and $\pi_i:\G_\alpha\rightarrow \G_{c_i,n-c_i}$, we define $\mu_i^\alpha:S_i^\alpha\rightarrow \G_{c_i,n-c_i}$ as $\mu_i^\alpha=\pi_i\circ\mu^\alpha\circ\pi_{0}^{i}$. Thus, we deduce that $S^\alpha$ is the pullback sphere bundle of $\E_{c_i,n-c_i}^*$ by $\mu_i^\alpha$, i.e. $S^\alpha=(\mu_i^\alpha)^*(\E_{c_i,n-c_i}^*)$, where 
\begin{align*}
(\mu_i^\alpha)^*(\E_{c_i,n-c_i}^*):=\{(x,y_1,t_1,\dots,y_\ell,t_\ell,y_{\ell+1},t_{\ell+1})&\in S_i^\alpha\times \R^{n}\times\R\,|\\ \,(\mu_i^\alpha(x),y_{|\alpha|+1},t_{|\alpha|+1})&\in\E_{c_i,n-c_i}^*\}.
\end{align*}
Thus, $S^\alpha$ and the $S_i^\alpha$, for every $i\in\alpha$, are $\mscr{C}^\infty$ manifolds with boundary satisfying $\partial S_i^\alpha\subset\partial S^\alpha$.  Define:
\begin{align*}
M^\alpha&:=\beta_\alpha^*(\E^*_\alpha)=(\mu^\alpha\circ\jmath_\alpha\circ\psi_\alpha)|_{M_\alpha}^*(\E^*_\alpha)\subset M_\alpha\times\R^{(n+1) |\alpha|},\\
Y^\alpha&:=g_\alpha^*(\E^*_\alpha)=(\mu^\alpha\circ\jmath_\alpha\circ\psi_\alpha)|_{Y_\alpha}^*(\E^*_\alpha)\subset\R^{k_\alpha}\times\R^{(n+1) |\alpha|}.
\end{align*} Observe that, by Lemma \ref{lem:Q_sphere_pullback}, we deduce that $Y^\alpha\subset\R^{k_\alpha}\times\R^{(n+1) |\alpha|}$ is a projectively $\Q$-closed $\Q$-nonsingular $\Q$-algebraic set. Since $\psi_\alpha:M_\alpha\sqcup Y_\alpha\to\partial T_\alpha$ is a diffeomorphism, we deduce that $\Psi_\alpha:M^\alpha\sqcup Y^\alpha\to\partial S^\alpha$ defined as $\Psi_\alpha(x,y_1,t_1,\dots,y_{|\alpha|},t_{|\alpha|})=(\psi_\alpha(x),y_1,t_1,\dots,y_{|\alpha|},t_{|\alpha|})$ is a diffeomorphism as well. Hence, define
\[
Y^\alpha_i:=Y^\alpha\cap\Psi_\alpha^{-1}(\partial S^\alpha_i)
\]
for every $i\in\alpha$. Observe that $Y^\alpha_i=((\mu^\alpha_{\alpha\setminus \{i\}}\circ \Psi_\alpha)|_{Y_\alpha})^*(\E^*_{\alpha\setminus \{i\}})$, where $\mu^\alpha_{\alpha\setminus \{i\}}:T_\alpha\to\G_{c_1,n-c_1}\times\dots\times\G_{c_{i-1},n-c_{i-1}}\times\{0\}\times\G_{c_{i+1},n-c_{i+1}}\times\dots\times\G_{c_{|\alpha|},n-c_{|\alpha|}}$ defined as
\[
\mu^\alpha_{\alpha\setminus\{i\}}(x):=(\mu^\alpha_1(x),\dots,\mu^\alpha_{i-1}(x),0,\mu^\alpha_{i+1}(x),\dots,\mu^\alpha_{|\alpha|}(x))
\]
and
\begin{equation}\label{eq:subbundle}
\E^*_{\alpha\setminus \{i\}}:=\{(y_1,t_1,\dots,y_{|\alpha|},t_{|\alpha|})\in\E^*_\alpha\,|\,y_i=0,\,t_i=0\},
\end{equation}
which is a projectively $\Q$-closed $\Q$-nonsingular $\Q$-algebraic sphere bundle by Lemma \ref{lem:Q_sphere_bundle}. Observe that $(\mu^\alpha_{\alpha\setminus \{i\}}\circ \Psi_\alpha)|_{Y_\alpha}$ is $\Q$-regular since $(\mu^\alpha\circ\psi_\alpha)|_{Y_\alpha}$ is so. Thus, $Y^\alpha_i\subset\R^{k_\alpha}\times\R^{(n+1)|\alpha|}$ is a projectively $\Q$-closed $\Q$-nonsingular $\Q$-algebraic set by Lemma \ref{lem:Q_sphere_pullback}, for every $i\in\alpha$.

Since $\mu^\alpha|_{M_\alpha}$ is the Gauss mapping of $M_\alpha$ in each $M_i$ with $i\in \alpha$, we can select two sufficiently small closed tubular neighborhoods $U_\alpha$ and $V_\alpha$ of $M_\alpha$ in $M^\alpha$ and in $M$, respectively, which are diffeomorphic via a diffeomorphism $h_\alpha:U_\alpha\rightarrow V_\alpha$ satisfying $h_\alpha(U_\alpha\cap S_i^\alpha)=V_\alpha\cap M_i$, for every $i\in\alpha$. Consider the $\mscr{C}^\infty$ manifold with boundary $S$ defined as $S^\alpha\cup (M\times[0,1])$ identifying $U_\alpha$ and $V_\alpha\times\{1\}$ via $h_\alpha\times\{1\}:U_\alpha\to V_\alpha\times\{1\}$ defined as $(h_\alpha\times\{1\})(a)=(h_\alpha(a),1)$, after smoothing corners. In the same way define the $\mscr{C}^\infty$ submanifolds with boundary $S_i$ as $S_i^\alpha\cup (M_i\times[0,1])$ identifying $U_\alpha\cap S_i^\alpha$ with $(V_\alpha\cap M_i)\times\{1\}$ via $h_\alpha\times\{1\}$. Observe that the $\mscr{C}^\infty$ submanifolds $S_i$ of $S$, with $i\in\alpha$, are in general position.

\begin{figure}[h!]
\centering
\tikzset{every picture/.style={line width=0.75pt}} 

\begin{tikzpicture}[x=0.65pt,y=0.65pt,yscale=-1,xscale=1]

\draw [line width=1.5]    (101,177) .. controls (66,176) and (68,70) .. (100,59) ;
\draw [line width=1.5]    (101,177) .. controls (133,175) and (137,58) .. (100,59) ;
\draw [line width=1.5]    (101,177) .. controls (159,168) and (216,198) .. (251,186) ;
\draw [line width=1.5]    (100,59) .. controls (149,49) and (214,79) .. (250,68) ;
\draw [line width=1.5]    (127,127) .. controls (172,116) and (242,148) .. (277,136) ;
\draw [line width=1.5]    (394,166) .. controls (393,176) and (393,185) .. (381,187) ;
\draw [line width=1.5]  [dash pattern={on 1.69pt off 2.76pt}]  (250,68) .. controls (218,78) and (216,182) .. (251,186) ;
\draw [line width=1.5]    (571,138) .. controls (559,155) and (568,184) .. (550,187) ;
\draw [line width=1.5]    (549,69) .. controls (578,62) and (583,120) .. (571,138) ;
\draw [line width=1.5]    (380,69) .. controls (407,67) and (409,100) .. (408,114) ;
\draw [line width=1.5]  [dash pattern={on 1.69pt off 2.76pt}]  (549,69) .. controls (516,84) and (518,186) .. (550,187) ;
\draw [line width=1.5]    (402,138) -- (571,138) ;
\draw [line width=1.5]    (381,187) -- (550,187) ;
\draw [line width=1.5]    (380,69) -- (549,69) ;
\draw [line width=1.5]    (300,124) .. controls (324,124) and (352,123) .. (378,125) ;
\draw [line width=1.5]    (361,151) .. controls (380,152) and (394,154) .. (394,166) ;
\draw [line width=1.5]    (378,125) .. controls (389,125) and (405,128) .. (408,114) ;
\draw [line width=1.5]    (271,163) .. controls (274,148) and (292,150) .. (303,150) ;
\draw [line width=1.5]    (276,111) .. controls (277,123) and (281,125) .. (300,124) ;
\draw [line width=1.5]    (303,150) .. controls (313,150) and (337,151) .. (361,151) ;
\draw [line width=1.5]    (272,137) .. controls (285,133) and (362,138) .. (402,138) ;
\draw [line width=1.5]    (355,124) .. controls (357,93) and (367,73) .. (380,69) ;
\draw [line width=1.5]  [dash pattern={on 1.69pt off 2.76pt}]  (356,150) .. controls (354,134) and (354,136) .. (355,124) ;
\draw [line width=1.5]    (381,187) .. controls (367,185) and (362,176) .. (356,150) ;
\draw    (250,68) .. controls (296,56) and (362,67) .. (380,69) ;
\draw    (251,186) .. controls (284,178) and (293,218) .. (291,242) ;
\draw    (342,242) .. controls (342,218) and (341,187) .. (381,187) ;
\draw    (291,242) .. controls (293,224) and (340,224) .. (342,242) ;
\draw    (291,242) .. controls (291,262) and (341,262) .. (342,242) ;
\draw    (394,166) .. controls (397,140) and (408,134) .. (408,114) ;
\draw [line width=0.75]    (276,111) .. controls (279,128) and (275,147) .. (271,163) ;
\draw [line width=1.5]    (250,68) .. controls (273,71) and (274,103) .. (276,111) ;
\draw [line width=1.5]    (251,186) .. controls (263,183) and (269,171) .. (271,163) ;

\draw (574,159.4) node [anchor=north west][inner sep=0.75pt]    {$M$};
\draw (582,119.4) node [anchor=north west][inner sep=0.75pt]    {$M_{\alpha }$};
\draw (433,193.4) node [anchor=north west][inner sep=0.75pt]    {$M\times [ 0,1]$};
\draw (433,142.4) node [anchor=north west][inner sep=0.75pt]    {$M_{\alpha } \times [ 0,1]$};
\draw (103,106.4) node [anchor=north west][inner sep=0.75pt]    {$Y_{\alpha }$};
\draw (95,145.4) node [anchor=north west][inner sep=0.75pt]    {$Y^{\alpha }$};
\draw (247,115.4) node [anchor=north west][inner sep=0.75pt]    {$M_{\alpha }$};
\draw (242,156.4) node [anchor=north west][inner sep=0.75pt]    {$M^{\alpha }$};
\draw (175,131.4) node [anchor=north west][inner sep=0.75pt]    {$T_{\alpha }$};
\draw (177,189.4) node [anchor=north west][inner sep=0.75pt]    {$S^{\alpha }$};
\draw (318,103.4) node [anchor=north west][inner sep=0.75pt]    {$N$};
\draw (348,240.4) node [anchor=north west][inner sep=0.75pt]    {$Y'$};
\draw (296,32.4) node [anchor=north west][inner sep=0.75pt]    {$S\cup T$};
\draw (309,199.4) node [anchor=north west][inner sep=0.75pt]    {$T$};

\end{tikzpicture}
\caption{Inductive step constructing a relative bordism.}\label{fig:relative-bordism}
\end{figure}
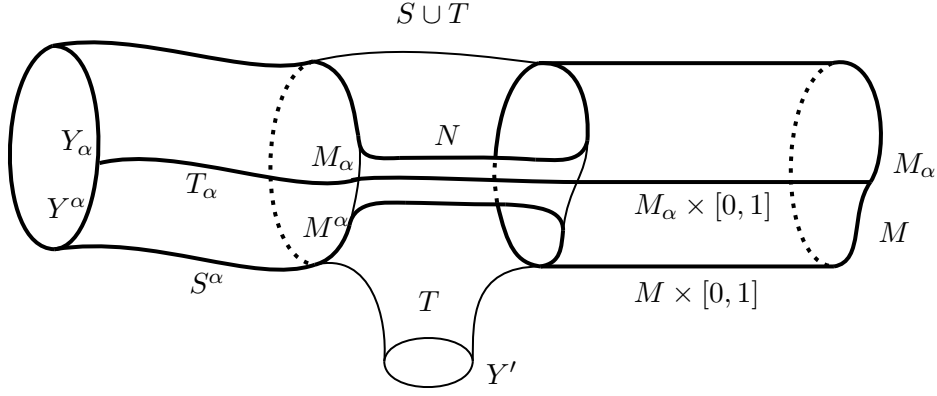

Define the $\mscr{C}^\infty$ manifold $N$ with $\mscr{C}^\infty$ submanifolds  in general position  $N_i$, for every $i\in\{1,\dots,\ell\}$, as follows:
\begin{align*}
N&:=(M^\alpha\setminus\Int(U_\alpha))\cup_{h_\alpha}(M\setminus\Int(V_\alpha)),\\
N_i&:=
\begin{cases}
N\cap S_i\quad&\text{if $i\in\alpha$,}\\
M_i\times\{1\}&\text{otherwise.}
\end{cases}
\end{align*}
Observe that, by construction, $\partial(S^\alpha\cup_{h_\alpha} (M\times[0,1]))=N\sqcup Y^\alpha\sqcup M$, with $M$ identified with $M\times\{0\}$, $\partial(S^\alpha_i\cup_{h_\alpha} (M_i\times[0,1]))=N_i\sqcup Y^\alpha_i\sqcup M_i$ for every $i\in\alpha$, and $\partial (M_i\times[0,1])=N_i\sqcup M_i$ for every $i\notin\alpha$. In particular, it holds that $N_\alpha:=\bigcap_{i\in\alpha}N_i=\varnothing$. By Whitney $\mscr{C}^\infty$ embedding theorem, there is a $\mscr{C}^\infty$ manifold $M'\subset\R^{2d+1}$ with $\mscr{C}^\infty$ submanifolds $M'_i$ of codimension $c_i$ in general position for $i\in\{1,\dots, \ell\}$, which is diffeomorphic to $N$ via a diffeomorphism $\varphi:M'\to N$ such that $\varphi(M'_i)=N_i$ for every $i\in\{1,\dots,\ell\}$. Thus, by inductive assumption on $M'\subset\R^{2d+1}$, there exist $k'\in\N$, a $\mscr{C}^\infty$ manifold with boundary $T'$ and $\mscr{C}^\infty$ submanifolds with boundary $T'_i$ for every $i\in\{1,\dots,\ell\}$, with transverse intersection, a projectively $\Q$-closed $\Q$-nonsingular $\Q$-algebraic subset $Y'$ of $\R^{k'}$ for some $k'\in\N$, a $\mscr{C}^\infty$ diffeomorphism $\psi':M'\sqcup Y'\to \partial T'$ (without lost of generality we can assume $\psi'(M')=N$ and $\psi'(M'_i)=N_i$) such that:
\begin{enumerate}[label={(\roman*$'$)}, ref=(\roman*$'$)]
\item $Y'$ is the disjoint union of a projectively $\Q$-closed $\Q$-nonsingular $\Q$-algebraic  sets $Y^{'\alpha}\subset\R^{n}\times\R^{k'}$, for every $\alpha\subset\{1,\dots,\ell\}$ such that $\bigcap_{i\in\alpha}M'_i\neq\varnothing$.
\item $\partial T'\cap T'_i=\partial T'_i$, $N\cap T'_i=\psi'(M')\cap T'_i=\psi(M'_i)=N_i$ and $\psi'(Y^{'\alpha})\cap T'_i=\psi'(Y_{i}^{'\alpha})$ where $Y_i^{'\alpha}$, for $i\in\{1,\dots,\ell\}$, are projectively $\Q$-closed $\Q$-nonsingular $\Q$-algebraic subsets of $Y^{'\alpha}$ transverse to each other with $Y_i^{'\alpha}=\varnothing$ whenever $i\notin\alpha$.
\item For every $\alpha\subset\{1,\dots,\ell\}$ and $i\in\alpha$, there is a $\Q$-regular function $\mu_i ^{'\alpha}:Y_i^{'\alpha}\rightarrow \G_{c_i,2d+1-c_i}$ such that
\[
Y^{'\alpha}=(\mu_i^{'\alpha})^*(\E^*_{c_i,2d+1-c_i}).
\]
In particular, $\mu_i^{'\alpha}$ is the Gauss mapping of $Y^i_\alpha$ in $Y_i$.
\end{enumerate}

Define $T:=S\cup T'$ and $T_i:=S_i\cup T'_i$, after smoothing corners. Let $k:=\max\{k_\alpha,k'\}$ and consider $\iota_\alpha:\R^{k_\alpha}\to\R^k$ and $\iota':\R^{k'}\to\R^k$ be the inclusion mappings. Then, after a translation of a rational factor $v\in\Q^{k}$ if necessary, we may assume that $(\iota'(Y')+v)\cap \iota_\alpha(Y^\alpha)=\varnothing$, thus $Y:=\iota_\alpha (Y^\alpha)\sqcup (\iota'(Y')+v)\subset\R^{k}$ is a projectively $\Q$-closed $\Q$-nonsingular $\Q$-algebraic set by \cite[Proposition\,2.16(i)\,\&\,Lemma\,2.24]{GSa}. Let $\psi:M\sqcup Y\to \partial T$ defined as follows $\psi|_M:=\psi_\alpha|_M$, $\psi|_{\iota_\alpha(Y_{\alpha})}(x):=\psi_\alpha(\iota_\alpha^{-1}(x))$ and $\psi|_{\iota'(Y')+v}(x):=\psi'((\iota')^{-1}(x-v))$.
\end{proof}

Here we provide an embedded version of Lemma \ref{lem:Q_tico_bordism} and we `double the relative bordism over $\Q$' following the strategy used by Tognoli in \cite[\S\,{\it b}),\,pp.\,176-177]{Tog73}.

\vspace{0.5em}

\begin{theorem} \label{thm:Q-spine-cobordism}
Let $M$ be a compact $\mscr{C}^\infty$ submanifold of $\R^{n}$ of dimension $d$, let $M_i$ for $i=1,\dots,\ell$, be $\mscr{C}^\infty$ submanifolds of $M$ of codimension $c_i$ in general position. Then there exist $s\in\N$ with $s\geq n$, a projectively $\Q$-closed $\Q$-nonsingular $\Q$-algebraic set $Y\subset\R^s=\R^{n}\times\R^{s-n}$ of dimension $d$, $\Q$-nonsingular $\Q$-algebraic subsets $Y_i$, for $i\in\{1,\dots,\ell\}$, of $Y$ in general position, a compact $\mscr{C}^\infty$ submanifold $S$ of $\R^{s+1}=\R^s\times\R$ of dimension $d+1$ and compact $\mscr{C}^\infty$ submanifolds $S_i$ of $S$ of codimension $c_i$, for $i=1,\dots,\ell$, in general position with the following properties:
\begin{enumerate}[label=\emph{(\roman*)}, ref=(\roman*)]
\item\label{thm:Q-spine-cobordism-1} $M\cap Y=\varnothing$.
\item\label{thm:Q-spine-cobordism-2} $S\cap(\R^s\times(-1,1))=(M\sqcup Y)\times(-1,1)$ and $S_i\cap(\R^s\times(-1,1))=(M_i\sqcup Y_i)\times(-1,1)$, for every $i\in\{1,\dots,\ell\}$.
\item\label{thm:Q-spine-cobordism-3} $Y$ is the finite disjoint union $\bigsqcup_{\alpha\in A}(Y^\alpha+v_\alpha)$ of projectively $\Q$-closed $\Q$-nonsingular $\Q$-algebraic sets of the form $Y^\alpha+v_\alpha\subset\R^s$, where $v_\alpha$ belongs to $\Q^s$, $Y^\alpha$ is inductively defined as in the proof of \emph{Lemma \ref{lem:Q_tico_bordism}} and
\[
A:=\Big\{\alpha\subset\{1,\dots,\ell\}\,|\,\bigcap_{j\in\alpha} M_j\neq\varnothing\Big\}.
\]
In addition, there are projectively $\Q$-closed $\Q$-nonsingular $\Q$-alge\-braic subset $Y_\alpha\subset\R^s$ and $\Q$-regular functions $\mu_\alpha:Y_\alpha\to \G^*_\alpha$ such that $Y^\alpha:=\mu_\alpha^*(\E^*_{\alpha})$, with $\G_{\alpha}^*:=\prod_{i\in\alpha}\G_{c_i,n-c_i}^*$ and $\E_{\alpha}^*:=\prod_{i\in\alpha}\E_{c_i,n-c_i}^*$.
\item\label{thm:Q-spine-cobordism-4} Let $i\in\{1,\dots,\ell\}$. Then, $Y_i$ is the finite disjoint union $\bigsqcup_{\alpha\in A_i}(Y^\alpha_i+v_\alpha)$ of projectively $\Q$-closed $\Q$-nonsingular $\Q$-algebraic sets of the form $Y^\alpha_i+v_\alpha\subset\R^s$, where $v_\alpha$ belongs to $\Q^s$ as above,  $Y^\alpha_i$ is inductively defined as in the proof of \emph{Lemma \ref{lem:Q_tico_bordism}} and
\[
A_i:=\{\alpha\in A\,|\, i\in\alpha\}.
\]
In addition, there is a $\Q$-regular map $\mu_i^\alpha:Y^\alpha_i\to \G_{c_i,n-c_i}$ such that, if $\beta_i:S_i\to\G_{c_i,n-c_i}$ denotes the Gauss mapping of $S_i$ in $S$, then $\beta_i|_{Y_i}=\bigsqcup_{\alpha\in A_i} \mu_i^{\alpha}$ is a $\Q$-regular map.
\end{enumerate}
\end{theorem}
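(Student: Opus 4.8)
The plan is to take the abstract relative bordism furnished by Lemma~\ref{lem:Q_tico_bordism}, double it into a closed manifold carrying a product collar along its ``equator'', and then realize the whole package as submanifolds of an affine space by a relative embedding argument --- the doubling step being exactly the one used by Tognoli in \cite[\S\,{\it b}),\,pp.\,176--177]{Tog73}, with the extra care of preserving the projectively $\Q$-closed $\Q$-nonsingular $\Q$-algebraic structure on the ``algebraic end'' of the bordism.

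First I would apply Lemma~\ref{lem:Q_tico_bordism} to $M$ and $M_1,\dots,M_\ell$, obtaining a compact $\mscr{C}^\infty$ manifold with boundary $T$ with proper $\mscr{C}^\infty$ submanifolds with boundary $T_i$ in general position, a projectively $\Q$-closed $\Q$-nonsingular $\Q$-algebraic set $Y_0\subset\R^h$ with its decomposition $Y_0=\bigsqcup_{\alpha\in A}Y^\alpha$ into projectively $\Q$-closed $\Q$-nonsingular $\Q$-algebraic sets, the $\Q$-nonsingular $\Q$-algebraic subsets $Y_i^\alpha\subset Y^\alpha$ in general position, the $\Q$-regular Gauss maps $\mu_i^\alpha\colon Y_i^\alpha\to\G_{c_i,n-c_i}$ with $Y^\alpha=(\mu_i^\alpha)^*(\E^*_{c_i,n-c_i})$, and a $\mscr{C}^\infty$ diffeomorphism $\psi\colon M\sqcup Y_0\to\partial T$ compatible with the $T_i$ as in \ref{Q_tico_bordism_b}. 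From the proof of that lemma one also has $Y^\alpha=g_\alpha^*(\E^*_\alpha)$ for a $\Q$-regular map $g_\alpha$, where $\E^*_\alpha:=\prod_{i\in\alpha}\E^*_{c_i,n-c_i}$; setting $\mu_\alpha:=g_\alpha$ records the pull-back description demanded in \ref{thm:Q-spine-cobordism-3}.

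Next I would form the double $\widehat S:=T^-\cup\big((M\sqcup Y_0)\times[-1,1]\big)\cup T^+$, where $T^\pm$ are two copies of $T$ glued to the two ends of the cylinder via $\psi\times\{-1\}$ and $\psi\times\{1\}$ and corners are smoothed; this is a closed $\mscr{C}^\infty$ manifold of dimension $d+1$, and the analogous double of each $T_i$ along the corresponding cylinder over $\psi^{-1}(\partial T\cap T_i)$ gives closed submanifolds $\widehat S_i\subset\widehat S$ of codimension $c_i$, in general position because the $T_i$, the $M_i$ and the $Y_i^\alpha$ all are. Then I would fix $s$ with $s\ge\max\{n,h,2d+2\}$ (so $s\ge n$) and rational vectors $v_\alpha\in\Q^s$ such that, viewing $M\subset\R^n\subset\R^s$ with its original embedding and each $Y^\alpha+v_\alpha\subset\R^s$ with its given $\Q$-algebraic embedding, $M$ and the $Y^\alpha+v_\alpha$ are pairwise disjoint, and set $Y:=\bigsqcup_{\alpha\in A}(Y^\alpha+v_\alpha)$, $Y_i:=\bigsqcup_{\alpha\in A_i}(Y_i^\alpha+v_\alpha)$. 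By \cite[Lemmas\,2.11\,\&\,2.15]{GSa} the sets $Y,Y_i$ are projectively $\Q$-closed $\Q$-nonsingular $\Q$-algebraic, $\dim Y=d$, the $Y_i$ are in general position and $M\cap Y=\varnothing$; this yields \ref{thm:Q-spine-cobordism-1} and makes \ref{thm:Q-spine-cobordism-3}, \ref{thm:Q-spine-cobordism-4} hold at the level of sets, with $\mu_\alpha,\mu_i^\alpha$ transported by the translations.

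It then remains to embed $\widehat S$ into $\R^{s+1}=\R^s\times\R$ realizing \ref{thm:Q-spine-cobordism-2}. I would place the cylinder $(M\sqcup Y)\times[-1,1]$ inside $\R^s\times[-1,1]$ as the literal product of the embeddings of $M$ and $Y$ just fixed, use a neat (relative) Whitney embedding theorem to embed $T^-$ into $\R^s\times(-\infty,-1]$ transversally to $\R^s\times\{-1\}$ and inducing on $\partial T^-\cong M\sqcup Y$ exactly $(M\sqcup Y)\times\{-1\}$, with $T^-$ staying inside $\R^s\times(-\infty,-1]$ (possible since $s+1\ge 2(d+1)+1$), do the symmetric construction for $T^+$ in $\R^s\times[1,\infty)$, and glue; this gives a compact $\mscr{C}^\infty$ submanifold $S\subset\R^{s+1}$ of dimension $d+1$ with $S\cap(\R^s\times(-1,1))=(M\sqcup Y)\times(-1,1)$, and the same applied to the $T_i^\pm$ (plus a small perturbation supported in $\R^s\times(\R\setminus(-1,1))$ to restore global general position if needed) produces the $S_i\subset S$ of codimension $c_i$ with $S_i\cap(\R^s\times(-1,1))=(M_i\sqcup Y_i)\times(-1,1)$. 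Since near $Y_i$ both $S$ and $S_i$ are products with the $(-1,1)$-factor, the Gauss map $\beta_i$ of $S_i$ in $S$ restricted to $Y_i$ agrees over each $Y_i^\alpha$ with $\mu_i^\alpha$, hence $\beta_i|_{Y_i}=\bigsqcup_{\alpha\in A_i}\mu_i^\alpha$ is $\Q$-regular, finishing \ref{thm:Q-spine-cobordism-4}. The hard part will be precisely this embedding step: turning the abstract bordism into genuine submanifolds of $\R^{s+1}$ whose restriction over the open slab $\R^s\times(-1,1)$ is \emph{exactly} the prescribed product $(M\sqcup Y)\times(-1,1)$, while keeping the family $\{S_i\}$ in general position; this requires the neat/relative Whitney embedding theorem together with transversality carried out relative to the already-fixed cylinder, and it is the only place forcing $s$ to grow (still with $s\ge n$, which is all the statement asks).
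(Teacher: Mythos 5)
Your proposal is correct and follows essentially the same strategy as the paper: apply Lemma~\ref{lem:Q_tico_bordism}, double the resulting bordism (in the spirit of Tognoli), and then embed the double into $\R^{s+1}$ so that the equatorial slab is the prescribed product $(M\sqcup Y)\times(-1,1)$. The one structural difference is in how the embedding is achieved: you first form the abstract double $\widehat S=T^-\cup(\partial T\times[-1,1])\cup T^+$ and then embed the two halves $T^\pm$ separately into the half-spaces $\R^s\times(-\infty,-1]$ and $\R^s\times[1,\infty)$ using a neat/relative Whitney embedding that is compatible with the given product over the slab, whereas the paper avoids a neat-embedding theorem by embedding only one copy of the interior $A=T\setminus\partial T$ into $\R^s\times(0,\infty)$ --- via Tietze together with Whitney's extension theorem applied to the collar map $\theta$ --- and then producing the other half by the reflection $R(x,x_{s+1})=(x,-x_{s+1})$, followed by a rescaling in the last coordinate to widen the product region to $(-1,1)$. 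The reflection trick is slightly more economical (one embedding instead of two, automatic symmetry and automatic agreement on the slab, and no need for a perturbation to restore general position since the collar $\phi'$ is chosen compatible with all the $T_i$ from the start), but your route with two neat embeddings is a valid alternative that reaches the same conclusion. Your remark that a small perturbation supported outside the slab may be needed to restore the general position of $\{S_i\}$ is a reasonable safety net, though, as in the paper, it becomes unnecessary if the extension of the collar map is taken once for the whole family and then restricted to each $A\cap T_i$.
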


\begin{proof}
Thanks to the proof of Lemma \ref{lem:Q_tico_bordism}, for $s\geq n$ sufficiently large, we know that there exist a projectively $\Q$-closed $\Q$-nonsingular $\Q$-algebraic set $Y=\bigsqcup_{\alpha\in A}(Y^\alpha+v_\alpha)\subset \R^s$ and $\Q$-nonsingular $\Q$-algebraic subsets $Y_i=\bigsqcup_{\alpha\in A_i}(Y^\alpha_i+v_\alpha)\subset\R^s$, with $i\in\{1,\dots,\ell\}$, in general position with above properties \ref{thm:Q-spine-cobordism-1}\,(changing the vectors $v_\alpha\in\Q^s$ if necessary)\&\,\ref{thm:Q-spine-cobordism-3}, compact $\mscr{C}^\infty$ manifolds $T$ and $T_i$ with boundary $\partial T$ and $\partial T_i$ so that $T_i\subset T$ and $\partial T_i\subset\partial T$, for every $i=1,\dots,\ell$, and a $\mscr{C}^\infty$ diffeomorphism $\psi:M\sqcup Y\to \partial T$ satisfying Lemma \ref{lem:Q_tico_bordism}\ref{Q_tico_bordism_a}-\ref{Q_tico_bordism_d}.

Let us construct the desired compact $\mscr{C}^\infty$ submanifold $S$ of $\R^{s+1}=\R^s\times\R$, following the strategy used by Tognoli in \cite[\S\,{\it b}),\,pp.\,176-177]{Tog73}. By the collaring theorem (see \cite[{Theorem 6.1, p.\,113}]{Hir94}), there exist an open neighborhood $U$ of $\partial T$ in $T$ and a $\mscr{C}^\infty$ diffeomorphism $\phi':U\to \partial T\times[0,1)$ such that $\phi'(t)=(t,0)$ for all $t\in\partial T$ and $\phi'|_{T_i\cap U}:T_i\cap U\to \partial T_i\times[0,1)$ is a diffeomorphism as well, for every $i=1,\dots,\ell$. Let $\phi:U\to(M\sqcup Y)\times[0,1)$ be the $\mscr{C}^\infty$ diffeomorphism $\phi:=(\psi^{-1}\times\mr{id}_{[0,1)})\circ\phi'$. Note that 
$\phi(t)=(\psi^{-1}(t),0)$ for all $t\in\partial T$. Set $A:=T\setminus\partial T$, $B:=\phi^{-1}((M\sqcup Y)\times(0,\frac{1}{2}])\subset A$, $N:=\R^s\times(0,+\infty)$ and define the map $\theta:B\to N$ by $\theta(x,x_{s+1}):=\phi(x,x_{s+1})$. 
Since we can safely assume $s+1\geq2(d+1)+1$, Tietze's theorem ensures the existence of a continuous extension of $\theta$ from the whole $A$ to $N$, we can apply to $\theta$ the extension theorem \cite[Theorem 5$(\mr{f})$]{Whi36} of Whitney, obtaining a $\mscr{C}^\infty$ embedding $\Theta:A\to N$ extending $\theta$. Let $R:\R^{s+1}=\R^s\times\R\to\R^{s+1}$ be the reflection $R(x,x_{s+1}):=(x,-x_{s+1})$ and let $S'$ and $S'_i$ be the compact $\mscr{C}^\infty$ submanifolds $\Theta(A)\sqcup((M\sqcup Y)\times\{0\})\sqcup R(\Theta(A))$ and $\Theta(A\cap T_i)\sqcup((M_i\sqcup Y_i)\times\{0\})\sqcup R(\Theta(A\cap T_i))$ of $\R^{s+1}$, for every $i=1,\dots,\ell$, respectively. Thanks to the compactness of $T$ and of each $T_i$, there exists $\epsilon>0$ such that $S'\cap(\R^s\times(-\epsilon,\epsilon))=(M\sqcup Y)\times(-\epsilon,\epsilon)$ and $S'_i\cap(\R^s\times(-\epsilon,\epsilon))=(M_i\sqcup Y_i)\times(-\epsilon,\epsilon)$. Let $L:\R^{s+1}\to\R^{s+1}$ be the linear isomorphism $L(x,x_{s+1}):=(x,\epsilon^{-1}x_{s+1})$. The compact $\mscr{C}^\infty$ submanifold $S:=L(S')$ with $\mscr{C}^\infty$ submanifolds $S_i:=L(S'_i)$, for every $i=1,\dots,\ell$, in general position of $\R^{s+1}$ have the desired properties \ref{thm:Q-spine-cobordism-2}\,\&\,\ref{thm:Q-spine-cobordism-4}.
\end{proof}

\subsection{A review on $\Q$-stable $\Q$-algebraic sets}\label{subsec:3.2}

Here we briefly recall the notion of $\Q$-stable $\Q$-algebraic set and we develop an explicit example of $\Q$-stable $\Q$-algebraic set useful for applying approximation techniques `over $\Q$' in Section \ref{sec:4}.

\vspace{0.5em}

\begin{definition}[{\cite[Definition\,3.1]{GSa}}] \label{def:Q-pair}
Let $L\subset\R^n$ be a $\Q$-algebraic set. We say that the pair $L$ is a \emph{$\Q$-stable} if for each $a\in L$, there exists an open neighborhood $U_a$ of $a$ in $\R^n$ such that
\begin{equation}\label{eq:Q-approx-pair}
\Ii^\infty_{\R^n}(L)\mscr{C}^\infty(U_a)=\Ii_\Q(L)\mscr{C}^\infty(U_a),
\end{equation}
i.e., for each $f\in\Ii^\infty_{\R^n}(L)$, we have $f|_{U_a}=\sum_{i=1}^\ell u_i\cdot p_i|_{U_a}$ for some $u_1,\ldots,u_\ell\in\mscr{C}^\infty(U_a)$ and $p_1,\dots,p_\ell$ generators of $\Ii_\Q(L)$. 
\end{definition}

\vspace{0.5em}

Evidently, by definition, the disjoint union of finitely many $\Q$-stable algebraic subsets of $\R^{n}$ is again a $\Q$-stable algebraic subset of $\R^{n}$. Moreover, by \cite[Lemma\,3.3(iii)(iv)]{GSa}, every disjoint union of finitely many $\Q$-nonsingular $\Q$-algebraic sets is $\Q$-stable and, if $L\subset\R^{n}$ is a $\Q$-stable $\Q$-algebraic set then $L\times\{0\}\subset\R^{n}\times\R^k$ is $\Q$-stable for every $k\in\N$. Next Lemma \ref{lem:Q-nice-hyp} will be very useful in the setting of Theorem \ref{thm:Q_tico_tognoli}.

\vspace{0.5em}

\begin{lemma}\label{lem:Q-nice-hyp}
Let $M\subset \R^n$ be a compact $\mscr{C}^\infty$ manifold of dimension $d$.  Let $X\subset M$ be a $\Q$-nonsingular $\Q$-algebraic subset of $\R^n$ of codimension $c$ and let $Y\subset M$ be a $\Q$-nonsingular $\Q$-algebraic hypersurface of $M$. If the germ $(M,X\cup Y)$ of $M$ at $X\cup Y$ is the germ of a $\Q$-nonsingular $\Q$-algebraic set, then $X\cup Y$ is $\Q$-stable.
\end{lemma}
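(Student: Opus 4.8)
The plan is to verify the defining equality \eqref{eq:Q-approx-pair} of $\Q$-niceness for $L:=X\cup Y$ \emph{locally} around each $a\in X\cup Y$, distinguishing cases according to the position of $a$ relative to $X$ and $Y$. Throughout I use that, being $\Q$-nonsingular, both $X$ and $Y$ are $\Q$-nice by \cite[Corollary\,3.3]{GSa}; that the $\Q$-irreducible components of $X$ are pairwise disjoint (as in the discussion around Corollary \ref{cor:Q_setminus}); and that $\Ii_\Q(X\cup Y)=\Ii_\Q(X)\cap\Ii_\Q(Y)$ while $\Ii^\infty_{\R^n}(X\cup Y)=\Ii^\infty_{\R^n}(X)\cap\Ii^\infty_{\R^n}(Y)$. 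In \eqref{eq:Q-approx-pair} the inclusion $\Ii_\Q(L)\mscr{C}^\infty(U_a)\subseteq\Ii^\infty_{\R^n}(L)\mscr{C}^\infty(U_a)$ is automatic, so only the reverse one needs proof.

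The easy cases are those in which $a$ is, locally, a point of a single stratum. If $a\notin Y$, pick $r\in\Ii_\Q(Y)$ with $r(a)\ne0$ (possible since $Y=\mathcal{Z}_\R(\Ii_\Q(Y))$) and a neighbourhood $U_a$ of $a$ on which $r$ does not vanish and which misses $Y$, so $(X\cup Y)\cap U_a=X\cap U_a$. Multiplying by a bump function supported in $U_a$ and equal to $1$ near $a$ gives $\Ii^\infty_{\R^n}(X\cup Y)\mscr{C}^\infty(U_a)=\Ii^\infty_{\R^n}(X)\mscr{C}^\infty(U_a)$; multiplying by $r$ (note $pr\in\Ii_\Q(X\cup Y)$ for $p\in\Ii_\Q(X)$, and $1/r\in\mscr{C}^\infty(U_a)$) gives $\Ii_\Q(X)\mscr{C}^\infty(U_a)\subseteq\Ii_\Q(X\cup Y)\mscr{C}^\infty(U_a)$; together with the $\Q$-niceness of $X$ this yields \eqref{eq:Q-approx-pair} for $L$ at $a$. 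The case $a\in Y\setminus X$ is symmetric. Finally, if $a\in X\cap Y$ but $X$ is contained in $Y$ in some neighbourhood of $a$, then the $\Q$-irreducible component $X_{i_0}$ of $X$ through $a$ lies in $Y$ globally (a rational polynomial in $\Ii_\Q(Y)$ vanishes on an open subset of $X_{i_0}$, hence on all of $X_{i_0}$ by $\Q$-irreducibility and purity of dimension), and $a$ lies off the $\Q$-algebraic set $X'':=\bigcup_{i\ne i_0}X_i$; choosing $m\in\Ii_\Q(X'')$ with $m(a)\ne0$ (possible, $X''$ being closed with $a\notin X''$, by the separation behind Corollary \ref{cor:Q_setminus}) one argues exactly as in the previous case, with $Y$ in place of $X$ and $m$ in place of $r$.

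The core case is: $a\in X\cap Y$ and $X$ is \emph{not} contained in $Y$ in any neighbourhood of $a$. Here I first use the hypothesis: there are a $\Q$-nonsingular $\Q$-algebraic set $M'\subset\R^n$ of dimension $d$ and an open neighbourhood $\Omega\supseteq X\cup Y$ with $M\cap\Omega=M'\cap\Omega$, so that $X\cup Y\subseteq M'$ and $\Ii_\Q(M')\subseteq\Ii_\Q(X\cup Y)$. Applying the $\R|\Q$-Jacobian criterion \cite[Theorem\,4.1.7]{FG} to $M'$, to $X$ (where $\dim X=d-c$) and to $Y$ (where $\dim Y=d-1$) at $a$, I choose rational polynomials $f_1,\dots,f_{n-d}\in\Ii_\Q(M')$, $g_1,\dots,g_c\in\Ii_\Q(X)$, $h\in\Ii_\Q(Y)$ such that: $\nabla f_1(a),\dots,\nabla f_{n-d}(a)$ are independent and cut out $M'$ near $a$; $\nabla f_1(a),\dots,\nabla f_{n-d}(a),\nabla g_1(a),\dots,\nabla g_c(a)$ are independent and cut out $X$ near $a$; $\nabla f_1(a),\dots,\nabla f_{n-d}(a),\nabla h(a)$ are independent and cut out $Y$ near $a$. (The $f_i$ may be completed to defining equations of $X$ and of $Y$ because they already lie in $\Ii_\Q(X)\cap\Ii_\Q(Y)$ and the $\R|\Q$-conormal spaces of $X$ and $Y$ at $a$ are spanned by gradients of rational polynomials.) Since $f_1,\dots,f_{n-d},h$ complete to a smooth coordinate system near $a$, Hadamard's lemma writes any $\varphi\in\Ii^\infty_{\R^n}(X\cup Y)$ near $a$ as $\varphi=\sum_i f_i\alpha_i+h\beta$; restricting to $X$, on which the $f_i$ vanish while $h$ is not identically zero, forces $\beta$ to vanish on $X$ near $a$, whence $\beta=\sum_i f_i\gamma_i+\sum_j g_j\delta_j$ by a second application of Hadamard (using that $X$ is a smooth submanifold cut out by $f_1,\dots,f_{n-d},g_1,\dots,g_c$). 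Thus $\varphi=\sum_i f_i(\alpha_i+h\gamma_i)+\sum_j (g_jh)\delta_j$, i.e. $\Ii^\infty_{\R^n}(X\cup Y)\mscr{C}^\infty(U_a)\subseteq(f_1,\dots,f_{n-d},g_1h,\dots,g_ch)\mscr{C}^\infty(U_a)$ for a suitable $U_a$. Each generator is a rational polynomial in $\Ii_\Q(X\cup Y)$ — $f_i$ vanishes on $M'\supseteq X\cup Y$, and $g_jh$ vanishes on $X$ (through $g_j$) and on $Y$ (through $h$) — so this ideal is squeezed between $\Ii_\Q(X\cup Y)\mscr{C}^\infty(U_a)$ and $\Ii^\infty_{\R^n}(X\cup Y)\mscr{C}^\infty(U_a)$, forcing all three to coincide; this is \eqref{eq:Q-approx-pair} for $L$ at $a$, and $\Q$-niceness of $X\cup Y$ follows.

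I expect the main obstacle to be precisely this last case: arranging a single local normal form in which the equations of $M'$, of $X$ and of $Y$ are organised compatibly by \emph{rational} polynomials, and then running the two successive Hadamard factorisations to recognise the smooth ideal of the union $X\cup Y$ as generated by $f_1,\dots,f_{n-d}$ together with the products $g_jh$. A secondary delicate point is the edge case in which $X$ falls inside $Y$ near $a$ but not globally: there one needs a rational multiplier that is a unit at $a$ yet kills the part of $X$ lying off $Y$, which is exactly where the separation of disjoint $\Q$-nonsingular sets (Corollary \ref{cor:Q_setminus}) enters.
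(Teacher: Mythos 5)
Your proof is correct and follows essentially the same route as the paper's: the core case $a\in X\cap Y$ is handled by the identical two-step Hadamard/\cite[Lemma 2.5.4]{AK92a} factorisation (your $f_i,h,g_j,\beta$ are the paper's $p_i,p,q_j,u$, and the final generators $f_i$ and $g_jh$ coincide with the paper's $p_i$ and $pq_j$), while the easy cases and the sub-case where a component of $X$ lies in $Y$ are dispatched by the paper via \cite[Corollary\,3.3]{GSa} and a ``without loss of generality'' that you simply spell out explicitly. The only cosmetic difference is that the paper first extends the cofactor $u$ to a global element of $\Ii^\infty_{\R^n}(X)$ by a partition of unity before the second factorisation, whereas you apply the division lemma locally to $\beta$; both are fine.
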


\begin{proof}
Without lost of generality we may assume that none of the irreducible components of $X$ is contained in $Y$. Let $a\in (X\cup Y)\setminus (X\cap Y)=(X\setminus Y)\sqcup(Y\setminus X)$. Since both $X\subset\R^n$ and $Y\subset\R^n$ are $\Q$-nonsingular $\Q$-algebraic sets, up to shrink the neighborhood $U_a$, we deduce property (\ref{eq:Q-approx-pair}) by \cite[Appendix\,C]{GSa}.
Let $a\in X\cap Y$ and let $f\in\Ii^\infty_{\R^{n}}(X\cup Y)$. Let $V\subset\R^n$ be a $\Q$-nonsingular $\Q$-algebraic set such that the germ $(M,X\cup Y)$ of $M$ at $X\cup Y$ coincides to the germ $(V,X\cup Y)$ of $V$ at $X\cup Y$. Since $Y$ is a $\Q$-nonsingular $\Q$-algebraic hypersurface of $V\subset\R^n$ there are $p_1,\dots,p_{n-d}\in\Ii_\Q(V)$ and $p\in\Ii_\Q(Y)$ whose gradients at $a$ are linearly independent over $\R$ and there is a neighborhood $U_a$ of $a$ in $\R^{n}$ such that $Y\cap U_a=V\cap \mathcal{Z}_{\R^{n}}(p)\cap U_a=\mathcal{Z}_{\R^{n}}(p,p_1,\dots,p_{n-d})\cap U_a$. Hence, by \cite[Lemma 2.5.4]{AK92a}, there are $u,u_1,\dots,u_{n-d}\in\mscr{C}^\infty(U_a)$ such that $f|_{U_a}=u\cdot p|_{U_a}+\sum_{i=1}^{n-d} u_i \cdot p_i|_{U_a}$, up to shrink the neighborhood $U_a$ of $a$ in $\R^{n}$ if necessary. Since none of the irreducible components of $X$ is contained in $Y$, we deduce that $Y\cap U_a=V\cap\mathcal{Z}_{\R^{n}}(p)\cap U_a\varsubsetneq (X\cup Y)\cap U_a$. Thus $\mathcal{Z}_{\R^{n}}(p)\cap U_a\cap X\subset Y$, up to shrink the neighborhood $U_a$ of $a$ in $\R^{n}$ if necessary. In addition, since $f|_{U_a}=u\cdot p|_{U_a}+\sum_{i=1}^{n-d} u_i \cdot p_i|_{U_a}$, $p_1,\dots,p_{n-d}\in\Ii_\Q(V)$ and $\mathcal{Z}_{\R^{n}}(p)\cap U_a\cap X\subset Y$, we deduce that $X\cap U_a\subset\mathcal{Z}_{\R^{n}}(u)$. Now, let $U'_a\subset U_a$ be a neighborhood of $a$ in $\R^{n}$ such that $\overline{U'_a}\varsubsetneq U_a$. An explicit construction via partitions of unity subordinated to the open cover $\{U_a, X\setminus \overline{U'_a}\}$ of $\R^{n}$ ensures the existence of $g\in\mscr{C}^\infty(\R^{n})$ such that $g|_{U'_a}=u|_{U'_a}$ and $g\in\Ii^{\infty}_{\R^{n}}(X)$. Since $X\subset\R^n$ is a $\Q$-nonsingular $\Q$-algebraic set of codimension $c$ in $V\subset\R^n$, which is a $\Q$-nonsingular $\Q$-algebraic set as well, there are $q_1,\dots,q_c\in\Ii_\Q(X)$ such that $\nabla p_1(a),\dots,\nabla p_{n-d}(a),\nabla q_1(a),\dots,\nabla q_c(a)$ are linearly independent over $\R$ and there exists a neighborhood $V_a$ of $a$ in $\R^n$ such that $X\cap V_a=\mathcal{Z}_{\R^{n}}(p_1,\dots,p_{n-d},q_1,\dots,q_c)\cap V_a$. Thus, by \cite[Lemma 2.5.4]{AK92a}, there are $u'_1,\dots,u'_{n-d},v_1,\dots,v_c\in\mscr{C}^\infty(V_a)$ such that $g|_{V_a}=\sum_{i=1}^c v_i \cdot q_i|_{V_a}+\sum_{i=1}^{n-d} u'_i \cdot p_i|_{V_a}$, up to shrink the neighborhood $V_a$ of $a$ in $\R^{n}$ if necessary. Thus, fixing $V'_a:=U'_a\cap V_a$, we have:
\begin{align*}
f|_{V'_a}&=g|_{V'_a}\cdot p|_{V'_a}+\sum_{i=1}^{n-d} u_i|_{V'_a}\cdot p_i|_{V'_a}\\
&=\left(\sum_{i=1}^c v_i|_{V'_a}\cdot q_i|_{V'_a}+\sum_{i=1}^{n-d} u'_i|_{V'_a}\cdot  p_i|_{V'_a}\right) \cdot p|_{V'_a} +\sum_{i=1}^{n-d} u_i|_{V'_a}\cdot p_i|_{V'_a}\\
&=\sum_{i=1}^c v_i|_{V'_a}\cdot (p\cdot q_i )|_{V'_a} + \sum_{i=1}^{n-d} \left(u_i|_{V'_a}+u'_i|_{V'_a}\cdot p|_{V'_a}\right)\cdot  p_i|_{V'_a},
\end{align*}
where $p_1,\dots,p_{n-d},p\cdot q_1,\dots,p\cdot q_c\in\Ii_\Q(X\cup Y)$, as desired.
\end{proof}

\vspace{1.5em}

\section{Nash-Tognoli theorem over $\Q$ \& the Relative $\Q$-algebraicity problem}\label{sec:4}

Here we are in position to prove a version `over $\Q$' of the relative Nash-Tognoli theorem in \cite{AK81b}. With respect to \cite[Theorem\,3.9\,\&\,Corollary\,3.12]{GSa} our construction via relative cobordisms allows us to deal with submanifolds of arbitrary codimensions. 

Let us briefly present part of the construction of \cite[Theorem\,3.9]{GSa} that we will use in Theorem \ref{thm:Q_tico_tognoli}. We just present a sketch of the proof, a detailed one can be found in the proof of mentioned \cite[Theorem\,3.9]{GSa}.

\vspace{0.5em}

\begin{lemma}\label{lem:Q-Tognoli}
Let $M$ be a compact $\mscr{C}^\infty$ submanifold of $\R^{n}$ of dimension $d$. Let $s\in\N$ with $s\geq n$. Let $Y\subset\R^s=\R^{n}\times\R^{s-n}$ be a projectively $\Q$-closed $\Q$-nonsingular $\Q$-algebraic set  of dimension $d$ and $S\subset\R^{s+1}$ be a compact $\mscr{C}^\infty$ manifold  of dimension $d+1$ such that $M\cap Y=\varnothing$ and $S\cap(\R^s\times(-1,1))=(M\sqcup Y)\times(-1,1)$. Let $W$ be a $\Q$-nonsingular $\Q$-algebraic set and $\varphi:S\to W$ be a $\mscr{C}^\infty$ map such that $\varphi|_{Y\times\{0\}}$ is a $\Q$-regular map. Then, there exist $k\in\N$, a $\Q$-nonsingular $\Q$-algebraic subset $Z$ of $\R^{s+1+k}=\R^{s+1} \times \R^k$, a projectively $\Q$-closed $\Q$-nonsingular $\Q$-algebraic set $M'\subset\R^{s+1+k}$, a $\mscr{C}^\infty$ diffeomorphism $\phi:M\to M'$ and a $\Q$-regular map $\eta:Z\to W$ with the following properties:
\begin{enumerate}[label=\emph{(\roman*)}, ref=(\roman*)]
\item\label{lem:Q_tognoli-1} $\phi\circ\jmath_{M'}$ is arbitrarily $\mscr{C}^\infty_\w$-close to the inclusion map $\jmath_{M}:M\hookrightarrow\R^{s+1+k}$, where $\jmath_{M'}:M'\hookrightarrow\R^{s+1+k}$ denotes the inclusion map.
\item\label{lem:Q_tognoli-2} $\eta|_{M'}\circ\phi$ is arbitrarily $\mscr{C}^\infty_\w$-close to $\varphi$.
\end{enumerate}
\end{lemma}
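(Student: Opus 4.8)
The statement is a relative, $\Q$-algebraic analogue of the key approximation step in the proof of the Nash–Tognoli theorem (as revisited over $\Q$ in \cite[Theorem 3.9]{GSa}), and I would follow that blueprint, keeping careful track of the rationality of coefficients at each stage. The input is: a compact $\mscr C^\infty$ manifold $M\subset\R^n$, a projectively $\Q$-closed $\Q$-nonsingular $\Q$-algebraic set $Y\subset\R^s$ with $M\cap Y=\varnothing$, a compact $\mscr C^\infty$ manifold $S\subset\R^{s+1}$ that ``looks like'' $(M\sqcup Y)\times(-1,1)$ near $\R^s\times\{0\}$, a $\Q$-nonsingular $\Q$-algebraic target $W$, and a $\mscr C^\infty$ map $\varphi\colon S\to W$ restricting to a $\Q$-regular map on $Y\times\{0\}$. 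The picture is exactly Tognoli's ``spine cobordism'': $Y$ is already algebraic and $M$ is cobordant to it through $S$; I want to push the algebraicity of $Y$ across the cobordism onto $M$.

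\emph{Step 1: a relative tubular neighbourhood and an approximating submanifold.} First I would realize $S$ as a nonsingular level set of a suitable smooth submersion on a tubular neighbourhood, using the collaring of $S$ near $\R^s\times\{0\}$ together with the fact that $Y\times\{0\}$ sits there algebraically. Concretely, one builds a $\mscr C^\infty$ function $F\colon\OO\to\R^{s+1-d-1}$ on an open neighbourhood $\OO$ of $S$ (a regular value $0$) whose zero set is $S$; near $Y\times(-1,1)$ one can arrange $F$ to be $\Q$-regular, since $Y$ is $\Q$-nonsingular and hence (by the $\R|\Q$-Jacobian criterion of \cite[Theorem 4.1.7]{FG} and \cite[Corollary 2.4]{GSa}) locally cut out by polynomials over $\Q$ with independent gradients. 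Then I would approximate $F$, \emph{relative to a neighbourhood of $Y\times(-1,1)$ where it is left untouched}, by a $\Q$-regular map $G$ using the $\Q$-algebraic approximation machinery of \cite[\S 3]{GSa}; the zero set $Z:=\Zz_\R(G)$ near $S$ is then a $\Q$-nonsingular $\Q$-algebraic set that agrees with $(M\sqcup Y)\times(-1,1)$-geometry near the slice and contains a copy of $Y$.

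\emph{Step 2: extract $M'$ and the diffeomorphism.} Since $G$ is a small relative approximation of $F$, $Z$ is $\mscr C^\infty$-close to $S$, hence (by the usual transversality/tubular-neighbourhood isotopy argument) $Z\cap(\R^s\times(-1,1))$ is the union of $Y\times(\text{interval})$ and a component $\mscr C^\infty$-diffeomorphic and $\mscr C^\infty_\w$-close to $M\times(\text{interval})$. Slicing at a regular value of the last coordinate gives a $\Q$-nonsingular $\Q$-algebraic set $M'\subset\R^{s+1+k}$ (the extra $\R^k$ coming from the graph trick needed to make the approximation regular, cf.\ \cite[\S 3]{GSa}) together with a diffeomorphism $\phi\colon M\to M'$ satisfying \ref{lem:Q_tognoli-1}. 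Here one uses Corollary \ref{cor:Q_setminus} to peel off the already-algebraic part $Y$ and keep the complement $\Q$-nonsingular. For \ref{lem:Q_tognoli-2}: extend $\varphi$ smoothly to $\OO$ (Whitney/Tietze), and approximate it — relative to the neighbourhood of $Y\times\{0\}$ where it is already $\Q$-regular, and on the $\Q$-nice algebraic set $Z$ in the sense of Definition \ref{def:Q-pair} and \cite[Corollary 3.3]{GSa} — by a $\Q$-regular $\eta\colon Z\to W$ using that $W$ is $\Q$-nonsingular, so $\Q$-regular maps into $W$ are weakly dense in $\mscr C^\infty$ maps. Then $\eta|_{M'}\circ\phi$ is $\mscr C^\infty_\w$-close to $\varphi$, as required.

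\emph{Main obstacle.} The delicate point is the \textbf{relative} nature of both approximations: one must approximate $F$ (and later $\varphi$) by something $\Q$-regular while leaving it \emph{unchanged} on an open neighbourhood of the algebraic slice $Y\times(-1,1)$ (resp.\ $Y\times\{0\}$), so that the already-built $\Q$-algebraic structure on $Y$ survives verbatim and glues with the newly produced one. Over $\R$ this is classical (Akbulut–King, Tognoli), but over $\Q$ it requires the $\Q$-nice hypothesis to control the ideal-membership relations $\Ii^\infty_{\R^n}(L)\mscr C^\infty(U_a)=\Ii_\Q(L)\mscr C^\infty(U_a)$ locally, together with the fact from \cite[\S 3]{GSa} that the $\Q$-algebraic approximation theorems there are already stated in a relative form. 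The bookkeeping needed to check that the zero set $Z$ is globally $\Q$-nonsingular (not merely nonsingular) — i.e.\ that the $\R|\Q$-Jacobian criterion holds at \emph{every} point, including points near $Y$ — is where most of the care goes, and it is precisely what the explicit $\Q$-regular local equations coming from $\Q$-nonsingularity of $Y$ and $W$ are designed to supply.
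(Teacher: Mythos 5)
The central gap is in your Step 1. You propose to pick a submersion $F\colon\OO\to\R^{s-d}$ on a tubular neighbourhood $\OO$ of $S$ with $F^{-1}(0)=S$, approximate it relative to $Y\times(-1,1)$ by a $\Q$-regular $G$, and take $Z:=\mathcal{Z}_\R(G)$. This does not work: a $\Q$-regular approximation of $F$ on the open set $\OO$ gives no control whatsoever on $\mathcal{Z}_\R(G)$ away from $S$. The numerators of $G$ may vanish elsewhere in $\R^{s+1}$ on components of the wrong dimension and with singularities, and none of the relative approximation lemmas of \cite[\S 3]{GSa} assert that the \emph{global} zero set of an approximant of a defining submersion is nonsingular, let alone $\Q$-nonsingular. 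This is precisely why Tognoli's argument (and the paper's proof) uses the Grassmannian bundle trick, which your sketch skips: take the normal-bundle classifying map $\beta\colon S\to\G_{s-d,d+1}$ (which is $\Q$-regular on $Y\times\{0\}$ because $\beta$ agrees near there with the normal-bundle map of $Y\times\R$), fatten it to $\widetilde\beta\colon U\to\E_{s-d,d+1}$ on a tubular neighbourhood so that $\widetilde\beta^{-1}(\G_{s-d,d+1}\times\{0\})=S$ transversally, and apply \cite[Lemma\,3.8]{GSa} to $\widetilde\beta\times\widetilde\varphi$ rel $Y$. The output $Z$ is an $(s+1)$-dimensional graph-type $\Q$-nonsingular $\Q$-algebraic replacement of the neighbourhood $U$, not a $(d+1)$-dimensional zero set; the algebraic model of $S$ is $S'':=\eta^{-1}((\G\times\{0\})\times W)$ inside $Z$, whose ($\Q$-)nonsingularity follows from transversality and \cite[Lemma\,2.13]{GSa}, not from any direct control of defining equations. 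Note the $Z$ in the statement is this $(s+1)$-dimensional object, not the $(d+1)$-dimensional one in your sketch.

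There is a second gap in your Step 2: ``slicing at a regular value of the last coordinate'' does not produce a compact $\Q$-algebraic $M'$ to which Corollary \ref{cor:Q_setminus} applies. The smooth slice $S''_1\cap\{x_{s+1}=0\}$ equals $M'\sqcup(Y\times\{0\}\times\{0\})$, but the coordinate hyperplane is not overt, so compactness and projective $\Q$-closedness of the slice do not come for free. The paper replaces $\{x_{s+1}=0\}$ by $\mathcal{Z}_\R(u)$ where $u=u'+q$ has an overt numerator $v\in\Q[x]$: one takes an overt $q\in\Q[x]$ with $\mathcal{Z}_\R(q)=Y\times\{0\}\times\{0\}$ and applies \cite[Lemma\,3.7]{GSa} to a smooth interpolation between $\pi_{s+1}$ near $S''_1$ and $q$ far from it, yielding a $\Q$-regular $u'$ vanishing on $Y\times\{0\}\times\{0\}$. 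Only then is $S''_1\cap\mathcal{Z}_\R(u)=M''\sqcup(Y\times\{0\}\times\{0\})$ projectively $\Q$-closed, $\Q$-nonsingular and $\Q$-algebraic simultaneously, which is what Corollary \ref{cor:Q_setminus} needs to peel off $Y$ and deliver $M'$. Your invocation of that corollary is correct in spirit but premature without this overt-slice engineering.
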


\begin{proof}
Let $G:=\G_{s-d,d+1}$ and let $\beta:S \to G$ be the normal bundle map of $S$ in $\R^{s+1}$. Observe that $\beta|_{Y\times\{0\}}$ is a $\Q$-regular map, indeed $\beta$ coincides with the normal bundle map of $Y\times\R$ in $\R^{s+1}$ in a neighborhood of $Y\times\{0\}$ in $\R^{s+1}$.

Let $E:=\E_{s-d,d+1}=\{(A,b) \in G \times \R^{s+1} \, | \, Ab=b\}$ be the universal vector bundle over the Grassmannian manifold $G$. Let $\beta^{\ast}(E):=\{(x,y) \in S \times \R^{s+1} \, | \,\beta(x) y=y\}$ be the pullback bundle and let $\theta:\beta^{\ast}(E) \to \R^{s+1}$ defined by $\theta(x,y):=x+y$. By the Implicit Function Theorem, there exists an open neighborhood $U_0$ in $\beta^{\ast}(E)$ of the zero section $S \times 0$ of $\beta^{\ast}(E)$ and an open neighborhood $U$ of $S$ in $\R^{s+1}$ such that $\theta|_{U_0}:U_0 \to U$ is a diffeomorphism. Define a $\mscr{C}^\infty$ map $\widetilde{\beta}:U \to E$ and a smooth map $\varrho:U \to S$ as follows: for every $x \in U$, let $(z_x,y_x):=(\theta|_{U_0})^{-1}(x)$ and let $N_x:=\beta(z_x)$, then define $\widetilde{\beta}(x):=(N_x,y_x)$ and ${\varrho}(x):=z_x$. Hence, ${\widetilde{\beta}}^{-1}(G\times \{0\})=S$, $\widetilde{\beta}|_{Y}$ is $\Q$-regular and $\widetilde{\beta}$ is transverse to $G \times \{0\}$ in $E$.

Let $\widetilde{\varphi}: U \to W$ be the smooth map defined by $\widetilde{\varphi}:=\varphi\circ {\varrho}$. An application of \cite[Lemma\,3.7]{GSa} with the following substitutions: 
``$W$''$:=E\times W$, ``$L$''$:=Y$, ``$f$''$:=\widetilde{\beta} \times \widetilde{\varphi}$ and ``$U$'' equal to some open neighborhood $U'$ of $S$ in $\R^{s+1}$ relatively compact in $U$ gives a $\Q$-nonsingular $\Q$-algebraic subset $Z$ of $\R^{s+1} \times \R^k$, for some integer $k$, an open subset $Z_0$ of $Z$ and a $\Q$-regular map $\eta:Z \to E\times W$ satisfying:
\begin{itemize}
\item[$(\mr{iii})$]  $Y\times\{0\}\times\{0\}\subset Z_0$, $\pi(Z_0)=U'$, the restriction $\pi|_{Z_0}:Z_0 \to U'$ is a $\mscr{C}^\infty$ diffeomorphism, and the $\mscr{C}^\infty$ map $\sigma:U'\to\R^{s+1+k}$, defined by $\sigma(x,x_{s+1}):=(\pi|_{Z_0})^{-1}(x,x_{s+1})$ for all $(x,x_{s+1})\in U'$, is arbitrarily $\mscr{C}^\infty_\w$ close to $\iota$.
\item[$(\mr{iv})$]  $\eta(x,x_{s+1},0)=(\widetilde{\beta} \times \widetilde{\varphi})(x,x_{s+1})$ for all $(x,x_{s+1})\in Y\times\{0\}$.
\item[$(\mr{v})$]  The $\mscr{C}^\infty$ map $\widehat{\eta}:U'\to E\times W$, defined by $\widehat{\eta}(x,x_{s+1}):=\eta(\sigma(x,x_{s+1}))$, is arbitrarily $\mscr{C}^\infty_\w$ close to $(\widetilde{\beta} \times \widetilde{\varphi})|_{U'}$,
\end{itemize}
where $\pi:\R^{s+1} \times \R^k \to \R^{s+1}$ denotes the natural projection and $\iota:U'\hookrightarrow \R^{s+1} \times \R^k$ denotes the inclusion map.

Choose an open neighborhood $U''$ of $S$ in $\R^{s+1}$ such that $\overline{U''}\subset U'$. Set $Z_1:=(\pi|_{Z_0})^{-1}(U'')$. By $(\mr{iii})$, $(\mr{iv})$, $(\mr{v})$ and \cite[Theorem 14.1.1]{BCR98}, we deduce that $S':=\widehat{\eta}^{\,-1}((G\times\{0\}) \times W)$ is a compact $\mscr{C}^\infty$ submanifold of $U''$ containing $Y\times\{0\}\times\{0\}$ and there exists a $\mscr{C}^\infty$ diffeomorphism $\psi_1:U''\to U''$ arbitrarily $\mscr{C}^\infty_\w$ close to $\mr{id}_{U''}$ such that $\psi_1(S)=S'$ and $\psi=\mr{id}_{U''}$ on $Y\times\{0\}\times\{0\}$. Moreover, \cite[Lemma\,2.19]{GSa} ensures that $S'':=\eta^{-1}((G\times\{0\}) \times W)\subset\R^{s+1+k}$ is a $\Q$-nonsingular $\Q$-algebraic set of dimension $d+1$. 
In addition, the $\mscr{C}^\infty$ embedding $\psi_2:S\to\R^{s+1+k}$ defined by $\psi_2(x,x_{s+1}):=(\pi|_{Z_1})^{-1}(\psi_1(x,x_{s+1}))$, is arbitrarily $\mscr{C}^\infty_\w$ close to the inclusion map $j_S:S\hookrightarrow\R^{s+1+k}$, $\psi_2=j_S$ on $Y\times\{0\}$ and $\psi_2(S)=S''_1$. Note that $S''_1$ is the union of some connected components of $S''$. Let $S''_2:=S''\setminus S''_1$. The coordinate hyperplane $\{x_{s+1}=0\}$ of $\R^{s+1+k}$ is transverse to $S''_1$ in $\R^{s+1+k}$, thus $S''_1\cap\{x_{s+1}=0\}=M' \sqcup (Y\times\{0\}\times\{0\})$ for some compact $\mscr{C}^\infty$ submanifold $M'$ of $\R^{s+1+k}$ and there exists a $\mscr{C}^\infty$ embedding $\psi_3:M\to\R^{s+1+k}$ arbitrarily $\mscr{C}^\infty_\w$ close to the inclusion map $j_M:M\hookrightarrow\R^{s+1+k}$ such that $M'=\psi_3(M)$.

Let $K$ be a compact neighborhood of $S''_1$ in $\R^{s+1+k}$ such that $K\cap S''_2=\varnothing$ and let $\pi_{s+1}:\R^{s+1+k}=\R^{s}\times\R\times\R^{k}\to\R$ be the projection $\pi_{s+1}(x,x_{s+1},y):=x_{s+1}$. Let $q\in\Q[x_1,\dots,x_{s+1+k}]$ be an overt polynomial such that $\mathcal{Z}_\R(q)=Y\times\{0\}\times\{0\}$, $q\geq0$ on $\R^{s+1+k}$ and $q\geq 2$ on $\R^{s+1+k}\setminus K$. Let $K'$ be a compact neighborhood of $S''_1$ in $\mr{int}_{\R^{s+1+k}}(K)$. Using a $\mscr{C}^\infty$ partition of unity subordinated to $\{\mr{int}_{\R^{s+1+k}}(K),\R^{s+1+k} \setminus K'\}$, we can define a $\mscr{C}^\infty$ function $h:\R^{s+1+k}\to\R$ such that $h=\pi_{s+1}$ on $K'$ and $h=q$ on $\R^{s+1+k}\setminus K$. An application of \cite[Lemma\,3.6]{GSa} to $h-q$ gives a $\Q$-regular function $u':\R^{s+1+k}\to\R$ with the following properties:
\begin{itemize}
\item[$(\mr{vi})$] There exist $e\in\N$ and a polynomial $p\in\Q[x_1,\ldots,x_{s+1+k}]$ of degree $\leq 2e$ such that $u'(x)=p(x)(1+|x|_{s+t}^2)^{-e}$ for all $x\in\R^{s+1+k}$. 
\item[$(\mr{vii})$] $Y\times\{0\}\times\{0\}\subset\mathcal{Z}_\R(u')$.
\item[$(\mr{viii})$] $\sup_{x\in\R^{s+1+k}}|h(x)-q(x)-u'(x)|<1$.
\item[$(\mr{ix})$] $u'$ is arbitrarily $\mscr{C}^\infty_\w$ close to $\pi_{s+1}-q$ on $\mr{int}_{\R^{s+1+k}}(K')$.
\end{itemize}

Let $u:\R^{s+1+k}\to\R$ be the $\Q$-regular map given by $u:=u'+q$, and let $v\in\Q[x]$, with $x=(x_1,\ldots,x_{s+1+k})$, be the polynomial $v(x):=q(x)(1+|x|_{s+1+k}^2)^e+p(x)$. Observe that $u(x)=(1+|x|_{s+1+k}^2)^{-e}v(x)$ and $v\in\Q[x]$ is overt. In addition, by $(\mr{ix})$, $(\mr{x})$ \& $(\mr{xi})$, we have that $Y\times\{0\}\times\{0\}\subset\mathcal{Z}_\R(u)$, $u>1$ on $\R^{s+1+k}\setminus K$ and $u$ is arbitrarily $\mscr{C}^\infty_\w$ close to $\pi_{s+1}$ on $\mr{int}_{\R^{s+1+k}}(K')$. Hence, $S''_1\cap \mathcal{Z}_\R(u)=M'' \sqcup X$ for some compact $\mscr{C}^\infty$ submanifold $M''$ of $\R^{s+1+k}$ and there exists a $\mscr{C}^\infty$ embedding $\psi_4:M'\to\R^{s+1+k}$ arbitrarily $\mscr{C}^\infty_\w$ close to the inclusion map $j_{M'}:M'\hookrightarrow\R^{s+1+k}$ such that $M''=\psi_4(M')$. An application of \cite[Lemmas\,2.19\,\&\,2.24(ii)]{GSa} ensure that $M''\sqcup (Y\times\{0\}\times\{0\})\subset\R^{s+1+k}$ is a is projectively $\Q$-closed $\Q$-nonsingular $\Q$-algebraic set. As a consequence, Proposition \ref{cor:Q_setminus} gives that $M''\subset\R^{s+1+k}$ is a projectively $\Q$-closed $\Q$-nonsingular $\Q$-algebraic set. Consider the embedding $\psi:M\to \R^{s+1+k}$ defined as $\psi:=\psi_4\circ\psi_3$. Then, $\psi$ is arbitrarily $\mscr{C}^\infty_\w$ close to $j_M$ and $\psi(M)=M''$. To conclude the proof it suffices to fix ``$M'$''$:=M''$ and ``$\phi$'' as the function $\phi:M\to M''$ defined by $\phi(x)=\psi(x)$.
\end{proof}

Now we are in position to state and prove our relative algebraic approximation theorem `over $\Q$' in the compact case.

\vspace{0.5em}

\begin{theorem}[Relative Nash-Tognoli theorem `over $\Q$']\label{thm:Q_tico_tognoli}
Let $M$ be a compact $\mscr{C}^\infty$ submanifold of $\R^{n}$ of dimension $d$ and let $M_i$ for $i=1,\dots,\ell$, be $\mscr{C}^\infty$ submanifolds of $M$ in general position. Set $m:=\max\{n,2 d+1\}$. Then, for every neighborhood $\mathcal{U}$ of the inclusion map $\iota:M\hookrightarrow\R^{m}$ in $\mscr{C}^\infty_\w(M,\R^{m})$ and for every neighborhood $\mathcal{U}_i$ of the inclusion map $\iota|_{M_i}:M_i\hookrightarrow\R^m$ in $\mscr{C}^\infty_\w(M_i,\R^m)$, for every $i\in\{1,\dots,\ell\}$, there exist a projectively $\Q$-closed $\Q$-nonsingular $\Q$-algebraic set $M'\subset\R^m$, $\Q$-nonsingular $\Q$-algebraic subsets $M'_i$ of $M'$ for $i=1,\dots,\ell$, in general position and a $\mscr{C}^\infty$ diffeomorphism $h:M\to M'$ which simultaneously takes each $M_i$ to $M'_i$ such that, if $\jmath:M'\hookrightarrow\R^m$ denotes the inclusion map, then $\jmath\circ h\in\mathcal{U}$ and $\jmath\circ h|_{M_i}\in\mathcal{U}_i$, for every $i\in\{1,\dots,\ell\}$.

If in addition $M$ and each $M_i$ are compact Nash manifolds, then we can assume $h:M\rightarrow M'$ to be a Nash diffeomorphism extending to a semialgebraic homeomorphism from $\R^m$ to $\R^m$.
\end{theorem}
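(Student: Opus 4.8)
The plan is to deduce the theorem from the absolute $\Q$-algebraic approximation machinery of Lemma~\ref{lem:Q-Tognoli}, applied to a target space that records all the submanifolds simultaneously and fed with the relative bordism produced by Theorem~\ref{thm:Q-spine-cobordism}. Write $c_i$ for the codimension of $M_i$ in $M$. First I would invoke Theorem~\ref{thm:Q-spine-cobordism} for $M\subset\R^n$ and $M_1,\dots,M_\ell$, obtaining an integer $s\geq n$, a projectively $\Q$-closed $\Q$-nonsingular $\Q$-algebraic set $Y\subset\R^s=\R^n\times\R^{s-n}$ of dimension $d$ with $\Q$-nonsingular $\Q$-algebraic subsets $Y_i$ in general position, a compact $\mscr{C}^\infty$ submanifold $S\subset\R^{s+1}$ of dimension $d+1$ with compact $\mscr{C}^\infty$ submanifolds $S_i$ in general position, such that $M\cap Y=\varnothing$, $S\cap(\R^s\times(-1,1))=(M\sqcup Y)\times(-1,1)$, $S_i\cap(\R^s\times(-1,1))=(M_i\sqcup Y_i)\times(-1,1)$, and such that, near $Y\times\{0\}$, $Y$ and the $Y_i$ coincide with the pullback sphere bundles $\mu_\alpha^*(\E^*_\alpha)$ of Lemma~\ref{lem:Q_tico_bordism}, while the Gauss map $\beta_i\colon S_i\to\G_{c_i,n-c_i}$ of $S_i$ in $S$ restricts to a $\Q$-regular map on $Y_i$.

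Next I would fix the target $W:=\prod_{i=1}^{\ell}\E^*_{c_i,n-c_i}$, which is a $\Q$-nonsingular $\Q$-algebraic set by Lemma~\ref{lem:Q_sphere_bundle} together with stability of this class under products, and the $\Q$-nonsingular $\Q$-algebraic subvarieties $W_i\subset W$ of codimension $c_i$ obtained by forcing the $i$-th sphere-bundle coordinate to its ``zero pole'' $\{y_i=0,\ t_i=0\}$; the family $\{W_i\}$ is in general position in $W$. Then I would construct a $\mscr{C}^\infty$ map $\varphi\colon S\to W$ whose $i$-th component is the Thom collapse of a tubular neighborhood of $S_i$ in $S$ built from $\beta_i$, arranged so that: $\varphi$ is transverse to each $W_i$ and to all intersections $\bigcap_{i\in\alpha}W_i$, with $\varphi^{-1}(W_i)=S_i$; $\varphi$ coincides on a neighborhood of $Y\times\{0\}$ with the tautological $\Q$-regular map into $W$ (using the description $Y^\alpha=\mu_\alpha^*(\E^*_\alpha)$ from Theorem~\ref{thm:Q-spine-cobordism} and Lemma~\ref{lem:Q_sphere_pullback}), so that $\varphi|_{Y\times\{0\}}$ is $\Q$-regular; and $\varphi|_{M\times\{0\}}$ is, for every $i$, the Thom collapse of a tubular neighborhood of $M_i$ in $M$, hence $\varphi|_M^{-1}(W_i)=M_i$ transversally.

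Now I would apply Lemma~\ref{lem:Q-Tognoli} with this $W$ and $\varphi$, obtaining $k\in\N$, a $\Q$-nonsingular $\Q$-algebraic set $Z\subset\R^{s+1+k}$, a projectively $\Q$-closed $\Q$-nonsingular $\Q$-algebraic set $M'\subset\R^{s+1+k}$, a $\mscr{C}^\infty$ diffeomorphism $\phi\colon M\to M'$ with $\jmath_{M'}\circ\phi$ arbitrarily $\mscr{C}^\infty_\w$-close to the inclusion of $M$, and a $\Q$-regular map $\eta\colon Z\to W$ with $\eta|_{M'}\circ\phi$ arbitrarily $\mscr{C}^\infty_\w$-close to $\varphi|_M$. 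Set $M'_i:=(\eta|_{M'})^{-1}(W_i)\subset M'$. Since $\eta|_{M'}\circ\phi$ is close to $\varphi|_M$, which is transverse to every $W_i$ and to all their intersections, $\eta|_{M'}$ is transverse as well; hence each $M'_i$ is a $\Q$-nonsingular $\Q$-algebraic set of codimension $c_i$ by the transverse-pullback statement \cite[Lemma\,2.13]{GSa}, the family $\{M'_i\}$ is in general position, and each $M'_i$ is projectively $\Q$-closed by \cite[Lemma\,2.15(b)]{GSa}; the $\Q$-niceness of the sets entering the approximation (disjoint unions of $\Q$-nonsingular sets, and unions as in Lemma~\ref{lem:Q-nice-hyp}) is what legitimizes these steps. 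Moreover $\phi^{-1}(M'_i)$ is $\mscr{C}^\infty$-close to $M_i$ for all $i$, so a small ambient isotopy of $M$ carrying $\{\phi^{-1}(M'_i)\}$ onto $\{M_i\}$, available by the isotopy extension theorem for families in general position (cf.\ \cite{Hir94}), can be absorbed into $\phi$ to give a diffeomorphism $h\colon M\to M'$ with $h(M_i)=M'_i$ for every $i$, still close to the inclusion. Finally, to land in $\R^m$ with $m=\max\{n,2d+1\}$, I would post-compose with a linear projection $\R^{s+1+k}\to\R^m$ defined over $\Q$: since $M'$ is close to a graph over $M\subset\R^n\subseteq\R^m$, the coordinate projection already restricts to an embedding of $M'$, and a small $\Q$-rational perturbation makes it generic while preserving this; because $M'$ and the $M'_i$ are projectively $\Q$-closed, the images are projectively $\Q$-closed $\Q$-nonsingular $\Q$-algebraic sets $\Q$-biregular to the originals and in general position, exactly as in the reduction step of \cite[Theorem\,3.9]{GSa} (a good projection direction exists over $\Q$ because the bad locus is a proper $\Q$-subvariety of a Grassmannian, whose $\Q$-points are Zariski dense). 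Relabelling gives $M'\subset\R^m$, the $M'_i$ and $h$ with $\jmath\circ h\in\mathcal{U}$ and $(\jmath\circ h)|_{M_i}\in\mathcal{U}_i$. For the last assertion, when $M$ and the $M_i$ are compact Nash manifolds one runs the same construction in the semialgebraic category (tubular neighborhoods, transversality and the isotopies may be taken Nash), and the resulting Nash diffeomorphism $h\colon M\to M'$ extends to a semialgebraic homeomorphism of $\R^m$ by standard extension results for Nash diffeomorphisms, e.g.\ \cite{CS92}.

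The main obstacle I expect is the construction of the encoding map $\varphi\colon S\to W$: it must be $\Q$-regular on $Y\times\{0\}$ (this is forced by Theorem~\ref{thm:Q-spine-cobordism} and is precisely the hypothesis Lemma~\ref{lem:Q-Tognoli} needs), simultaneously transverse to the subvarieties $W_i$ with $\varphi^{-1}(W_i)=S_i$ and with all intersections $\bigcap_{i\in\alpha}W_i$ pulled back cleanly, and yet restrict on $M$ to the Thom data honestly encoding $M_1,\dots,M_\ell$, all arranged so that after the $\Q$-algebraic approximation the $M'_i$ reappear inside $M'$ in general position. Reconciling the rigid $\Q$-regular behavior prescribed on $Y$ with the transversality demands on all of $S$, and then carrying out the simultaneous isotopy and the final $\Q$-linear projection while keeping control of closeness to the original inclusion, are the delicate points.
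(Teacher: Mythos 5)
Your proposal takes a genuinely different route from the paper's. You feed Lemma~\ref{lem:Q-Tognoli} with the \emph{universal} target $W=\prod_i\E^*_{c_i,n-c_i}$, encoding the submanifolds through a Thom--Pontryagin-type map $\varphi$ transverse to the ``south-pole'' sections $W_i$, and then recover $M'_i$ as $(\eta|_{M'})^{-1}(W_i)$. The paper instead builds a \emph{tailored} target $W=X^1\times\dots\times X^\ell$: for each $i$ it first produces a projectively $\Q$-closed $\Q$-nonsingular $\Q$-algebraic model $X_i$ of $S_i$ (via \cite[Theorem~3.9]{GSa} applied to $\beta_i$), then the pullback sphere bundle $Z_i=\gamma_i^*(\E^*_{c_i,s+1-c_i})$ over $X_i$, glues this to a smooth model $N_i\cong S$ containing $(X_i\times\{0\})\cup Y'_i$, verifies $\Q$-niceness of that union via Lemma~\ref{lem:Q-nice-hyp}, and applies \cite[Theorem~3.9]{GSa} a second time to get a $\Q$-algebraic $X^i$ together with a \emph{diffeomorphism} $\varphi_i:S\to X^i$ carrying $S_i$ to the $\Q$-nonsingular subvariety $X'_i$. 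Only then is Lemma~\ref{lem:Q-Tognoli} invoked, and the $M'_i$ are extracted by reopening its proof (tracing $S''_i$, $M''_i$ through the transverse intersections that build $M'$), not by treating it as a black box. Your approach is conceptually lighter — one application of the approximation machinery, no iterated use of GSa's Theorem 3.9, no intermediate $N_i$ — and, if the map $\varphi$ can be produced, it buys a cleaner statement.

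However, the step you flag as ``the main obstacle'' is, in my view, a genuine gap and not merely a delicate point: you never construct the map $\varphi$, and there is a real homotopy-theoretic tension there. The $\Q$-regularity requirement in Lemma~\ref{lem:Q-Tognoli} forces $\varphi_i|_{Y^\alpha}$ (for $i\in\alpha$) to be the literal tautological projection $Y^\alpha=\mu_\alpha^*(\E^*_\alpha)\to\E^*_{c_i,n-c_i}$, which on the complement of a tube around $Y^\alpha_i$ lands in $\E^*_{c_i,n-c_i}\setminus W_i\simeq\G_{c_i,n-c_i}$ by way of the classifying map $\mu^\alpha_i$ — this restriction is in general \emph{not} null-homotopic. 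A classical Thom collapse, by contrast, collapses the complement of the tube; so $\varphi$ on the interior of $S$ cannot simply be ``the Thom collapse'', and you owe an argument (rel $S_i$, preserving transversality and the exact preimage condition $\varphi^{-1}(W_i)=S_i$) that the tautological boundary data on $Y$ extends over $S\setminus\nu(S_i)$ into $\E^*_{c_i,n-c_i}\setminus W_i$. Even if this obstruction vanishes here (because both $\varphi^{\mathrm{taut}}|_Y$ and a collapse are classifying-type maps for the same normal data), the argument must be made, and it must be made simultaneously for all $i$ and all the intersections $\bigcap_{i\in\alpha}W_i$. This is precisely the difficulty the paper sidesteps by making each $\varphi_i:S\to X^i$ a \emph{diffeomorphism} onto a purpose-built $\Q$-algebraic copy of $S$: there is then no Thom collapse to reconcile, the preimage of $X'_i$ is $S_i$ automatically, and the $\Q$-regular behavior on $Y'_i$ is built in by construction (via Lemma~\ref{lem:Q-nice-hyp} and the second application of \cite[Theorem~3.9]{GSa}). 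Secondarily, you extract $M'_i=(\eta|_{M'})^{-1}(W_i)$ by quoting \cite[Lemma~2.13]{GSa} as a black box, but the paper found it necessary to reopen the proof of Lemma~\ref{lem:Q-Tognoli} to verify the $\R|\Q$-transversality of $M''$ with the sets $S''_i$ inside $Z$; it is not obvious that the black-box statement gives you everything you need about the intersection $M'\cap\eta^{-1}(W_i)$, as opposed to $\eta^{-1}(W_i)$ alone.
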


\begin{proof}
Let $c_i$ be the codimension of $M_i$ in $M$ for $i\in\{1,\dots,\ell\}$. An application of Theorem \ref{thm:Q-spine-cobordism} gives $s\in\N$, a projectively $\Q$-closed $\Q$-nonsingular $\Q$-algebraic set $Y\subset\R^{s}:=\R^{n}\times\R^{s-n}$ of dimension $d$, $\Q$-nonsingular $\Q$-algebraic subsets $Y_i$, for $i\in\{1,\dots,\ell\}$, of $Y$ in general position, a compact $\mscr{C}^\infty$ submanifold $S$ of $\R^{s+1}=\R^{s}\times\R$ of dimension $m+1$ and compact $\mscr{C}^\infty$ submanifolds $S_i$ of $S$ of codimension $c_i$, for $i\in\{1,\dots,\ell\}$, in general position satisfying Theorem \ref{thm:Q-spine-cobordism}\ref{thm:Q-spine-cobordism-1}-\ref{thm:Q-spine-cobordism-4}.

\begin{figure}[h!]
\centering

\tikzset{every picture/.style={line width=0.75pt}} 

\begin{tikzpicture}[x=0.75pt,y=0.75pt,yscale=-1,xscale=1]

\draw  [color={rgb, 255:red, 208; green, 2; blue, 27 }  ,draw opacity=1 ][line width=0.75]  (310,169) .. controls (310,162.92) and (320.3,158) .. (333,158) .. controls (345.7,158) and (356,162.92) .. (356,169) .. controls (356,175.08) and (345.7,180) .. (333,180) .. controls (320.3,180) and (310,175.08) .. (310,169) -- cycle ;
\draw  [color={rgb, 255:red, 208; green, 2; blue, 27 }  ,draw opacity=1 ][line width=0.75]  (449,170) .. controls (449,164.48) and (458.63,160) .. (470.5,160) .. controls (482.37,160) and (492,164.48) .. (492,170) .. controls (492,175.52) and (482.37,180) .. (470.5,180) .. controls (458.63,180) and (449,175.52) .. (449,170) -- cycle ;
\draw [line width=0.75]    (213,170) .. controls (212,110) and (309,110) .. (310,169) ;
\draw [color={rgb, 255:red, 0; green, 0; blue, 0 }  ,draw opacity=1 ][line width=0.75]    (195,177) .. controls (194,77) and (336,87) .. (325,178) ;
\draw    (93,239) -- (493,240) ;
\draw    (493,240) -- (549,149) ;
\draw    (93,239) -- (149,148) ;
\draw [line width=0.75]  [dash pattern={on 4.5pt off 4.5pt}]  (213,170) .. controls (210,231) and (312,237) .. (310,169) ;
\draw [color={rgb, 255:red, 0; green, 0; blue, 0 }  ,draw opacity=1 ][line width=0.75]  [dash pattern={on 4.5pt off 4.5pt}]  (195,177) .. controls (200,260) and (328,252) .. (325,178) ;
\draw [color={rgb, 255:red, 0; green, 0; blue, 0 }  ,draw opacity=1 ]   (166,170) .. controls (166,-22) and (488,-16.5) .. (492,170) ;
\draw [color={rgb, 255:red, 0; green, 0; blue, 0 }  ,draw opacity=1 ]   (269,66) .. controls (274,73) and (286,73) .. (292,66) ;
\draw [color={rgb, 255:red, 0; green, 0; blue, 0 }  ,draw opacity=1 ]   (274,69) .. controls (277,64) and (285,66) .. (287,69) ;
\draw [color={rgb, 255:red, 0; green, 0; blue, 0 }  ,draw opacity=1 ]   (370,67) .. controls (374,72) and (382,72) .. (390,66) ;
\draw [color={rgb, 255:red, 0; green, 0; blue, 0 }  ,draw opacity=1 ]   (375,70) .. controls (377,66) and (383,67) .. (385,69) ;
\draw [color={rgb, 255:red, 0; green, 0; blue, 0 }  ,draw opacity=1 ]   (166,170) .. controls (169,144) and (215,153) .. (213,170) ;
\draw [color={rgb, 255:red, 0; green, 0; blue, 0 }  ,draw opacity=1 ]   (166,170) .. controls (168.5,189) and (182,170) .. (195,177) ;
\draw [color={rgb, 255:red, 0; green, 0; blue, 0 }  ,draw opacity=1 ][line width=0.75]    (195,177) .. controls (203,181) and (212.5,182) .. (213,170) ;
\draw [color={rgb, 255:red, 0; green, 0; blue, 0 }  ,draw opacity=1 ]   (275,276.75) .. controls (278,272) and (285,272.17) .. (289,274) ;
\draw [color={rgb, 255:red, 0; green, 0; blue, 0 }  ,draw opacity=1 ]   (370,270) .. controls (377,279.1) and (384,276.82) .. (390,270) ;
\draw [color={rgb, 255:red, 0; green, 0; blue, 0 }  ,draw opacity=1 ]   (271,273.38) .. controls (276,282.13) and (288,280.67) .. (293,269) ;
\draw [color={rgb, 255:red, 0; green, 0; blue, 0 }  ,draw opacity=1 ]   (374,274.55) .. controls (377,271) and (382,272) .. (385,273.41) ;
\draw    (181,154) .. controls (184,102) and (267,1) .. (332,65) ;
\draw    (470.5,160) .. controls (467,102) and (394,137) .. (332,65) ;
\draw    (356,169) .. controls (357,127) and (452,126) .. (449,170) ;
\draw  [dash pattern={on 4.5pt off 4.5pt}]  (356,169) .. controls (357,220) and (454,211) .. (449,170) ;
\draw  [dash pattern={on 4.5pt off 4.5pt}]  (379,239) .. controls (425,221) and (475,221) .. (470.5,160) ;
\draw [color={rgb, 255:red, 0; green, 0; blue, 0 }  ,draw opacity=1 ] [dash pattern={on 4.5pt off 4.5pt}]  (181,154) .. controls (175,191) and (188,223) .. (199,241) ;
\draw    (199,241) .. controls (264,338) and (326,303) .. (337,279) ;
\draw    (337,279) .. controls (345,259) and (358,248) .. (379,239) ;
\draw  [dash pattern={on 4.5pt off 4.5pt}]  (166,170) .. controls (166,202) and (171,221) .. (180,239) ;
\draw  [dash pattern={on 4.5pt off 4.5pt}]  (492,170) .. controls (493,198) and (485,220) .. (476,239) ;
\draw    (180,239) .. controls (240,352) and (421,348) .. (476,239) ;

\draw (141,158) node [anchor=north west][inner sep=0.75pt]    {$M$};
\draw (197,182) node [anchor=north west][inner sep=0.75pt]    {$M_{i}$};
\draw (449,41.4) node [anchor=north west][inner sep=0.75pt]    {$S\subset \mathbb{R}^{s+1}$};
\draw (227,89.4) node [anchor=north west][inner sep=0.75pt]    {$S_{i}$};
\draw (390,89.4) node [anchor=north west][inner sep=0.75pt]    {$S_{j}$};
\draw (542,162) node [anchor=north west][inner sep=0.75pt]    {$Y\subset \mathbb{R}^{s}$};
\draw (327,182) node [anchor=north west][inner sep=0.75pt]    {$Y_{\alpha }$};
\draw (361,158) node [anchor=north west][inner sep=0.75pt]    {$Y_{i}^{\alpha }$};
\draw (472,182) node [anchor=north west][inner sep=0.75pt]    {$Y_{\beta }$};
\draw (498,158) node [anchor=north west][inner sep=0.75pt]    {$Y_{j}^{\beta }$};
\
\end{tikzpicture}
\caption{Starting situation after the application of Theorem \ref{thm:Q-spine-cobordism}.}\label{fig:relative-tognoli-1}
\end{figure}
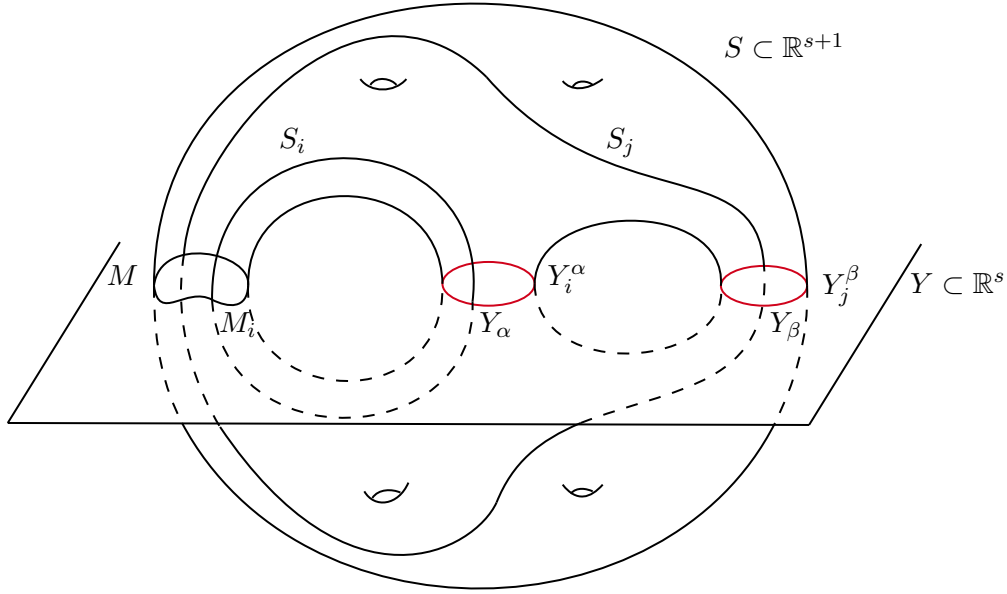

Consider the map $\beta_i:S_i\to\G_{c_i,s+1-c_i}$ classifying the normal bundle of $S_i$ in $S$. By Theorem \ref{thm:Q-spine-cobordism}\ref{thm:Q-spine-cobordism-4} we have that $\beta_i|_{Y_i}$ is a $\Q$-regular map extending the codomain from $\G_{c_i,n-c_i}$ to $\G_{c_i,s+1-c_i}$. An application of \cite[Theorem\,3.9]{GSa}, with $``L":=Y_i\times\{0\}$, $``M":=S_i$, $``W":=\G_{c_i,s+1-c_i}$ and $``f":=\beta_i$ gives $t\in\N$, a projectively $\Q$-closed $\Q$-nonsingular $\Q$-algebraic subset $X_i $ of $\R^{s+1+t}$, a diffeomorphism $\rho_i:S_i\to X_i$ and a $\Q$-regular map $\gamma_i:X_i\rightarrow \G_{c_i,s+1-c_i}$ satisfying \cite[Theorem\,3.9(i)-(iii)]{GSa}. In particular, $(Y_i\times\{0\})\times\{0\}\subset X_i$, $\rho_i(x)=(x,0)$ and $\gamma_i(x,0)=\beta_i|_{Y_i\times\{0\}}(x)$ for every $x\in Y_i\times\{0\}$. 

Consider the pullback bundle $Z_i:=\gamma_i^*(\E_{c_i,s+1-c_i}^*)$. By Lemma \ref{lem:Q_sphere_pullback}, $Z_i$ is a projectively $\Q$-closed $\Q$-nonsingular $\Q$-algebraic subset of $\R^{s+1+t}\times\R^{s+1+t+1}$ and it contains those subsets $Y^\alpha$ of $Y$ such that $i\in\alpha$, by  Theorem \ref{thm:Q-spine-cobordism}\ref{thm:Q-spine-cobordism-3}\ref{thm:Q-spine-cobordism-4} and Lemma \ref{lem:Q_tico_bordism}\ref{Q_tico_bordism_d}. More precisely, following the notations of Theorem \ref{thm:Q-spine-cobordism}, we have that
\[
Y'^{\alpha}:=(\gamma_i|_{(Y_i^{\alpha}+v_\alpha)\times\{0\}\times\{0\}})^*(\E_{c_i,s+1-c_i}^*)
\]
is contained in $Z_i$, for every $\alpha\in A_i$, and is $\Q$-biregularly isomorphic to $Y^{\alpha}$ fixing each $x\in Y_\alpha$. 
Let $Y'_i\subset\R^s\times\R\times\R^t\times\R^{s+1+t+1}$ be defined as
\[
Y'_i:=\Big(\bigsqcup_{\alpha\in A_i} Y'^{\alpha}\Big)\sqcup \Big(\bigsqcup_{\alpha\notin A_i} (Y^\alpha+v_\alpha)\times\{0\}\times\{0\}\times\{0\}\Big).
\]
Since $\gamma_i$ can be chosen such that $\gamma_i\circ\rho$ is arbitrarily $\mscr{C}^\infty_{\w}$ close to $\beta_i$, those maps are homotopic, thus the normal bundle of $S_i$ in $S$ and the normal bundle of $X_i\times\{0\}\times\{0\}$ in $Z_i$ are equivalent. Hence, the germ $(S,S_i\cup (Y\times\{0\}))$ of $S$ at $S_i\cup (Y\times\{0\})$ is diffeomorphic to the germ 
\[
\Big(Z_i\cup (\bigsqcup_{\alpha\notin A_i}(Y^\alpha+v_\alpha)\times\R\times\{0\}\times\{0\}),(X_i\times\{0\})\cup Y'_i\Big)
\]
of the $\Q$-algebraic set $Z_i\cup (\bigsqcup_{\alpha\notin A_i}(Y^\alpha+v_\alpha)\times\R\times\{0\}\times\{0\})$ $\Q$-nonsingular locally at $(X_i\times\{0\})\cup Y'_i$.

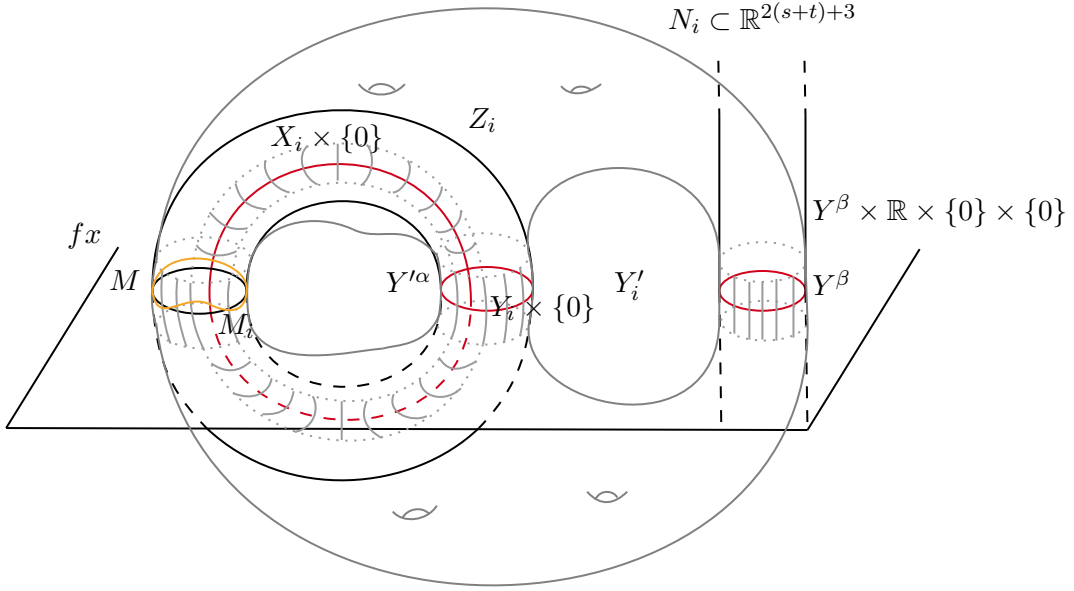
\begin{figure}[h!]
\centering
\tikzset{every picture/.style={line width=0.75pt}} 

\begin{tikzpicture}[x=0.75pt,y=0.75pt,yscale=-1,xscale=1]

\draw  [color={rgb, 255:red, 208; green, 2; blue, 27 }  ,draw opacity=1 ][line width=0.75]  (310,169) .. controls (310,162.92) and (320.3,158) .. (333,158) .. controls (345.7,158) and (356,162.92) .. (356,169) .. controls (356,175.08) and (345.7,180) .. (333,180) .. controls (320.3,180) and (310,175.08) .. (310,169) -- cycle ;
\draw  [color={rgb, 255:red, 208; green, 2; blue, 27 }  ,draw opacity=1 ][line width=0.75]  (449,170) .. controls (449,164.48) and (458.63,160) .. (470.5,160) .. controls (482.37,160) and (492,164.48) .. (492,170) .. controls (492,175.52) and (482.37,180) .. (470.5,180) .. controls (458.63,180) and (449,175.52) .. (449,170) -- cycle ;
\draw [line width=0.75]    (213,170) .. controls (212,110) and (309,110) .. (310,169) ;
\draw [color={rgb, 255:red, 208; green, 2; blue, 27 }  ,draw opacity=1 ][line width=0.75]    (195,180) .. controls (185,85) and (327,79) .. (325,178) ;
\draw [line width=0.75]    (93,239) -- (493,240) ;
\draw [line width=0.75]    (493,240) -- (549,149) ;
\draw [line width=0.75]    (93,239) -- (149,148) ;
\draw [line width=0.75]  [dash pattern={on 4.5pt off 4.5pt}]  (213,170) .. controls (211,236) and (315,233) .. (310,169) ;
\draw [color={rgb, 255:red, 208; green, 2; blue, 27 }  ,draw opacity=1 ][line width=0.75]  [dash pattern={on 4.5pt off 4.5pt}]  (195,180) .. controls (204,251) and (323,256) .. (325,178) ;
\draw  [line width=0.75]  (166,170) .. controls (166,163.65) and (176.52,158.5) .. (189.5,158.5) .. controls (202.48,158.5) and (213,163.65) .. (213,170) .. controls (213,176.35) and (202.48,181.5) .. (189.5,181.5) .. controls (176.52,181.5) and (166,176.35) .. (166,170) -- cycle ;
\draw [line width=0.75]    (166,170) .. controls (165,49) and (355,49) .. (356,169) ;
\draw [line width=0.75]  [dash pattern={on 4.5pt off 4.5pt}]  (166,170) .. controls (167,211) and (180,225) .. (193,239) ;
\draw [line width=0.75]    (193,239) .. controls (233,277) and (296,271) .. (329,239) ;
\draw [line width=0.75]  [dash pattern={on 4.5pt off 4.5pt}]  (356,169) .. controls (354,191) and (354,213) .. (329,239) ;
\draw [line width=0.75]    (449,82) -- (449,170) ;
\draw [line width=0.75]  [dash pattern={on 4.5pt off 4.5pt}]  (492,170) -- (493,240) ;
\draw [line width=0.75]    (492,82) -- (492,170) ;
\draw [line width=0.75]  [dash pattern={on 4.5pt off 4.5pt}]  (449,170) -- (450,240) ;
\draw [color={rgb, 255:red, 155; green, 155; blue, 155 }  ,draw opacity=1 ][line width=0.75]  [dash pattern={on 0.84pt off 2.51pt}]  (206,162) .. controls (210,107) and (300,94) .. (317,161) ;
\draw  [color={rgb, 255:red, 155; green, 155; blue, 155 }  ,draw opacity=1 ][dash pattern={on 0.84pt off 2.51pt}][line width=0.75]  (168,154.5) .. controls (168,148.15) and (178.52,143) .. (191.5,143) .. controls (204.48,143) and (215,148.15) .. (215,154.5) .. controls (215,160.85) and (204.48,166) .. (191.5,166) .. controls (178.52,166) and (168,160.85) .. (168,154.5) -- cycle ;
\draw  [color={rgb, 255:red, 155; green, 155; blue, 155 }  ,draw opacity=1 ][dash pattern={on 0.84pt off 2.51pt}][line width=0.75]  (168,187.5) .. controls (168,181.15) and (178.52,176) .. (191.5,176) .. controls (204.48,176) and (215,181.15) .. (215,187.5) .. controls (215,193.85) and (204.48,199) .. (191.5,199) .. controls (178.52,199) and (168,193.85) .. (168,187.5) -- cycle ;
\draw  [color={rgb, 255:red, 155; green, 155; blue, 155 }  ,draw opacity=1 ][dash pattern={on 0.84pt off 2.51pt}][line width=0.75]  (308,186) .. controls (308,179.92) and (318.3,175) .. (331,175) .. controls (343.7,175) and (354,179.92) .. (354,186) .. controls (354,192.08) and (343.7,197) .. (331,197) .. controls (318.3,197) and (308,192.08) .. (308,186) -- cycle ;
\draw  [color={rgb, 255:red, 155; green, 155; blue, 155 }  ,draw opacity=1 ][dash pattern={on 0.84pt off 2.51pt}][line width=0.75]  (450,185) .. controls (450,179.48) and (459.63,175) .. (471.5,175) .. controls (483.37,175) and (493,179.48) .. (493,185) .. controls (493,190.52) and (483.37,195) .. (471.5,195) .. controls (459.63,195) and (450,190.52) .. (450,185) -- cycle ;
\draw  [color={rgb, 255:red, 155; green, 155; blue, 155 }  ,draw opacity=1 ][dash pattern={on 0.84pt off 2.51pt}][line width=0.75]  (308,152) .. controls (308,145.92) and (318.3,141) .. (331,141) .. controls (343.7,141) and (354,145.92) .. (354,152) .. controls (354,158.08) and (343.7,163) .. (331,163) .. controls (318.3,163) and (308,158.08) .. (308,152) -- cycle ;
\draw  [color={rgb, 255:red, 155; green, 155; blue, 155 }  ,draw opacity=1 ][dash pattern={on 0.84pt off 2.51pt}][line width=0.75]  (449,155.5) .. controls (449,149.98) and (458.63,145.5) .. (470.5,145.5) .. controls (482.37,145.5) and (492,149.98) .. (492,155.5) .. controls (492,161.02) and (482.37,165.5) .. (470.5,165.5) .. controls (458.63,165.5) and (449,161.02) .. (449,155.5) -- cycle ;
\draw [color={rgb, 255:red, 155; green, 155; blue, 155 }  ,draw opacity=1 ][line width=0.75]  [dash pattern={on 0.84pt off 2.51pt}]  (191.5,199) .. controls (214,254) and (306,268) .. (331,197) ;
\draw [color={rgb, 255:red, 155; green, 155; blue, 155 }  ,draw opacity=1 ][line width=0.75]  [dash pattern={on 0.84pt off 2.51pt}]  (209,196) .. controls (228,231) and (295,240) .. (315,194) ;
\draw [color={rgb, 255:red, 155; green, 155; blue, 155 }  ,draw opacity=1 ][line width=0.75]  [dash pattern={on 0.84pt off 2.51pt}]  (185.5,163) .. controls (191,79) and (319,68) .. (331,163) ;
\draw [color={rgb, 255:red, 155; green, 155; blue, 155 }  ,draw opacity=1 ][line width=0.75]    (171,160) .. controls (169.5,173) and (171.5,186.5) .. (175,196.5) ;
\draw [color={rgb, 255:red, 155; green, 155; blue, 155 }  ,draw opacity=1 ][line width=0.75]    (178.5,164) .. controls (177,176) and (179,186) .. (183,198) ;
\draw [color={rgb, 255:red, 155; green, 155; blue, 155 }  ,draw opacity=1 ][line width=0.75]    (185,166) .. controls (184.5,180) and (187,190) .. (191.5,199) ;
\draw [color={rgb, 255:red, 155; green, 155; blue, 155 }  ,draw opacity=1 ][line width=0.75]    (204.5,165) .. controls (202.5,175) and (204.5,186.5) .. (209,196) ;
\draw [color={rgb, 255:red, 128; green, 128; blue, 128 }  ,draw opacity=1 ][line width=0.75]    (215,154.5) .. controls (212.5,165.5) and (213,181) .. (215,187.5) ;
\draw [color={rgb, 255:red, 155; green, 155; blue, 155 }  ,draw opacity=1 ][line width=0.75]    (188,147) .. controls (192,153) and (201.5,153.5) .. (208,150) ;
\draw [color={rgb, 255:red, 155; green, 155; blue, 155 }  ,draw opacity=1 ][line width=0.75]    (194.5,130.5) .. controls (198.5,134.5) and (202.5,140.5) .. (213.5,138.5) ;
\draw [color={rgb, 255:red, 155; green, 155; blue, 155 }  ,draw opacity=1 ][line width=0.75]    (207.5,116) .. controls (209.5,124) and (213,128) .. (223,129) ;
\draw [color={rgb, 255:red, 155; green, 155; blue, 155 }  ,draw opacity=1 ][line width=0.75]    (222.5,104.5) .. controls (220.5,110.5) and (227,119) .. (234.5,120.5) ;
\draw [color={rgb, 255:red, 155; green, 155; blue, 155 }  ,draw opacity=1 ][line width=0.75]    (243.5,97) .. controls (237.5,102.5) and (241.5,113.5) .. (248.5,116.5) ;
\draw [color={rgb, 255:red, 155; green, 155; blue, 155 }  ,draw opacity=1 ][line width=0.75]    (310.5,117.5) .. controls (310.5,122) and (306,129.5) .. (299,129.5) ;
\draw [color={rgb, 255:red, 155; green, 155; blue, 155 }  ,draw opacity=1 ][line width=0.75]    (293.5,104.5) .. controls (297,110) and (291,120.5) .. (286,120.5) ;
\draw [color={rgb, 255:red, 155; green, 155; blue, 155 }  ,draw opacity=1 ][line width=0.75]    (312.5,148.5) .. controls (316.5,153.5) and (326.5,150) .. (327,145.5) ;
\draw [color={rgb, 255:red, 155; green, 155; blue, 155 }  ,draw opacity=1 ][line width=0.75]    (273,96.5) .. controls (277.5,101.5) and (276,113) .. (270,117) ;
\draw [color={rgb, 255:red, 155; green, 155; blue, 155 }  ,draw opacity=1 ][line width=0.75]    (258.5,96) -- (258.5,114.5) ;
\draw [color={rgb, 255:red, 155; green, 155; blue, 155 }  ,draw opacity=1 ][line width=0.75]    (308.5,140.5) .. controls (315.5,141) and (320.5,135.5) .. (320,129.5) ;
\draw [color={rgb, 255:red, 155; green, 155; blue, 155 }  ,draw opacity=1 ][line width=0.75]    (273,225.5) .. controls (270,232.5) and (271.5,239) .. (280.5,244) ;
\draw [color={rgb, 255:red, 155; green, 155; blue, 155 }  ,draw opacity=1 ][line width=0.75]    (195.5,206.5) .. controls (202.5,211.5) and (209,211) .. (215,205.5) ;
\draw [color={rgb, 255:red, 155; green, 155; blue, 155 }  ,draw opacity=1 ][line width=0.75]    (221,232) .. controls (231.5,232.5) and (235,228.5) .. (236.5,220.5) ;
\draw [color={rgb, 255:red, 155; green, 155; blue, 155 }  ,draw opacity=1 ][line width=0.75]    (237,241) .. controls (247,239) and (249.5,229) .. (246,223) ;
\draw [color={rgb, 255:red, 155; green, 155; blue, 155 }  ,draw opacity=1 ][line width=0.75]    (260.5,225.5) -- (260.5,245) ;
\draw [color={rgb, 255:red, 155; green, 155; blue, 155 }  ,draw opacity=1 ][line width=0.75]    (286.5,222) .. controls (282.5,231.5) and (295.5,236.5) .. (303,233) ;
\draw [color={rgb, 255:red, 155; green, 155; blue, 155 }  ,draw opacity=1 ][line width=0.75]    (299.5,214) .. controls (299.5,224) and (312.5,226.5) .. (320,219) ;
\draw [color={rgb, 255:red, 155; green, 155; blue, 155 }  ,draw opacity=1 ][line width=0.75]    (309,203.5) .. controls (312.5,209) and (317,212) .. (327.5,205.5) ;
\draw [color={rgb, 255:red, 155; green, 155; blue, 155 }  ,draw opacity=1 ][line width=0.75]    (206.5,221.5) .. controls (216,225.5) and (224.5,221.5) .. (225.5,214) ;
\draw [color={rgb, 255:red, 155; green, 155; blue, 155 }  ,draw opacity=1 ][line width=0.75]    (317,161) .. controls (319,169.5) and (319,187.5) .. (315,194) ;
\draw [color={rgb, 255:red, 155; green, 155; blue, 155 }  ,draw opacity=1 ][line width=0.75]    (331,163) .. controls (333,173.5) and (333.5,186.5) .. (331,197) ;
\draw [color={rgb, 255:red, 155; green, 155; blue, 155 }  ,draw opacity=1 ][line width=0.75]    (349.5,158.5) .. controls (351.5,167.5) and (352,183.5) .. (349.5,192.5) ;
\draw [color={rgb, 255:red, 155; green, 155; blue, 155 }  ,draw opacity=1 ][line width=0.75]    (341.5,162) .. controls (343.5,172.5) and (343.5,185.5) .. (341.5,196) ;
\draw [color={rgb, 255:red, 155; green, 155; blue, 155 }  ,draw opacity=1 ][line width=0.75]    (477.5,164) -- (477.5,195) ;
\draw [color={rgb, 255:red, 155; green, 155; blue, 155 }  ,draw opacity=1 ][line width=0.75]    (456.5,162.5) -- (456.5,191.5) ;
\draw [color={rgb, 255:red, 155; green, 155; blue, 155 }  ,draw opacity=1 ][line width=0.75]    (464,165.5) -- (464,194.5) ;
\draw [color={rgb, 255:red, 155; green, 155; blue, 155 }  ,draw opacity=1 ][line width=0.75]    (470.5,165.5) -- (470.5,194.5) ;
\draw [color={rgb, 255:red, 155; green, 155; blue, 155 }  ,draw opacity=1 ][line width=0.75]    (486,162.5) -- (486,193.5) ;
\draw [color={rgb, 255:red, 128; green, 128; blue, 128 }  ,draw opacity=1 ][line width=0.75]    (449,155.5) -- (449,184.5) ;
\draw [color={rgb, 255:red, 128; green, 128; blue, 128 }  ,draw opacity=1 ][line width=0.75]    (492,155.5) -- (493,185) ;
\draw [color={rgb, 255:red, 128; green, 128; blue, 128 }  ,draw opacity=1 ][line width=0.75]    (354,152) .. controls (357,161.5) and (356,178.5) .. (354,186) ;
\draw [color={rgb, 255:red, 128; green, 128; blue, 128 }  ,draw opacity=1 ][line width=0.75]    (308,152) .. controls (310.5,160) and (311,177.5) .. (308,186) ;
\draw [color={rgb, 255:red, 128; green, 128; blue, 128 }  ,draw opacity=1 ][line width=0.75]    (168,154.5) .. controls (165.5,164) and (164.5,178.5) .. (168,187.5) ;
\draw [color={rgb, 255:red, 128; green, 128; blue, 128 }  ,draw opacity=1 ][line width=0.75]    (168,154.5) .. controls (183,-17) and (490,-12) .. (492,155.5) ;
\draw [color={rgb, 255:red, 128; green, 128; blue, 128 }  ,draw opacity=1 ][line width=0.75]    (354,152) .. controls (344,96) and (450,90) .. (449,155.5) ;
\draw [color={rgb, 255:red, 128; green, 128; blue, 128 }  ,draw opacity=1 ][line width=0.75]    (269,66) .. controls (274,73) and (286,73) .. (292,66) ;
\draw [color={rgb, 255:red, 128; green, 128; blue, 128 }  ,draw opacity=1 ][line width=0.75]    (274,69) .. controls (277,64) and (285,66) .. (287,69) ;
\draw [color={rgb, 255:red, 128; green, 128; blue, 128 }  ,draw opacity=1 ][line width=0.75]    (370,67) .. controls (374,72) and (382,72) .. (390,66) ;
\draw [color={rgb, 255:red, 128; green, 128; blue, 128 }  ,draw opacity=1 ][line width=0.75]    (375,70) .. controls (377,66) and (383,67) .. (385,69) ;
\draw [color={rgb, 255:red, 128; green, 128; blue, 128 }  ,draw opacity=1 ][line width=0.75]    (215,154.5) .. controls (221,136) and (246,130) .. (266,139) ;
\draw [color={rgb, 255:red, 128; green, 128; blue, 128 }  ,draw opacity=1 ][line width=0.75]    (266,139) .. controls (280,146) and (292,134) .. (308,152) ;
\draw [color={rgb, 255:red, 128; green, 128; blue, 128 }  ,draw opacity=1 ][line width=0.75]    (168,187.5) .. controls (192,369) and (498,355) .. (493,185) ;
\draw [color={rgb, 255:red, 128; green, 128; blue, 128 }  ,draw opacity=1 ][line width=0.75]    (354,186) .. controls (348,230) and (449,251.5) .. (449,184.5) ;
\draw [color={rgb, 255:red, 128; green, 128; blue, 128 }  ,draw opacity=1 ][line width=0.75]    (215,187.5) .. controls (225,211) and (267,200) .. (281,199) ;
\draw [color={rgb, 255:red, 128; green, 128; blue, 128 }  ,draw opacity=1 ][line width=0.75]    (281,199) .. controls (288,197) and (304,198) .. (310,180) ;
\draw [color={rgb, 255:red, 245; green, 166; blue, 35 }  ,draw opacity=1 ][line width=0.75]    (166,170) .. controls (169,144) and (215,153) .. (213,170) ;
\draw [color={rgb, 255:red, 245; green, 166; blue, 35 }  ,draw opacity=1 ][line width=0.75]    (166,170) .. controls (168.5,189) and (182,170) .. (195,177) ;
\draw [color={rgb, 255:red, 245; green, 166; blue, 35 }  ,draw opacity=1 ][line width=0.75]    (195,177) .. controls (203,181) and (212.5,182) .. (213,170) ;
\draw [color={rgb, 255:red, 128; green, 128; blue, 128 }  ,draw opacity=1 ][line width=0.75]    (291,284) .. controls (293,280) and (300,278) .. (304,282) ;
\draw [color={rgb, 255:red, 128; green, 128; blue, 128 }  ,draw opacity=1 ][line width=0.75]    (383,271) .. controls (390,279) and (397,277) .. (403,271) ;
\draw [color={rgb, 255:red, 128; green, 128; blue, 128 }  ,draw opacity=1 ][line width=0.75]    (286,281) .. controls (291,287) and (303,286) .. (308,278) ;
\draw [color={rgb, 255:red, 128; green, 128; blue, 128 }  ,draw opacity=1 ][line width=0.75]    (387,275) .. controls (390,269) and (397,271) .. (398,274) ;
\draw [line width=0.75]  [dash pattern={on 4.5pt off 4.5pt}]  (448.5,54.25) -- (449,82) ;
\draw [line width=0.75]  [dash pattern={on 4.5pt off 4.5pt}]  (491.5,54.25) -- (492,82) ;

\draw (143,158) node [anchor=north west][inner sep=0.75pt]    {$M$};
\draw (197,180) node [anchor=north west][inner sep=0.75pt]    {$M_{i}$};
\draw (281,158) node [anchor=north west][inner sep=0.75pt]    {$Y^{\prime \alpha }$};
\draw (333,170) node [anchor=north west][inner sep=0.75pt]    {$Y_{i} \times \{0\}$};
\draw (494,158) node [anchor=north west][inner sep=0.75pt]    {$Y^{\beta }$};
\draw (494,120) node [anchor=north west][inner sep=0.75pt]    {$Y^{\beta }\times\R\times\{0\}\times\{0\}$};
\draw (322,76.4) node [anchor=north west][inner sep=0.75pt]    {$Z_{i}$};
\draw (223.5,84.9) node [anchor=north west][inner sep=0.75pt]    {$X_{i} \times \{0\}$};
\draw (422,23.4) node [anchor=north west][inner sep=0.75pt]    {$N_{i} \subset \mathbb{R}^{2( s+t) +3}$};
\draw (122,134.4) node [anchor=north west][inner sep=0.75pt]    {$fx$};
\draw (395,158) node [anchor=north west][inner sep=0.75pt]    {$Y'_{i}$};
\end{tikzpicture}
\caption{Topological construction of $N_i$ with $i\in\alpha$ and $i\notin\beta$.}\label{fig:relative-tognoli-2}
\end{figure}

Let $\phi_i:U_i\to V_i$ be the above $\mscr{C}^\infty$ diffeomorphism between a neighborhood $U_i$ of $S_i\cup(Y\times\{0\})$ in $S$ and a neighborhood $V_i$ of $(X_i\times\{0\})\cup Y'_i$ in $Z_i\cup (\bigsqcup_{\alpha\notin A_i}(Y^\alpha+v_\alpha)\times\R\times\{0\}\times\{0\})$ such that $\phi_i|_{S_i}=\rho_i\times\{0\}$ and $\phi_i|_{Y^{\alpha}}$ is the above $\Q$-biregular isomorphism for every $\alpha\in A_i$, and $\phi_i|_{Y^{\alpha}}$ is the inclusion map for every $\alpha\notin A_i$. Let $V'_i\subset V_i$ be a neighborhood of $(X_i\times\{0\})\cup Y'_i$ in $Z_i\cup (\bigsqcup_{\alpha\notin A_i}(Y^\alpha+v_\alpha)\times\R\times\{0\}\times\{0\})$  such that $\overline{V'_i}\subsetneq V_i$. Set $A_i:=\phi_i^{-1}(\overline{V'_i})\subset U_i$ closed neighborhood of $S_i\cup (Y\times\{0\})$ in $S$ and consider the map $\phi_i|_{A_i}:A_i\to \R^{s+1+t}\times\R^{s+1+t+1}$. Since $2(s+1+t)+1 \geq 2(d+1)+1$, Tietze's theorem ensures the existence of a continuous extension of $\phi_i$ from the whole $S$ to $\R^{s+1+t}\times\R^{s+1+t+1}$, we can apply to $\phi_i|_{A_i}$ the extension theorem \cite[Theorem 5$(\mr{f})$]{Whi36} of Whitney, obtaining a $\mscr{C}^\infty$ embedding $\phi'_i:S\to \R^{s+1+t}\times\R^{s+1+t+1}$ extending $\phi_i|_{A_i}$. Thus, there exists a $\mscr{C}^\infty$ manifold $N_i\subset\R^{s+1+t}\times\R^{s+1+t+1}$ which is $\mscr{C}^\infty$ diffeomorphic to $S$ via $\phi'_i$ and, by construction, the following properties are satisfied:
\begin{itemize}
\item[$(\mr{i})$] $(X_i\times\{0\})\cup Y'_i\subset N_i$;
\item[$(\mr{ii})$] the germ of $N_i$ at $(X_i\times\{0\})\cup Y'_i$ is the germ of a $\Q$-nonsingular $\Q$-algebraic set.
\end{itemize}
Since $X_i\times\{0\}$ is a $\Q$-nonsingular $\Q$-algebraic subset of $N_i$ and $Y'_i$ is a $\Q$-nonsingular $\Q$-algebraic hypersurface of $N_i$ satisfying above property (ii), Lemma \ref{lem:Q-nice-hyp} ensures that $(X_i\times\{0\})\cup Y'_i\subset\R^{s+1+t}\times\R^{s+1+t+1}$ is a $\Q$-stable $\Q$-algebraic set. An application of \cite[Theorem\,3.9]{GSa} with $``L":=(X_i\times\{0\})\cup Y'_i$, $``M":=N_i$ and $``W":=\{0\}$ gives $t_i\in\N$, with $t_i\geq 2(s+1+t)+1 $, a projectively $\Q$-closed $\Q$-nonsingular $\Q$-algebraic set $X^i\subset\R^{t_i}=\R^{s+1+t}\times\R^{s+1+t+1}\times\R^{t_i-2(s+1+t)+1 }$ such that $((X_i\times\{0\})\cup Y'_i)\times\{0\}\subset X^i$ and a $\mscr{C}^\infty$ diffeomorphism $\tau_i:N_i\to X^i$ such that $\tau_i(x,0)=x$ for every $x\in (X_i\times\{0\})\cup Y'_i$. Define the diffeomorphism $\varphi_i:S\to X^i$ as $\varphi_i:=\tau_i\circ \phi'_i$, for every $i\in\{1,\dots,\ell\}$. Let $t:=\max\{t_i\}_{i\in\{1,\dots,\ell\}}$ and consider $X^i\subset\R^t$, for every $i\in\{1,\dots,\ell\}$.

Apply Lemma \ref{lem:Q-Tognoli} with the following substitutions: ``$M$''$:=M$, ``$Y$''$:=Y$, ``$S$''$:=S$, ``$W$''$:=X^1\times\dots\times X^\ell$ and ``$\varphi$''$:=\varphi_1 \times\dots\times \varphi_\ell$ obtaining $k\in\N$, a $\Q$-nonsingular $\Q$-algebraic subset $Z$ of $\R^{s+1+k}=\R^{s+1} \times \R^k$, a projectively $\Q$-closed $\Q$-nonsingular $\Q$-algebraic set $M'\subset\R^{s+1+k}$, a $\mscr{C}^\infty$ diffeomorphism $\phi:M\to M'$ and a $\Q$-regular map $\eta:Z\to W$ satisfying properties Lemma \ref{lem:Q-Tognoli}\ref{lem:Q_tognoli-1}\,\&\,\ref{lem:Q_tognoli-2}. Thus, $\psi(M_i)$, for every $i\in\{1,\dots,\ell\}$, are submanifolds of $M''$ in general position.

Let $G:=\G_{s-d,d+1}$, $E:=\E_{s-d,d+1}$ and $\beta:S \to G$ be the normal bundle map of $S$ in $\R^{s+1}$. Define $\widetilde{\beta}:U \to E$, $\widetilde{\varphi}: U \to X^1\times\dots\times X^\ell$ and $\widehat{\eta}:U'\to E\times X^1\times\dots\times X^\ell$ be defined as in the proof of Lemma \ref{lem:Q-Tognoli}. Let $\pi_i:E\times X^1\times\dots\times X^\ell\rightarrow X^i$ be the projection on the $i$-th component of $X^1\times\dots\times X^\ell$, thus $\pi_i\circ(\widetilde{\beta}\times\widetilde{\varphi})=\varphi_i\circ\widetilde{\rho}$. Let $X'_i:=X_i\times\{0\}\times\{0\}\subset X^i$, for every $i\in\{1,\dots,\ell\}$. By $(\mr{vii})$ in the proof of Lemma \ref{lem:Q-Tognoli}, we know that $\pi_i\circ\widehat{\eta}$ is arbitrarily $\mscr{C}^\infty_\w$ close to $\varphi_i\circ\widetilde{\rho}$, thus $\pi_i\circ\widehat{\eta}$ is transverse to $X'_i$ in $X^i$ for every $i\in\{1,\dots,\ell\}$. By $(\mr{v})$,$(\mr{vi})$\,\&\,$(\mr{vii})$ in the proof of Lemma \ref{lem:Q-Tognoli} and \cite[Theorem 14.1.1]{BCR98}, we have that $S'_i:=\widehat{\eta}^{\,-1}((G\times\{0\}) \times X^1\times\dots\times X'_i\times\dots\times X^\ell)=S'\cap(\pi_i\circ\widehat{\eta})^{-1}(X'_i)$ is a compact $\mscr{C}^\infty$ submanifold of $S\subset U''$ containing $Y_i\times\{0\}\times\{0\}$ and there exists a $\mscr{C}^\infty$ diffeomorphism $\psi^i_1:U''\to U''$ arbitrarily $\mscr{C}^\infty_\w$ close to $\mr{id}_{U''}$ such that $\psi^i_1(S_i)=S'_i$ and $\psi^i_1=\mr{id}_{U''}$ on $Y_i\times\{0\}\times\{0\}$. Moreover, by $(\mr{v})$ in the proof of Lemma \ref{lem:Q-Tognoli}, \cite[Lemma\,2.19]{GSa} ensures that $S''_i:=\eta^{-1}((G\times\{0\}) \times X^1\times\dots\times X'_i\times\dots\times X^\ell)=S''\cap(\pi_i\circ\eta)^{-1}(X'_i)\subset\R^{s+1+k}$ is a  $\Q$-algebraic set such that  $S''_1:=S''\cap Z_1=(\pi|_{Z_1})^{-1}(S')\subset\Reg^{*}(S'')$
In addition, the $\mscr{C}^\infty$ embedding $\psi^i_2:S_i\to\R^{s+1+k}$ defined by $\psi_2^i(x,x_{s+1}):=(\pi|_{Z_1})^{-1}(\psi^i_1(x,x_{s+1}))$, is arbitrarily $\mscr{C}^\infty_\w$ close to the inclusion map $j_{S_i}:S_i\hookrightarrow\R^{s+1+k}$, $\psi^i_2=j_{S_i}$ on $Y_i\times\{0\}$ and $\psi^i_2(S_i)=S''_{i1}$. Note that the set $S''_{i1}$ is both compact and open in $S''_i$; thus, $S''_{i1}$ is the union of some connected components of $S''_i$ and $S''_{i2}:=S''_i\setminus S''_{i1}$ is a closed subset of $\R^{s+1+k}$. Since $\psi^i_2$ is arbitrarily $\mscr{C}^\infty_\w$ close to $j_{S_i}$, the coordinate hyperplane $\{x_{s+1}=0\}$ of $\R^{s+1+k}$ is transverse to $S''_{i1}$ in $\R^{s+1+k}$, $S''_{i1}\cap\{x_{s+1}=0\}=M'_i \sqcup (Y_i\times\{0\}\times\{0\})$ for some compact $\mscr{C}^\infty$ submanifold $M'_i$ of $\R^{s+1+k}$ and there exists a $\mscr{C}^\infty$ embedding $\psi^i_3:M_i\to\R^{s+1+k}$ arbitrarily $\mscr{C}^\infty_\w$ close to the inclusion map $j_{M_i}:M_i\hookrightarrow\R^{s+1+k}$ such that $M'_i=\psi^i_3(M_i)$. Observe that, by construction $M'_i\subset M'$, for every $i\in\{1,\dots,\ell\}$, are in general position. Define $M''_i:=M''\cap S''_{i}$, for every $i\in\{1,\dots,\ell\}$. By $(\mr{ix})$,$(\mr{x})$\,\&\,$(\mr{xi})$, we deduce that the $M''_i$'s are $\Q$-nonsingular $\Q$-algebraic subsets of $M''$ in general position and there exists a $\mscr{C}^\infty$ embedding $\psi^i_4:M_i\to\R^{s+1+k}$ arbitrarily $\mscr{C}^\infty_\w$ close to the inclusion map $j_{M'_i}:M'_i\hookrightarrow\R^{s+1+k}$ such that $M''_i=\psi^i_4(M'_i)$, for every $i\in\{1,\dots,\ell\}$. Consider the embedding $\psi_i:M_i\to \R^{s+1+k}$ defined as $\psi_i:=\psi^i_3\circ\psi^i_4$. Then, $\psi_i$ is $\mscr{C}^\infty_\w$ close to $j_{M_i}$ and $\psi(M_i)=M''_i$, for every $i\in\{1,\dots,\ell\}$. As a consequence, $\psi_i\circ(\psi|)^{-1}|:\psi(M_i)\to M''_i\subset M''$ is a $\mscr{C}^\infty$ diffeomorphism $\mscr{C}^\infty_\w$ close to $j_{\psi(M_i)}:\psi(M_i)\hookrightarrow \R^{s+1+k}$, for every $i\in\{1,\dots,\ell\}$. Thus, \cite[Lemma 2.9]{AK81a} ensures the existence of a $\mscr{C}^\infty$ diffeomorphism $\psi_5:M''\to M''$ such that $\psi_5(\psi(M_i))=M''_i$ and $\psi_5$ is $\mscr{C}^\infty_\w$ close to $j_{M'}:M'\hookrightarrow\R^{s+1+k}$. 

Let $m:=\max\{n,2d+1\}$ and $t:=s+1+k-m\leq 0$. Denote by $(x,y)$ the coordinates of $\R^{m+t}=\R^m\times\R^{t}$ and consider the set $\mc{M}_{m,t}(\Q)$ of rational $m\times t$-matrices endowed with the Euclidean topology induced by the one of $\mc{M}_{m,t}(\R)=\R^{mt}$. By denseness of $\Q$ in $\R$, the $\R|\Q$-generic projection theorem \cite[Theorem\,2.25]{GSa} ensures the existence of $A\in\mc{M}_{n,t}(\Q)$ arbitrarily close to the zero matrix $O$ such that the corresponding projection $\pi_A:\R^{m+t}\to\R^m$, $(x,y)\mapsto x-Ay$ (here $x$ and $y$ are interpreted as column vectors) satisfies $M''':=\pi_A(M'')\subset\R^m$ is a projectively $\Q$-closed $\Q$-nonsingular $\Q$-algebraic set and the restriction $\pi'_A|:M''\to M'''$ is a $\Q$-biregular isomorphism. 
Therefore, by setting $``M' ":=M'''$ and $``M'_i ":=\pi_A(M''_i)$, for every $i\in\{1,\dots,\ell\}$, and the $\mscr{C}^\infty$ diffeomorphism $h:M\to M'$ as $``h ":=\pi_A\circ\psi_5\circ\psi_4\circ\psi_3$ we get the wondered projectively $\Q$-closed $\Q$-nonsingular $\Q$-algebraic model $M'\subset\R^{m}$ of $M$ with $\Q$-nonsingular $\Q$-algebraic subsets $\{M'_i\}_{i=1}^\ell$ in general position such that the $\mscr{C}^\infty$ diffeomorphism $h:M\to M'$ satisfies $\jmath\circ h\in\mathcal{U}$, $h(M_i)=M'_i$ and $\jmath\circ h|_{M_i}\in\mathcal{U}_i$ for every $i\in\{1,\dots,\ell\}$, where $\jmath:M'\hookrightarrow\R^{m}$ denotes the inclusion map.

Assume in addition that $M$ and each $M_i$ are Nash manifolds, for every $i\in\{1,\dots,\ell\}$. By \cite[Theorem\,4.2\,\&\,Corollary\,4.3]{GSa} obtained as an adaptation of the Nash approximation techniques developed in  \cite{BFR14}, we may assume that $h:M\to M'$ is a Nash diffeomorphism such that $h(M_i)=M'_i$, for every $i\in\{1,\dots,\ell\}$. Moreover, an application of \cite{Jel08} provides a semialgebraic homeomorphism $\R^m\to\R^m$ extending $h$, as desired.
\end{proof}

\begin{remark}\label{rem:Q-tognoli}
In \cite{BT92} Ballico and Tognoli proved a similar statement of Theorem \ref{thm:Q_tico_tognoli} in the simplified setting with $\ell=0$. In their work the authors refer to the following notion for a real algebraic set to be \emph{defined over $\Q$}, which was introduced by Tognoli in \cite[Definition\,3,\,p.30]{Tog78}, that is: a real algebraic set $V\subset\R^n$ is \emph{defined over $\Q$} if $\Ii(V)=\Ii_\Q(V)\R[x]$. Clearly, if a real algebraic set $V\subset\R^n$ is defined over $\Q$, then it is $\Q$-determined and if in addition $V$ is nonsingular, then it is $\Q$-nonsingular. Hence, \cite[Theorem\,0.1]{BT92} is even stronger than our Theorem \ref{thm:Q_tico_tognoli} in the simplified setting with $\ell=0$. However, the classical strategy of Nash-Tognoli theorem, which is proposed in \cite{BT92}, requires to approximate smooth functions by polynomial (and regular) functions with the additional property of having coefficients over $\Q$. Unfortunately, it is easy to construct polynomials $p\in\Q[x]$ for which $0$ is a regular value, thus they can appear in Weierstrass approximation arguments, but $\mathcal{Z}_{\R^n}(p)$ is not defined over $\Q$, that is, $\Ii(\mathcal{Z}_{\R^n}(p))\neq\Ii_\Q(\mathcal{Z}_{\R^n}(p))\R[x]$. Consider for example the polynomial $p(x)=x^3-2\in\Q[x]$ and observe that $\mathcal{Z}_{\R}(p)=\{\sqrt[3]{2}\}$, $\nabla p(\sqrt[3]{2})\neq 0$ but 
\[
\Ii_{\R}(\mathcal{Z}_\R(p))=(x-\sqrt[3]{2})\R[x]\supsetneq (x^3-2)\R[x]=\Ii_{\Q}(\mathcal{Z}_\R(p))\R[x].
\]
This simple remark proves that the argument in \cite{BT92} is not sufficient to deduce that the resulting algebraic models of compact manifolds are defined over $\Q$ in the sense of \cite{Tog78}, so the proof can not be considered valid. Moreover, when replacing the property for a nonsingular algebraic model of a compact manifold to be defined over $\Q$ with being $\Q$-nonsingular, other arguments in the classical proof of Nash-Tognoli theorem are not easily guaranteed. The clearest example is the classical lemma to separate the irreducible components of a nonsingular algebraic set whose generalization over $\Q$ is carefully proved using the notion of $\R|\Q$-regular points, see Proposition \ref{cor:Q_setminus} and its complete proof in \cite[Proposition\,2.14]{GSa}.
\end{remark}

\vspace{0.5em}

\begin{remark}\label{rmk:nonsing-rat-pts}
In the statement of Theorem \ref{thm:Q_tico_tognoli} we can add the following requirement: `` $M'\subset\R^{m}$ contains a hypersurface of rational points, that is, $\dim(\Zcl_{\R^m}(M'(\Q)))\geq d-1$".

Indeed, up to perform a small translation and rotation we may suppose that there is $a\in (M\setminus \bigcup_{i=1}^\ell M_i)\cap\Q^n$ and the tangent space $T_a M$ of $M$ at $a$ has equation over the rationals. Then, consider a sphere $\sph^{n-1}(a,r)$ centered at $a$ of radius $r\in\Q$ such that $\sph^{n-1}(a,r)\cap\bigcup_{i=1}^\ell M_i=\varnothing$. Observe that $\sph^{n-1}(a,r)\cap T_a M$ is a $\Q$-nonsingular $\Q$-algebraic set of dimension $d-1$ having Zariski dense (actually Euclidean dense) rational points. Choose neighborhoods $U'_a$ and $U_a$ of $a$ in $M$ such that $\sph^{n-1}(a,r)\cap T_a M\subset U'_a$ and $\overline{U'_a}\subset U_a$ and neighborhoods $V$ and $V'$ of $\bigcup_{i=1}^\ell M_i$ in $M$ such that $\overline{V'}\subset V$. By a partition of unity argument, we obtain a $\mscr{C}^\infty$ manifold $\widetilde{M}\subset \R^n$ such that:
\begin{enumerate}[label={(\roman*)}, ref=(\roman*)]
\item $M_{\ell+1}:=\sph^{n-1}(a,r)\cap T_a M\subset \widetilde{M}$ and $\{M_i\}_{i=1}^{\ell+1}$ are $\mscr{C}^\infty$ submanifolds of $\widetilde{M}$ in general position.
\item Since $(\sph^{n-1}(a,r)\cap T_a M)(\Q)\subset\widetilde{M}(\Q)$, then $\dim(\Zcl_{\R^m}(\widetilde{M}(\Q)))\geq d-1$.
\item We may choose $\widetilde{M}$ to be diffeomorphic to $M$, in addition, by \cite[Lemma 2.9]{AK81a}, there exists a diffeomorphism $\widetilde{\phi}:M\to \widetilde{M}$ such that $\widetilde{\phi}|_{M_i}=\id_{M_i}$, for every $i\in\{1,\dots,\ell\}$, and $\jmath_{\widetilde{M}}\circ\widetilde{\phi}$ is arbitrarily $\mscr{C}^\infty_\w$ close to $\jmath_M$, where $\jmath_{M}:M\hookrightarrow\R^n$ and $\jmath_{\widetilde{M}}:\widetilde{M}\hookrightarrow\R^n$ denote the inclusion maps.
\item Suppose that in addition $M,M_1,\dots,M_\ell\subset \R^n$ are Nash manifolds. By \cite[Corollary\,4.3]{GSa}, we may suppose that above diffeomorphism $\widetilde{\phi}:M\to \widetilde{M}$ is actually a Nash diffeomorphism such that $\widetilde{\phi}|_{M_i}=\id_{M_i}$ and $\jmath_{\widetilde{M}}\circ\widetilde{\phi}$ is arbitrarily $\mathcal{N}_\w$ close to $\jmath_M$.
\end{enumerate}
Then, it suffices to substitute $``M":=\widetilde{M}$, $``\ell":=\ell+1$ and fix $``M_{\ell+1}":=\sph^{n-1}(a,r)\cap T_a M$ in the proof of Theorem \ref{thm:Q_tico_tognoli} and observe that, being $M_{\ell+1}$ a $\Q$-nonsingular $\Q$-algebraic subset of $\R^n$ contained in $M$ such that $M_{\ell+1}\cap\bigcup_{i=1}^\ell M_i=\varnothing$ and the normal bundle map of $M$ restricted to $M_{\ell+1}$ is $\Q$-regular, we can keep $M_{\ell+1}$ fixed during the approximation steps. This ensures that $M_{\ell+1}=\sph^{n-1}(a,r)\cap T_a M\subset M'(\Q)$, thus $\dim(\Zcl_{\R^m}(M'(\Q)))\geq d-1$, as desired. 
\end{remark}

\vspace{0.5em}

Observe that, if $M$ and the submanifolds $M_i$, for every $i\in\{1,\dots,\ell\}$, are compact nonsingular algebraic sets, Theorem \ref{thm:Q_tico_tognoli} provides a positive answer to the \textsc{Relative $\Q$-algebraicity problem} in the compact case.

Above result can be extended to the case $M$ and the submanifolds $M_i$, for every $i\in\{1,\dots,\ell\}$, are nonsingular algebraic sets not assumed to be compact. The strategy is to apply algebraic compactification getting a compact algebraic set with only an isolated singularity and then apply a relative variant of the strategy proposed in the proof of \cite[Theorem\,1.10]{GSa}. Next theorem provides a complete proof of our Main Theorem, hence it gives a complete positive answer to the \textsc{Relative $\Q$-algebraicity problem}.

\vspace{0.5em}

\begin{theorem}\label{thm:non-compact}
Let $V$ be a nonsingular algebraic subset of $\R^{n}$ of dimension $d$ and let $\{V_i\}_{i=1}^\ell$ be a finite family of nonsingular algebraic subsets of $V$ in general position. Set $m:=n+2d+3$. Then, for every neighborhood $\mathcal{U}$ of the inclusion map $\iota:V\hookrightarrow\R^{m}$ in $\Nn_{\w}(V,\R^{m})$ and for every neighborhood $\mathcal{U}_i$ of the inclusion map $\iota|_{V_i}:V_i\hookrightarrow\R^m$ in $\Nn_{\w}(V_i,\R^m)$ for every $i\in\{1,\dots,\ell\}$, there exist a $\Q$-nonsingular $\Q$-algebraic set $V'\subset\R^m$, a family $\{V'_i\}_{i=1}^\ell$ of $\Q$-nonsingular $\Q$-algebraic subsets of $V'$ in general position and a Nash diffeomorphism $h:V\to V'$ which simultaneously takes each $V_i$ to $V'_i$ such that, if $\jmath:V'\hookrightarrow\R^m$ denotes the inclusion map, then $\jmath\circ h\in\mathcal{U}$ and $\jmath\circ h|_{V_i}\in\mathcal{U}_i$ for every $i\in\{1,\dots,\ell\}$. Moreover, $h$ extends to a semialgebraic homeomorphism from $\R^m$ to $\R^m$.
\end{theorem}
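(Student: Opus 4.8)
The plan is the strategy announced above: compactify so as to reduce to a \emph{compact} real algebraic set with at most one isolated singular point, build a $\Q$-algebraic model of this compactification in which the bad point is a \emph{rational} point, and then delete that point. Let $\theta\colon\R^n\to\sph^n\setminus\{N\}\subset\R^{n+1}$ be inverse stereographic projection from the rational north pole $N$; it is a biregular isomorphism onto its image. Setting $\widehat V:=\overline{\theta(V)}$ and $\widehat V_i:=\overline{\theta(V_i)}$ (closures in $\R^{n+1}$) we obtain a compact real algebraic set $\widehat V\subset\R^{n+1}$ with $\widehat V\setminus\{N\}=\theta(V)$ biregularly isomorphic to $V$, with $\Sing(\widehat V)\subseteq\{N\}$, and with algebraic subsets $\widehat V_i$ such that $\widehat V_i\setminus\{N\}\cong V_i$ and $\{\widehat V_i\setminus\{N\}\}_{i=1}^{\ell}$ are in general position off $N$. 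Since $V\cong\widehat V\setminus\{N\}$ it suffices to produce a \emph{compact} $\Q$-algebraic set $\widehat W$ in some $\R^{K}$, $\Q$-algebraic subsets $\widehat W_i$, and a rational point $q\in\widehat W(\Q)$, with $\Sing(\widehat W)\subseteq\{q\}$, $\widehat W\setminus\{q\}$ and each $\widehat W_i\setminus\{q\}$ $\Q$-nonsingular, $\{\widehat W_i\setminus\{q\}\}$ in general position, and $\widehat W\setminus\{q\}$ Nash diffeomorphic to $V$ by a diffeomorphism carrying $\widehat W_i\setminus\{q\}$ to $V_i$. Indeed, with $f:=|x-q|^2\in\Q[x]$ we have $\mathcal Z_\R(f)\cap\widehat W=\{q\}$, so $x\mapsto(x,1/f(x))$ is a $\Q$-biregular isomorphism of $\widehat W\setminus\{q\}$ onto the affine $\Q$-algebraic set $\{(x,t)\in\widehat W\times\R:t\,f(x)=1\}$; this set and the subsets coming from the $\widehat W_i$ are $\Q$-nonsingular (the facts on $\Q$-regular maps, together with Corollary \ref{cor:Q_setminus}, are in \cite{GSa}), so setting $V':=\widehat W\setminus\{q\}$, $V'_i:=\widehat W_i\setminus\{q\}$ and composing diffeomorphisms gives $h$; an $\R|\Q$-generic projection (\cite[Theorem\,5.2.3]{FG}) drops the ambient dimension to $m=n+2d+3$ after the bookkeeping of the next steps, and \cite{Jel08} extends $h$ to a semialgebraic self-homeomorphism of $\R^m$, exactly as at the end of the proof of Theorem \ref{thm:Q_tico_tognoli}.

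\emph{Model of the germ at infinity.} Choose a generic sphere $\Sigma$ about $N$ in $\R^{n+1}$, so that $L:=\widehat V\cap\Sigma$ and $L_i:=\widehat V_i\cap\Sigma$ are compact $\mscr{C}^\infty$ submanifolds in general position and $\widehat V\cap\overline B(N,r)$ is diffeomorphic, as a stratified pair, to the cone $cL$ over $L$ carrying $\widehat V_i\cap\overline B(N,r)$ to $cL_i$ (the cone over a sphere, i.e. a disk, when $N$ is a smooth point of $\widehat V$). Apply Theorem \ref{thm:Q_tico_tognoli} to $(L,\{L_i\})$: it gives a projectively $\Q$-closed $\Q$-nonsingular $\Q$-algebraic set $L'=\mathcal Z_\R(p)\subset\R^s$ with $p\in\Q[x]$ overt, $\Q$-nonsingular $\Q$-algebraic subsets $L'_i$ in general position, and a Nash diffeomorphism $g\colon L\to L'$ with $g(L_i)=L'_i$. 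Let $C(L')\subset\R^{s+1}$ be the affine cone over the image of $L'$ in $\PP^{s}(\R)$, cut out by the homogenizations of a system of generators of $\Ii_\Q(L')$, and $C(L'_i)\subseteq C(L')$ likewise. Overtness of $p$ forces the vertex of $C(L')$ to be the single rational point $q:=0$; $C(L')$ is a $\Q$-algebraic set with $\Sing(C(L'))\subseteq\{q\}$, and $C(L')\setminus\{q\}$ — the $\R^{\times}$-orbit of the projectively $\Q$-closed $\Q$-nonsingular set $L'$ — is a $\Q$-nonsingular $\Q$-algebraic set, its $\R|\Q$-regularity off the vertex following from that of $L'$ by homogenizing the local equations in the $\R|\Q$-Jacobian criterion (\cite[Theorem\,4.1.7]{FG}); similarly the $C(L'_i)\setminus\{q\}$ are $\Q$-nonsingular and in general position, and $C(L')\cap\overline B(q,1)$ is diffeomorphic to $cL'\cong cL$ respecting the subsets.

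\emph{Gluing and relative approximation.} Replace the cone-germ $\widehat V\cap\overline B(N,r)$ of $(\widehat V,\{\widehat V_i\})$ at $N$ by the $\Q$-algebraic cone-germ $C(L')\cap\overline B(q,1)$ of $(C(L'),\{C(L'_i)\})$ at $q$, matching collars of $L$ and $L'$ by a smoothing of $g$; Whitney's embedding and extension theorems \cite{Whi36} — this is where the codimension, hence the value $m=n+2d+3$, is forced — realize the result as a \emph{compact} subset $\widehat W$ of some $\R^{K}$ which coincides with $C(L')$ near $q$, is a $\mscr{C}^\infty$ submanifold pair with the $\widehat W_i$ off $q$, and is Nash diffeomorphic to $V$ off $q$. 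Away from a small ball about $q$, $\widehat W$ is a compact $\mscr{C}^\infty$ manifold with boundary contained in the fixed $\Q$-algebraic cone $C(L')$; one then runs the proof of Theorem \ref{thm:Q_tico_tognoli} \emph{relative to this fixed $\Q$-algebraic piece}, i.e. performs the relative $\Q$-bordism of Theorem \ref{thm:Q-spine-cobordism} and Lemma \ref{lem:Q_tico_bordism} and the algebraic approximation of Lemma \ref{lem:Q-Tognoli} with $L'$ (resp.\ $C(L')$) in the role of the kept-fixed set $Y$, exactly as a distinguished $\Q$-nonsingular $\Q$-algebraic subset is held fixed in Remark \ref{rmk:nonsing-rat-pts}. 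This produces the compact $\Q$-algebraic set $\widehat W$ with the properties required in the first paragraph ($\{\widehat W_i\setminus\{q\}\}$ is in general position near $q$ because $\{C(L'_i)\setminus\{q\}\}$ is, and off $q$ by Lemma \ref{lem:Q_tico_bordism}), and deleting $q$ via the graph of $1/|x-q|^2$ as above finishes the proof.

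\emph{Main obstacle.} The crux is the relative approximation of the last step: one must verify that the relative $\Q$-bordism of Theorem \ref{thm:Q-spine-cobordism} and the successive algebraic approximations of Lemma \ref{lem:Q-Tognoli} can be carried out while holding a whole prescribed $\Q$-nonsingular (and, at the vertex, singular) $\Q$-algebraic germ pointwise fixed — in the singular case this also demands control of $\R|\Q$-regularity across the vertex of $C(L')$. A close second difficulty is arranging the two cone-germs to match up $\mscr{C}^\infty$-smoothly after smoothing corners while preserving the general position of all the $\widehat W_i$, and tracking the ambient dimension through the compactification, the cone construction, and the final graph and generic-projection steps so that it comes out to be $n+2d+3$.
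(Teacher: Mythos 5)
Your outer steps (one-point algebraic compactification, and the final deletion of the rational bad point by passing to the graph of an inverse of a $\Q$-polynomial vanishing only there, followed by a generic projection and \cite{Jel08}) match the paper's, up to cosmetic choices (the paper uses the inversion $x\mapsto x/|x|^2$ and deletes the origin via $\{y\sum_k x_k^2=C\}$). The core of your argument, however --- modelling the \emph{link} of the point at infinity over $\Q$ via Theorem \ref{thm:Q_tico_tognoli}, coning it off, and then running the compact theorem ``relative to the fixed $\Q$-algebraic cone germ'' --- has a genuine gap, and it is exactly the one you flag as your ``main obstacle''. The only relative approximation tool available (Lemma \ref{lem:Q-Tognoli}, resting on \cite[Lemma\,3.8\,\&\,Theorem\,3.9]{GSa}) keeps fixed a $\Q$-\emph{nice} $\Q$-algebraic subset of a compact \emph{smooth} manifold. $\Q$-niceness (Definition \ref{def:Q-pair}) requires the smooth vanishing ideal to be generated locally by $\Ii_\Q$ at \emph{every} point, and the paper only establishes it for unions of $\Q$-nonsingular sets in good position (Lemma \ref{lem:Q-nice-hyp}); at the vertex of $C(L')$ this is neither available nor expected to hold. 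Worse, the object $\widehat W$ you need to realize algebraically is not a manifold, and real algebraic sets do not glue along collars: a set that coincides with the algebraic cone $C(L')$ near $q$ and with a smoothly approximable manifold-with-boundary elsewhere has no reason to be globally algebraic, and no lemma in the paper or in \cite{GSa} produces such a set from that local data. So your last step is not a verification to be ``carried out''; it is the missing content of the theorem.

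The paper sidesteps this by reversing the order of operations. It first applies a relative Hironaka resolution to the compactification $\widetilde V$, obtaining a \emph{nonsingular} compact $X$ with the strict transforms $X_i$ and the exceptional hypersurfaces $E_j$ all nonsingular and in general position --- a configuration to which the compact Theorem \ref{thm:Q_tico_tognoli} applies verbatim. The isolated singular point is then re-created over $\Q$ by collapsing $\bigcup_j E'_j$ via the explicit algebraic blow-down of \cite[Lemma\,3.13]{GSa}, after approximating the resolution map by a polynomial map vanishing on $\bigcup_j E'_j$ (here the $\Q$-niceness of a union of $\Q$-nonsingular sets in general position is exactly what is needed, and is what the paper proves). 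If you wish to rescue your link-and-cone route, you would need an analogue of \cite[Lemma\,3.13]{GSa} that attaches a prescribed singular cone germ inside a global $\Q$-algebraic set; that is essentially what the blow-down lemma provides, but only through the resolution, not through the link.
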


\begin{proof}
Let $c_i$ be the codimension of $M_i$ in $M$ for every $i\in\{1,\dots,\ell\}$. We can assume $V$ is noncompact. If $V=\R^n$, then it suffices to identify $V$ with the algebraic set $V\times\{0\}\subset\R^{n+1}=\R^n\times\R$ and next proof continues to work with the same estimate $m=n+2d+3$. Up to translate $V$ and each $V_i$ with $i\in\{1,\dots,\ell\}$, of a very small vector we may suppose that the origin $0$ of $\R^{n}$ is not contained in $V$. Let $s,s_1,\dots,s_\ell\in\R[x]$ such that $\mathcal{Z}_{\R}(s)=V$ and $\mathcal{Z}_{\R}(s_i)=V_i$, for every $i\in\{1,\dots,\ell\}$. Let $\sph^{n-1}$ be the standard unit sphere of $\R^{n}$ and let $\theta:\R^{n}\setminus\{0\}\to\R^{n}\setminus\{0\}$ as $\theta(x)=\frac{x}{| x |_{n}^2}$ be the inversion with respect to $\sph^{n-1}$. Recall that $\theta\circ\theta=\id_{\R^n\setminus\{0\}}$. Let $e\geq \max\{\deg(s),\deg(s_1),\dots,\deg(s_\ell)\}$. Define the polynomials $t:=| x  |^{2e}_{n}\cdot (s\circ\theta)(x)\in\R[x]$, $t_i:=| x  |^{2e}_{n}(s_i\circ\theta(x))\in\R[x]$, the compact algebraic sets $\widetilde{V}:=\mathcal{Z}_\R(t)$ and $\widetilde{V}_i:=\mathcal{Z}_\R(t_i)$, for every $i\in\{1,\dots,\ell\}$. By construction, $\widetilde{V}=\theta(V)\sqcup\{0\}$, $\widetilde{V_i}=\theta(V)_i\sqcup\{0\}$, for every $i\in\{1,\dots,\ell\}$, and $\theta:V\to \widetilde{V}\setminus\{0\}$ is a $\Q$-biregular map between the algebraic set $V$ and the Zariski open subset $\widetilde{V}\setminus\{0\}$ of $\widetilde{V}$. In general, $0$ may be a singular point of $\widetilde{V}$ and $\widetilde{V}_i$ for $i\in\{1,\dots,\ell\}$.

By a relative version of Hironaka's desingularization theorem (see \cite[Lemma 6.2.3]{AK92a}) there are a finite set $J\subset\N\setminus\{1,\dots,\ell\}$, nonsingular algebraic sets $X$, $X_i$ and $E_j$, for every $i\in\{1,\dots,\ell\}$ and $j\in J$, and a regular map $p:X\to \widetilde{V}$ satisfying the following properties:
\begin{enumerate}[label=(\roman*), ref=(\roman*)]
\item $E_j$ is an algebraic hypersurface of $X$ for every $j\in J$ and $\bigcup_{j\in J} E_j=p^{-1}(0)$;
\item the nonsingular algebraic sets $\{X_i\}_{i=1}^\ell\sqcup \{E_j\}_{j\in J}$ are in general position;
\item $p|_{X\setminus \bigcup_{j\in J} E_j}:X\setminus \bigcup_{j\in J} E_j\to \widetilde{V}$ is biregular.
\item $p(X_i)=V_i$ for every $i\in\{1,\dots,\ell\}$.
\end{enumerate} 
An application of Theorem \ref{thm:Q_tico_tognoli} with the following substitutions: 
``$M$''$:=X$, ``$\ell$''$:=\ell+|J|$,``$M_i$''$:=X_i$ for every $i\in\{1,\dots,\ell\}$, ``$M_j$''$:=E_j$ for every $j\in J$, gives a projectively $\Q$-closed $\Q$-nonsingular $\Q$-algebraic set $X'\subset\R^{2d+1}$ of dimension $d$, $\Q$-nonsingular $\Q$-algebraic subsets $X'_i$ for $i\in\{1,\dots,\ell\}$, and $\Q$-nonsingular $\Q$-algebraic hypersurfaces $E'_j$, for $j\in J$, of $X'$ in general position and a Nash diffeomorphism $\phi:X\to X'$ such that $\phi(X_i)=X'_i$ for every $i\in\{1,\dots,\ell\}$, and $\phi(E_j)=E'_j$, for every $j\in J$.

Consider the Nash map $p':=p\circ\phi^{-1} : X'\to \widetilde{V}$ such that $(p')^{-1}(0)=\bigcup_{j\in J}^\ell E'_j$. Recall that $\bigcup_{j\in J}^\ell E'_j\subset\R^{2d+1}$ is $\Q$-stable since each $E'_j$ is a $\Q$-nonsingular $\Q$-algebraic set for every $j\in J$ and the $E'_j$'s are in general position, thus we can apply \cite[Lemma\,3.2]{GSa} with $``L"=``P":=\bigcup_{j\in J} E'_j$ to each entry of any smooth extension $\R^{2d+1}\to\R^n$ of $p':X'\to \R^n$ getting a polynomial map $q:=(q_1,\dots,q_n):\R^{2d+1}\to\R^n$ such that $q|_{X'}$ is arbitrarily $\Nn_\w$ close to $p'$ and $q_i\in\Ii_\Q(\bigcup_{j\in J} E'_j)$.

Finally, an application of \cite[Lemma\,3.14]{GSa} with the following substitutions: ``$X$''$:=X'$, ``$Y$''$:=\{0\}$, ``$A$''$:=\bigcup_{j\in J} E'_j$, ``$p$''$:=q|_{\bigcup_{j\in J} E'_j}$ and ``$P$''$:=q$ gives a $\Q$-determined $\Q$-algebraic set $\widetilde{V}'\subset\R^{2d+1}\times\R^{n}\times\R$ of dimension $d$ with (eventually) only an isolated singularity at the origin $0$ of $\R^{2d+1}\times\R^{n}\times\R$, a homeomorphism $h:\widetilde{V}'\to X'\cup_p \{0\}$, where $X'\cup_p \{0\}$ denotes the adjunction topological space of $X'$ and $\{0\}$ along $p$, $\Q$-regular maps $f:X'\to \widetilde{V'}$ and $g:\{0\}\to \widetilde{V'}$ such that the following diagram commutes
\[
\centering
\begin{tikzcd}
X' \arrow[dr, hook] \arrow[drrr, bend left=20, "f"] \\
& X'\sqcup \{0\} \arrow[r, "\pi"] & X'\cup_p \{0\} &  \arrow[l, "h"'] \widetilde{V'}\\
\{0\} \arrow[ur, hook] \arrow[urrr, bend right=20, "g"']
\end{tikzcd}
\]
and $\widetilde{V}'_i:=f(X'_i)\cup\{0\}$, for every $i\in\{1,\dots,\ell\}$, is a $\Q$-determined $\Q$-algebraic subset of $\widetilde{V}'$ of codimension $c_i$ with (eventually) only an isolated singularity at the origin $0$ of $\R^{2d+1}\times\R^{n}\times\R$.

Define the semialgebraic homeomorphism $\widetilde{h}:\widetilde{V}\to \widetilde{V}'$ as:

\[
\widetilde{h}(x)=
\begin{cases}
0\quad\quad\quad&\text{if $x=0\in\R^n$,}\\
f\circ\phi\circ p^{-1}(x)&\textnormal{otherwise.}
\end{cases}
\]

Let $m':=2(d+1)+n$. Observe that  $\widetilde{h}|_{\widetilde{V}\setminus\{\overline{0}\}}:\widetilde{V}\setminus\{\overline{0}\}\to \widetilde{V}'\setminus\{\overline{0}\}$ is a Nash diffeomorphism and $\widetilde{h}|_{\widetilde{V}_i}:\widetilde{V}_i \to \widetilde{V}'_i$ is a semialgebraic homeomorphism satisfying the following approximation properties:
\begin{itemize}
\item[$(\mr{iv})$] $\iota_{\widetilde{V}'}\circ\widetilde{h}$ is arbitrarily $\mscr{C}^0_\w$ close to $\iota_{\widetilde{V}}$ and $\iota_{\widetilde{V}'}|_{\widetilde{V}'\setminus\{0\}}\circ\widetilde{h}|_{V\setminus\{0\}}$ is arbitrarily $\mscr{C}^\infty_\w$ close to $\iota_{\widetilde{V}}|_{\widetilde{V}\setminus\{\overline{0}\}}$, 
\item[$(\mr{v})$] $\iota_{\widetilde{V}'}|_{\widetilde{V}'_i}\circ\widetilde{h}|_{\widetilde{V}_i}$ is arbitrarily $\mscr{C}^0_\w$ close to $\iota_{\widetilde{V}}|_{\widetilde{V}_i}$ and $\iota_{\widetilde{V}'}|_{\widetilde{V}'_i\setminus\{0\}}\circ\widetilde{h}|_{\widetilde{V}_i\setminus\{0\}}$ is arbitrarily $\mscr{C}^\infty_\w$ close to $\iota_{\widetilde{V}}|_{\widetilde{V}_i\setminus\{\overline{0}\}}$,
\end{itemize}
where $\iota_{\widetilde{V}}:\widetilde{V}\hookrightarrow\R^{m'}$ and $\iota_{\widetilde{V}'}:\widetilde{V}'\hookrightarrow \R^{m'}$ denote the inclusion maps.

Let $t',t'_1,\dots,t'_\ell\in\Q[x_1,\dots,x_{m'}]$ such that $\mathcal{Z}_{\R}(t')=\widetilde{V'}$ and $\mathcal{Z}_{\R}(t'_i)=\widetilde{V'_i}$ for every $i\in\{1,\dots,\ell\}$. Let $\sph^{m-1}$ be the standard unit sphere of $\R^{m'}$ and let $\theta':\R^{m'}\setminus\{0\}\to\R^{m'}\setminus\{0\}$ as $\theta'(x)=\frac{x}{| x |_{m'}^2}$ be the inversion with respect to $\sph^{m'-1}$. Recall that $\theta'\circ\theta'=\id_{\R^{m'}\setminus\{0\}}$. Let $e'> \max\{\deg(t'),\deg(t'_1),\dots,\deg(t'_\ell)\}$. Define the polynomials $s':=| x  |^{2e'}_{m'}\cdot (t'\circ\theta')(x)\in\Q[x]$, $s'_i:=| x  |^{2e'}_{\R^{m'}} (t'_i\circ\theta')(x)\in\Q[x]$, the algebraic sets $V':=\mathcal{Z}_\R(s')$ and $V_i:=\mathcal{Z}_\R(s'_i)$, for every $i\in\{1,\dots,\ell\}$. By construction,
\begin{align*}
V'=\theta'(\widetilde{V}'\setminus\{0\})\cup\{0\}\quad\text{and}\quad V'_i=\theta'(\widetilde{V}'_i\setminus\{0\})\cup\{0\},
\end{align*}
for every $i\in\{1,\dots,\ell\}$, and $\theta':\widetilde{V}'\setminus\{0\}\to V'\cup\{0\}$ is a $\Q$-biregular map between Zariski open subsets of $\Q$-algebraic sets. Moreover, $\theta'(\widetilde{V}'_i\setminus\{0\})=V'_i\setminus\{0\}$ for every $i\in\{1,\dots,\ell\}$. Observe that, by construction, the $\Q$-nonsingular $\Q$-algebraic sets $\{V'_i\}_{i=1}^\ell$ are in general position. Let $C\in\Q\setminus\{0\}$ and define the $\Q$-algebraic sets
\begin{align*}
V''&:=\Big\{(x,y)\in\R^{m'}\times\R\,\Big|\,y\sum_{k=1}^{m'} x_k^2=C,\, s'(x)=0\Big\},\\
V''_i&:=\Big\{(x,y)\in\R^{m'}\times\R\,\Big|\,y\sum_{k=1}^{m'} x_k^2=C,\, s'_i(x)=0\Big\}
\end{align*}
for every $i\in\{1,\dots,\ell\}$. By construction, $V''$ and $V''_i$ are $\Q$-nonsingular $\Q$-algebraic sets, for every $i\in\{1,\dots,\ell\}$, $V'\setminus\{0\}$ and $V''$ are $\Q$-biregularly isomorphic via projection $\pi:\R^{m'}\times\R\to\R^{m'}$, $\pi|_{V''_i}:V''_i\to V'_i\setminus\{0\}$, for every $i\in\{1,\dots,\ell\}$, and the $\Q$-nonsingular $\Q$-algebraic sets $\{V''_i\}_{i=1}^\ell$ are in general position.

Define the Nash diffeomorphism $h:V\to V''$ as
\[
h:=(\pi|_{V''})^{-1}\circ\theta'|_{\widetilde{V}'\setminus\{\overline{0}\}}\circ\widetilde{h}|_{\widetilde{V}\setminus\{\overline{0}\}}\circ\theta|_{V}.
\]
Let $m:=m'+1=2d+n+3$ and fix ``$V'$'':= $V''$ and ``$V'_i$'':= $V''_i$, for every $i\in\{1,\dots,\ell\}$. If we fix $C\in\Q\setminus\{0\}$ be sufficiently small, by $(\mr{iv})$, $(\mr{v})$ and the choice of $\widetilde{h}$ as above, we deduce that $h|_{V_i}:V_i \to V''_i$ is a Nash diffeomorphism, $\jmath\circ h$ is $\mscr{C}^\infty_\w$ close to the inclusion $\iota:V\hookrightarrow \R^{m}$ and $\jmath|_{V'_i}\circ h|_{V_i}$ is $\mscr{C}^\infty_\w$ close to the inclusion map $\iota|_{V_i}:V_i\hookrightarrow \R^{m}$, where $\iota:V\hookrightarrow \R^m$ and $\jmath:V'\hookrightarrow\R^{m}$ denote the inclusion maps. Moreover, an application of \cite{Jel08} provides a semialgebraic homeomorphism $\R^m\to\R^m$ extending $h$, as desired.
\end{proof}

\begin{remark}
In the statement of Theorem \ref{thm:non-compact} we can add the following requirement: `` $V'\subset\R^{m}$ contains a hypersurface of rational points, that is, $\dim(\Zcl_{\R^m}(V'(\Q)))\geq d-1$".

By Remark \ref{rmk:nonsing-rat-pts} we may suppose that $X'\subset\R^{2d+1}$ in the proof of Theorem \ref{thm:non-compact} is such that $\dim(\Zcl_{\R^{2d+1}}(X'(\Q)))\geq n-1$. In addition, since $\Q$-biregular maps send rational points to rational points, as $f|_{X'\setminus(\bigcup_{j\in J} E'_j)}:X'\setminus(\bigcup_{j\in J} E'_j)\to \widetilde{V'}\setminus\{0\}$ and $\theta':\R^m\setminus\{0\}\to \R^m\setminus\{0\}$ are, we get that $\dim(\Zcl_{\R^{2d+1}}(X'(\Q)\setminus(\bigcup_{j\in J} E'_j)))\geq d-1$ and $(\theta'\circ f) (X'(\Q)\setminus(\bigcup_{j\in J} E'_j))=V'(\Q)$, hence, being both $f$ and $\theta'$ biregular, $\dim(\Zcl_{\R^m}(V'(\Q)))\geq d-1$, as desired. 
\end{remark}

\vspace{0.5em}

\begin{remark}
If we are willing to loose the approximation properties in the statement of Theorem \ref{thm:non-compact}, we can find a $\Q$-nonsingular $\Q$-algebraic model $V'$ of $V$ with $\Q$-nonsingular $\Q$-algebraic subsets $V'_i$ with $i=1,\dots,\ell$ with an improvement on the estimate of $m$, namely, we can choose $m=2d+4$. Indeed, it suffices to substitute ``$Y$'':= $\{0\}\subset\R^n$ with ``$Y$'':= $\{0\}\subset\R$ in the application of \cite[Lemma\,3.14]{GSa}.  
\end{remark}

\section*{Acknowledgments}

The author would like to thank Riccardo Ghiloni and Adam Parusi\'{n}ski for valuable discussions during the drafting process and for their comments, especially concerning the \textsc{$\Q$-algebraicity problem} and the resolution of singularities in characteristic $0$.

\printbibliography

\end{document}